\documentclass[a4paper,11pt]{article}
\usepackage[a4paper,left=3cm,right=3cm,top=3.5cm,bottom=3.5cm]{geometry}
\usepackage{amsmath,amssymb, amsthm, mathrsfs, dsfont}

\makeatletter
\def\@endtheorem{\endtrivlist}
\makeatother

\usepackage{url} 

\usepackage[all]{xy}
\usepackage{verbatim}
\usepackage[shortlabels]{enumitem} 
\usepackage{cite} 
\usepackage[colorlinks,
			linkcolor=reference,
            citecolor=citation,
            urlcolor=e-mail,
            backref]{hyperref}
\usepackage{xcolor}
\definecolor{e-mail}{rgb}{0,.40,.80}
\definecolor{reference}{rgb}{.20,.60,.22}
\definecolor{citation}{rgb}{0,.40,.80}

\usepackage{fancyhdr}
\pagestyle{fancy}

\fancyhead{}
\fancyhead[LO,R]{\bfseries\footnotesize\thepage}
\fancyhead[LE]{\bfseries\footnotesize\rightmark}
\fancyhead[RO]{\bfseries\footnotesize\rightmark}
\chead[]{}
\cfoot[]{}
\setlength{\headheight}{1cm}

\numberwithin{equation}{section}

\date{}
\title{Free $(\mathds{Z}/p)^n$-complexes and $p$-DG modules}
\author{Jeremiah Heller and Marc Stephan}

\newcommand{\Z}{\mathds{Z}}
\newcommand{\N}{\mathds{N}}

\newcommand{\F}{\mathds{F}}

\newcommand{\D}{{\mathcal D}}

\newcommand{\id}{\mathrm{id}}

\newcommand{\lol}{\mathit{ll}}

\DeclareMathOperator{\Hom}{Hom}

\DeclareMathOperator{\im}{im}

\DeclareMathOperator{\rank}{rank}

\newcommand{\NCh}{N\text{-}\mathbf{Ch}}
\newcommand{\pCh}{p\text{-}\mathbf{Ch}}
\newcommand{\Perf}{\mathbf{Perf}}

\newcommand{\pDGMod}{p\text{-}\mathbf{DG}\text{-}{_A}\mathbf{Mod}}

\DeclareMathOperator{\Cone}{Cone}

\newtheoremstyle{slanted}
{} 
{} 
{\slshape} 
{} 
{\bfseries}
{.} 
{.5em} 
{}
\makeatletter
\theoremstyle{slanted}
\newtheorem*{rep@theorem}{\rep@title}
\newcommand{\newreptheorem}[2]{%
\newenvironment{rep#1}[1]{%
 \def\rep@title{#2 \ref{##1}}%
 \begin{rep@theorem}}%
 {\end{rep@theorem}}}
\makeatother

\theoremstyle{definition}
\newtheorem{defn}{Definition}[section]
\newtheorem{rem}[defn]{Remark}
\newtheorem{assumption}[defn]{Assumption}
\newtheorem{ex}[defn]{Example}
\theoremstyle{slanted}
\newtheorem{cor}[defn]{Corollary}
\newtheorem{lem}[defn]{Lemma}
\newtheorem{thm}[defn]{Theorem}
\newtheorem{prop}[defn]{Proposition}
\newtheorem{conj}[defn]{Conjecture}
\newtheorem{question}[defn]{Question}
\newtheorem{problem}[defn]{Problem}

\begin{document}

\maketitle{}

\begin{abstract}
We reformulate the problem of bounding the total rank of the  homology of perfect chain complexes over the group ring $\F_p[G]$ of an elementary abelian $p$-group $G$ in terms of commutative algebra. This extends  results of Carlsson for $p=2$ to all primes.
 As an intermediate step, we construct an embedding of the derived category of perfect chain complexes over $\F_p[G]$ into the derived category of $p$-DG modules over a polynomial ring.

 \paragraph{Key Words.}
   Perfect complex, free $(\Z/p)^n$-action, $p$-DG module, toral rank.

    \paragraph{Mathematics Subject Classification 2010.}
    Primary:
    \href{http://www.ams.org/mathscinet/msc/msc2010.html?t=55Mxx&btn=Current}{55M35},
    \href{http://www.ams.org/mathscinet/msc/msc2010.html?t=13Dxx&btn=Current}{13D09}.
    Secondary:
    \href{http://www.ams.org/mathscinet/msc/msc2010.html?t=13Dxx&btn=Current}{13D02}.
\end{abstract}

\section{Introduction}

Let $p$ be a prime, $G=(\Z/p)^n$  an elementary abelian $p$-group of rank $n$, and  $\F_p[G]$ the corresponding group ring over the field with $p$ elements. The following basic problem has a long and rich history, leading via the study of free $G$-actions on products of spheres back to the topological spherical space form problem.

\begin{problem}\label{problem}
Determine a lower bound for the total rank of the homology of non-acyclic perfect chain complexes of $\F_{p}[G]$-modules. 
\end{problem}
Recall that a perfect chain complex is a bounded complex of finitely generated projectives. Projective and free $\F_p[G]$-modules coincide and so a perfect chain complex of $\F_p[G]$-modules is a bounded complex of finitely generated free modules.

The following was conjectured to be a general answer to Problem \ref{problem}. Recently, Iyengar and Walker \cite{iyengarwalker} constructed counterexamples when $p$ is odd and $n\geq 8$. 
\begin{conj}\label{conjalg} 
Let $C$ be a non-acyclic perfect chain complex of $\F_p[G]$-modules. Then
\[\sum_{i}\dim_{\F_p} H_i(C) \geq 2^n.\]
\end{conj}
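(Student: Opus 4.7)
The plan is to reduce Conjecture \ref{conjalg} to a question in commutative algebra, following the strategy Carlsson used for $p=2$. The idea is that although $\F_p[G]$ is not itself a polynomial ring, its Koszul dual behavior makes perfect complexes over it correspond to suitable modules over a polynomial ring $S = \F_p[y_1,\ldots,y_n]$. For $p=2$ this takes the classical form of an equivalence between perfect complexes of $\F_2[G]$-modules and DG $S$-modules with finite, nilpotently supported homology; under this equivalence the total $\F_2$-rank of $H_*(C)$ is controlled by the total rank of the homology of the corresponding DG $S$-module, and one wins by a Horrocks / New-Intersection-Theorem style lower bound of $2^n$ on such DG modules.

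The first step I would carry out is to set up such a translation uniformly in $p$, using the embedding of $\Ho(\Perf(\F_p[G]))$ into the derived category of $p$-DG modules over $S$ that the abstract promises. Concretely, a perfect chain complex $C$ over $\F_p[G]$ is sent to a $p$-DG module $M$ over $S$; acyclicity of $C$ corresponds to triviality of $M$ in the $p$-DG derived category, and the total dimension $\sum_i \dim_{\F_p} H_i(C)$ should be bounded below by a total-rank invariant of $M$. The second, substantive step is the commutative-algebra core: show that any non-trivial $p$-DG $S$-module arising this way has total $\F_p$-homology rank at least $2^n$. For $p=2$ this reduces to Carlsson's theorem, provable by codimension arguments together with a doubling induction on the $n$ variables of $S$, or equivalently via the BGG correspondence and a support estimate.

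The main obstacle — and the precise point at which the strategy is known to fail — is the third step: for odd $p$, the $p$-DG analog of the total-rank bound over $S$ is simply false. A differential satisfying $d^p = 0$ admits strictly more flexible constructions than an ordinary differential, and the clean ``doubling by tensoring with an extra variable'' that yields $2^n$ in the $p=2$ induction has no honest $p$-DG analog. This is exactly what Iyengar and Walker exploit in their counterexamples for $p$ odd and $n \geq 8$, so one should not expect the plan to go through as stated. The realistic outcome of this approach is therefore a sharp reformulation that isolates the $p$-DG commutative-algebra obstruction — which I take to be the actual content of the paper — rather than a proof of Conjecture \ref{conjalg} in full generality; a positive result in the remaining cases (all $p$ with small $n$, or $p=2$) would have to come from a separate ingredient controlling how restrictive ``coming from a perfect $\F_p[G]$-complex'' is inside the $p$-DG derived category.
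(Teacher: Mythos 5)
You have correctly recognized that the boxed statement is a \emph{conjecture}, not a theorem: the paper gives no proof of it, and indeed emphasizes (citing Iyengar--Walker) that it is \emph{false} for odd $p$ and $n \geq 8$. Your sketch of the Koszul-duality / $p$-DG-module translation, and your diagnosis of exactly where the $p=2$ ``doubling'' argument loses its teeth once $d^p=0$ replaces $d^2=0$, matches the paper's own perspective: its actual contribution is the reformulation in Theorems \ref{thm:firstmain} and \ref{thm:secondmain} (the embedding into $\D(\pDGMod)$ and the reduction to $p$-nilpotent matrices), together with the low-rank cases $n\le 2$ in Corollary \ref{cor:lowdimcases}. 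So your proposal is not a proof of the conjecture — no such proof exists — but it is an accurate account of the paper's strategy and of the obstruction, which is the correct thing to say here.
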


This conjecture arises as an algebraic version of Carlsson's conjecture in equivariant topology: the mod-$p$ homology of any non-empty, finite, free $G$-CW complex has total $\F_p$-dimension at least $2^n$. Indeed, one of the original motivations for phrasing Conjecture \ref{conjalg} was to reduce the topological conjecture to a purely algebraic problem.
Taking cellular chains on a $G$-CW complex links these two conjectures. Although the algebraic conjecture fails in general, 
Carlsson's conjecture remains open.

It bears mentioning that Conjecture \ref{conjalg} is known to hold for small rank $n$, namely for $n\leq 2$ if $p$ is odd and for $n\leq 3$ if $p=2$. For general $n$, the weaker bounds
\[
\sum_{i}\dim_{\F_p} H_i(C) \geq \begin{cases} n+1 &\mbox{if } p\geq 3 \\
2n &\mbox{if } p=2
\end{cases}
\]
hold. The best known bounds for general finite, free $G$-CW complexes in Carlsson's conjecture are the same.

In light of the counterexamples for odd primes and $n\geq 8$ to the bound $2^n$ in Conjecture \ref{conjalg}  and that the known bounds for the total dimension $\dim_{\F_p} H_\bullet(C)$ are stronger for $p=2$, 
the motivating questions of this paper arise: 
\begin{question}
 Can the bound $\sum_{i}\dim_{\F_p} H_i(C) \geq n+1$  for odd $p$ be improved?
Does Conjecture \ref{conjalg} hold for $2<n<8$? 
\end{question}

The results of this paper are a first step towards an improved bound for $p$ odd. 

To set the stage for our results we review what happens when $p=2$. The bounds for $p=2$ have been established by Carlsson in a series of papers \cite{carlsson83,carlsson86,carlsson87}, with an additional algebraic observation from \cite[p.~147, discussion following the proof of Corollary~3.6]{baumgartner}, \cite[Corollary~(1.4.21)]{alldaypuppe}. We distinguish three main steps in Carlsson's argument. 
\begin{enumerate}
\item Conjecture \ref{conjalg} is reformulated as a problem in commutative algebra via a Koszul duality argument. An equivalence is established between the derived categories of perfect chain complexes over $\F_2[G]$ and of free, finitely generated DG modules over the polynomial ring $\F_2[x_1,\ldots, x_n]$ with totally finite dimensional homology, where $G=(\Z/2)^{n}$.
\item The reformulation is realized as a problem of bounding sizes of {\it square-zero} matrices with entries in the polynomial ring $\overline{\F}_2[x_1,\ldots,x_n]$.
\item  Bounds for these reformulations are established.
\end{enumerate}

Our first main result below extends the second step to all primes. It contains a number $\ell_n$ that arises as the lower bound on the size of certain {\it $p$-nilpotent} matrices. More precisely, let $\ell_n$ be the minimum over all positive multiples $\ell$ of $p$ for which there exist integers $c_1,\ldots,c_\ell$ and a $p$-nilpotent $\ell\times \ell$-matrix $D=(f_{ij})$ whose entries are homogeneous polynomials $f_{ij}\in \overline{\F}_p[x_1,\ldots,x_n]$ of degree $c_i+1-c_j$ such that 
\begin{enumerate}[i)]
\item
$D$ is strictly upper triangular,
\item
$(f_{ij}(0))^{p-1}=0$, and
\item
$\rank(f_{ij}(x))= (p-1)\ell/p$ for all $x\in (\overline{\F}_p)^n\setminus\{0\}$.
\end{enumerate} 

\begin{thm}[Theorem \ref{thm:secondmaintext} for $\F_p$]
\label{thm:secondmain} Let $G=(\Z/p)^n$ and $\ell_n$ the integer defined above. Then $\dim_{\F_p} H_\bullet(C) \geq  2\ell_n/p$ for any non-acyclic, perfect chain complex $C$ over $\F_p[G]$.
\end{thm}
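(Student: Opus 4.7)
The plan is to apply the embedding of $\Perf(\F_p[G])$ into the derived category of $p$-DG modules over $A=\F_p[x_1,\dots,x_n]$ (the content of the earlier sections of the paper, generalizing Step~1 of Carlsson's strategy to all primes), extract from $C$ a $p$-nilpotent matrix satisfying conditions (i)--(iii) in the definition of $\ell_n$, and then relate the size of the matrix to $\dim_{\F_p}H_\bullet(C)$.

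Given a non-acyclic perfect complex $C$ over $\F_p[G]$, the embedding produces a free, finitely generated, non-contractible $p$-DG module $M$ over $A$ whose homology is finite-dimensional over $\F_p$. Base changing to $\overline{\F}_p$ and choosing a homogeneous minimal basis $e_1,\dots,e_\ell$ of $M$ of internal degrees $c_1,\dots,c_\ell$ presents the differential as an $\ell\times\ell$ matrix $D=(f_{ij})$ with $f_{ij}\in\overline{\F}_p[x_1,\dots,x_n]$ homogeneous of degree $c_i+1-c_j$, satisfying $D^p=0$. To match the defining conditions of $\ell_n$: (i) strict upper triangularity is obtained by ordering the basis compatibly with the grading and the boundedness of $C$; (ii) minimality of the presentation forces $D(0)^{p-1}=0$ (in the $p=2$ case this collapses to the usual condition $D(0)=0$); (iii) finite-dimensionality of $H_\bullet(M)$ forces $M$ to be acyclic away from the origin, and since a $p$-nilpotent $\ell\times\ell$-matrix has $\rank$ at most $(p-1)\ell/p$, the rank of $D(x)$ must attain this maximum for every $x\neq 0$. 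Minimality in the definition of $\ell_n$ then gives $\ell\geq\ell_n$.

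It remains to show $\dim_{\F_p}H_\bullet(C)\geq 2\ell/p$. Under the embedding the total homology of $C$ is computed as an invariant of the $p$-complex $(M/\mathfrak{m}M,D(0))$. By (ii) every Jordan block of $D(0)$ has size at most $p-1$, and each such block contributes one class to each of the two natural homologies $\ker d/\im d^{p-1}$ and $\ker d^{p-1}/\im d$ of a $p$-complex. Since there must be at least $\ell/(p-1)\geq \ell/p$ such blocks, the combined dimension is at least $2\ell/p$, yielding the bound $\dim_{\F_p}H_\bullet(C)\geq 2\ell/p\geq 2\ell_n/p$.

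The main obstacles I anticipate are the simultaneous implementation of (i) and (ii) from a minimal $p$-DG presentation (strict upper triangularity and controlled nilpotency of $D(0)$ interact nontrivially for odd~$p$, in contrast to $p=2$ where minimality automatically collapses $D(0)$ to zero), and the precise identification under the embedding of $\dim_{\F_p}H_\bullet(C)$ with the sum of the two $p$-complex homologies of $(M/\mathfrak{m}M,D(0))$. The $p=2$ sanity check, where $2\ell_n/p=\ell_n$ and condition (ii) becomes $D(0)=0$, recovers Carlsson's original formulation.
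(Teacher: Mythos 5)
Your overall framework — pass to $M=\beta(\iota C)$, minimize, extract a $p$-nilpotent matrix over $\overline{\F}_p[x_1,\dots,x_n]$ satisfying (i)–(iii), and compare $\ell$ with the homology rank — is the one the paper uses, and your sketches of (ii) via minimality (Proposition \ref{prop:minimalmodel}) and of (iii) via total finiteness of ${}_sH_\bullet(M)$ (Proposition \ref{prop:totallyfinite}) are on target. However there are two genuine gaps, and the second is fatal to the stated bound.

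First, condition (i). You say strict upper triangularity is "obtained by ordering the basis compatibly with the grading and the boundedness of $C$." That is not enough for odd $p$: after minimization $D(0)$ is only $(p-1)$-nilpotent, not zero, so there are genuinely nonzero constant entries linking basis elements of the same internal degree, and an ordering by degree does not eliminate them. To get strict upper triangularity one needs a composition series of $M$ by $p$-DG submodules with successive quotients of the special form $Av$ or $Av\oplus Adv\oplus\dots\oplus Ad^{p-2}v$; this is Theorem \ref{thm:compositionseries}, whose proof uses the spectral sequence of the $I$-adic filtration and the structure of Assumption \ref{ass:1}, which in turn is verified for $\beta(\iota C)$ in Lemma \ref{lem:assumption}. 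You correctly flag this step as a worry, but it is not a technicality one can wave past.

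Second, and more seriously, your final counting argument yields only $\dim_{\F_p}H_\bullet(C)\geq \ell_n/p$, not $2\ell_n/p$. You sum $\dim {}_1H(M\otimes_A k)$ and $\dim {}_{p-1}H(M\otimes_A k)$ and identify this sum with $\dim_{\F_p}H_\bullet(C)$, but by Lemma \ref{lem:samehomology} each of these two dimensions is separately equal to $\dim_{\F_p}H_\bullet(C)$, so the "combined dimension" is $2\dim_{\F_p}H_\bullet(C)$. Your block count (number of Jordan blocks $\geq \ell/(p-1)\geq \ell/p$, each contributing a one-dimensional class to each homology) therefore gives $2\dim_{\F_p}H_\bullet(C)\geq 2\ell/p$, i.e.\ only $\dim_{\F_p}H_\bullet(C)\geq \ell/p$. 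The paper closes this factor-of-two gap by using the Euler characteristic: writing $a=\sum_i\dim H_{2i}(C)$ and $b=\sum_i\dim H_{2i+1}(C)$, the Jordan form of $D(0)$ consists of $a$ blocks of size $p-1$ and $b$ blocks of size $1$, so $\ell=(p-1)a+b$ while $\dim H_\bullet(C)=a+b$ and $\chi(C)=a-b$. When $\chi(C)=0$ one has $a=b$, hence $\ell=pa$ and $\dim H_\bullet(C)=2a=2\ell/p$ exactly. When $\chi(C)\neq 0$, since $|G|=p^n$ divides $\chi(C)$ one gets $\dim H_\bullet(C)\geq |\chi(C)|\geq p^n$, and separately $p^n\geq 2^n\geq 2\ell_n/p$ because the cellular chains on the $n$-torus with its free $(\Z/p)^n$-action produce a matrix of size $2^{n-1}p$. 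Without this dichotomy the bound $2\ell_n/p$ cannot be extracted from a block count alone.
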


When $p=2$, this is a rephrasing of results of Carlsson; it is the second step mentioned above in the argument to establish bounds on $\dim_{\F_2} H_\bullet(C)$. A key element in the proof of Theorem \ref{thm:secondmain} is an extension of the first step above,  the reformulation of the $p=2$ case of Conjecture \ref{conjalg} as  a problem about DG modules over the polynomial ring, to all primes. 
To do this, we need to replace the differential of a chain complex or a DG module (which is a square-zero map) with a $p$-nilpotent map 
$d$. Of course, since $d^2\neq 0$,  we are no longer in the realm of  DG modules. Instead, we are working with objects called $p$-DG modules. 
Associated to a $p$-DG module $M$, are $p-1$ different homology groups: \[_sH_\bullet(M)=\ker(d^s)/\im (d^{p-s}),\] one for each $1\leq s\leq p-1$.
\begin{thm}[Theorem \ref{thm:firstmaintext} for $\F_p$]
\label{thm:firstmain} Let $G=(\Z/p)^n$. Let $A$ denote the polynomial ring $\F_p[x_1,\ldots, x_n]$ graded by $\deg(x_i)$=-1. Let $b_n$ be the minimum of
\[\sum_i \dim_{\F_p} {_1}H_i(M\otimes_A \F_p),
\]
where $M$ ranges over the finitely generated, free $p$-DG $A$-modules with $_sH_\bullet(M\otimes_A \F_p)\neq 0$ and $\dim_{\F_p} {_s}H_\bullet(M)<\infty$ for all $1\leq s \leq p-1$. Then
\[\sum_i \dim_{\F_p} H_i(C)\geq b_n
\]
for all non-acyclic, perfect chain complexes $C$ over $\F_p[G]$.
\end{thm}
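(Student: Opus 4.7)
The plan is to construct, for each non-acyclic perfect chain complex $C$ over $\F_p[G]$, a finitely generated free $p$-DG $A$-module $M = M(C)$ that is admissible in the infimum defining $b_n$ and satisfies
\[
\sum_i \dim_{\F_p} {_1}H_i(M \otimes_A \F_p) \leq \sum_i \dim_{\F_p} H_i(C).
\]
The theorem then follows immediately from the definition of $b_n$ as an infimum.

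The natural candidate is a Koszul-dual construction. Writing $R = \F_p[G] = \F_p[y_1, \ldots, y_n]/(y_i^p)$, I would take $M = C \otimes_{\F_p} A$ as a graded $A$-module, equipped with the degree $-1$ differential
\[
d_M = d_C \otimes 1 + \sum_{i=1}^n y_i \otimes x_i,
\]
where each $y_i$ acts on $C$ through its $R$-module structure. The $p$-nilpotency $d_M^p = 0$ should follow (up to sign conventions appropriate to the $p$-DG setting) from $y_i^p = 0$ together with the Frobenius identity $(a+b)^p = a^p + b^p$ for commuting elements in characteristic $p$. Since $C$ is bounded with finitely generated free terms, $\dim_{\F_p} C < \infty$, whence $M$ is finitely generated and free over $A$.

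Verifying admissibility of $M$ then splits into two steps. Non-vanishing of ${_s}H_\bullet(M \otimes_A \F_p)$ follows from the identification of the reduction with a $p$-DG version of $(C, d_C)$ (the $\sum y_i \otimes x_i$ part vanishes modulo the augmentation ideal of $A$), which is nonzero since $C$ is. Finiteness of ${_s}H_\bullet(M)$ should follow from a Koszul-type acyclicity statement: the $p$-DG complex $R \otimes_{\F_p} A$ is acyclic after inverting any nonzero $x_i$, forcing the ${_s}$-homology of $M$, a finitely generated twisted version of finitely many copies of it, to be torsion supported at the origin and hence finite-dimensional.

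The crux, and the main obstacle, is the dimension bound. For $p = 2$ it is literal: $d_M \otimes_A \F_2 = d_C$ and ${_1}H = \ker d_C / \im d_C = H(C)$, giving equality. For odd $p$ however, the naive reduction yields ${_1}H_\bullet(M \otimes_A \F_p) = \ker d_C$, which is typically strictly larger than $H(C)$ because $d_C^{p-1} = 0$ forces the relevant image term to vanish. Overcoming this discrepancy will require passing to a minimal representative for $M$ in the derived category of $p$-DG $A$-modules, exploiting the embedding of derived categories of perfect $R$-complexes into $p$-DG $A$-modules announced in the abstract. Constructing this minimal model and checking that it achieves the desired inequality is the technical heart of the argument, and is where the full Koszul-duality extension developed alongside this theorem plays its essential role.
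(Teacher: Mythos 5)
Your overall shape is right, and you have correctly and precisely identified the crux: for $p>2$, applying $\beta$ to $C$ directly gives $M\otimes_A\F_p\cong(C,d_C)$ as a $p$-complex, and since $d_C^{p-1}=0$ already, $_1H_\bullet(M\otimes_A\F_p)=\ker d_C$, which is typically strictly larger than $H_\bullet(C)$. But notice this inequality goes the \emph{wrong} way for what you need: you get
\[
\sum_i \dim_{\F_p}{_1}H_i(M\otimes_A\F_p)=\dim_{\F_p}\ker d_C\ \geq\ \dim_{\F_p}H_\bullet(C),
\]
so from $b_n\leq\dim_{\F_p}{_1}H_\bullet(M\otimes_A\F_p)$ you cannot conclude $b_n\leq\dim_{\F_p}H_\bullet(C)$. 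Your proposed repair --- passing to a minimal representative in the derived category of $p$-DG modules --- does not close this gap. The minimal model $\widetilde M$ from Proposition~\ref{prop:minimalmodel} is a chain homotopy equivalent quotient of $M$ obtained by killing length-$p$ Jordan blocks in $M\otimes_A k$; but in your $M$, the reduction $M\otimes_A k=(C,d_C)$ has a differential of nilpotency order $\leq 2<p$, so it has \emph{no} length-$p$ blocks and is already minimal in that sense. Minimality does not shrink $\ker d_C$ down to $H_\bullet(C)$.

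The missing ingredient in the paper is the functor $\iota$ of section~\ref{sec:functorbeta}: before applying $\beta$, one first converts the chain complex $C$ into a genuine $p$-complex $\iota C$ by inserting $p-2$ identity arrows on each even-degree term. Lemma~\ref{lem:samehomology} then shows that, up to regrading and deleting zero groups, $_sH_\bullet(\iota C)\cong H_\bullet(C)$ for every $s$ --- so $\dim_{\F_p}{_1}H_\bullet(\iota C)=\dim_{\F_p}H_\bullet(C)$ exactly. Taking $M=\beta(\iota C)$ and observing $\beta(\iota C)\otimes_A\F_p\cong\iota C$, the paper obtains an \emph{equality} $\dim_{\F_p}{_1}H_\bullet(M\otimes_A\F_p)=\dim_{\F_p}H_\bullet(C)$, while your remaining admissibility checks (freeness, finite generation, non-acyclicity of $M\otimes_A\F_p$, and finiteness of $_sH_\bullet(M)$ via the $p$-Koszul computation of Proposition~\ref{prop:NKoszulfinitelygen}) go through essentially as you sketched. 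The derived-category embedding (Theorem~\ref{thm:embedding}) is a consequence of these constructions, not an ingredient in this proof.
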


Theorem \ref{thm:firstmain} is based on a construction of independent interest. For $G=(\Z/p)^{n}$, we provide an embedding of the derived category of perfect chain complexes over $\F_p[G]$ to the derived category of free, finitely generated $p$-DG modules over the polynomial ring in $n$ variables $A$ with totally finite dimensional homology (see section \ref{sec:functorbeta} and Theorem \ref{thm:embedding}). 
\begin{thm}
	There is an embedding of derived categories
	\[\D(\Perf(\F_p[G]))\to \D(\pDGMod).\]
\end{thm}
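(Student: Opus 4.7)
The plan is to realize the embedding through a Koszul-type duality between the truncated polynomial ring $k[G]=k[y_1,\dots,y_n]/(y_i^p)$ (with $y_i=\sigma_i-1$ for generators $\sigma_i$ of $G$) and the polynomial ring $A=k[x_1,\dots,x_n]$. The bridge is the bimodule $B=A\otimes_k k[G]$ equipped with the $A$-linear, right $k[G]$-linear endomorphism
\[d=\sum_{i=1}^n x_i\otimes y_i\cdot(-).\]
In characteristic $p$ one checks directly that $d^p=0$: the $y_i$ pairwise commute and satisfy $y_i^p=0$, so the multinomial expansion of $d^p$ collapses, diagonal terms vanish because $y_i^p=0$, and the remaining multinomial coefficients $\binom{p}{k_1,\dots,k_n}$ with $0<k_i<p$ are divisible by $p$. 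Hence $B$ is an $(A,k[G])$-bimodule that carries a $p$-DG structure on the $A$-side.

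For $C\in\Perf(k[G])$, define $\beta(C)=C\otimes_{k[G]}B\cong A\otimes_k C$, with the graded $A$-module structure from $A$ and the $p$-nilpotent operator assembled from $d$ and the chain differential $\partial_C$. The grading convention $\deg(x_i)=-1$ is tailored so the two operators combine into a single degree-$(-1)$ endomorphism of $\beta(C)$; in characteristic $p$ the identity $(\partial_C\otimes 1 + d)^p=0$ follows because $\partial_C^k=0$ for $k\geq 2$ kills most terms in the binomial expansion, the cross term $p\cdot\partial_C d^{p-1}$ vanishes by characteristic, and $d^p=0$. The result is a finitely generated free $p$-DG $A$-module. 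Since the construction is functorial and takes quasi-isomorphisms of perfect complexes to quasi-isomorphisms of $p$-DG modules (the latter detected by the homologies $_sH_\bullet$ for $1\leq s\leq p-1$), $\beta$ descends to a triangulated functor $\D(\Perf(k[G]))\to\D(\pDGMod)$.

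Fully faithfulness is established by reduction to the compact generator $k[G]$ of $\D(\Perf(k[G]))$. The key Koszul-duality input is that $\beta(k[G])=A\otimes_k k[G]$ serves as a $p$-DG resolution of the trivial $p$-DG module $k$, which allows one to compute derived $p$-DG morphisms through it and obtain the isomorphism
\[\Hom_{k[G]}(k[G],k[G])\xrightarrow{\sim}R\Hom_A^{p\text{-DG}}(\beta(k[G]),\beta(k[G]))\simeq k[G].\]
The full subcategory of $\D(\Perf(k[G]))$ on which $\beta$ induces isomorphisms on $\Hom$-groups is thick and contains $k[G]$, so it is the whole category.

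The main obstacle is the Koszul-duality computation in the $p$-DG framework. For $p=2$ this is classical Koszul duality for exterior algebras and follows from a standard bar-construction filtration. For general $p$ one must fix a workable model of $R\Hom$ between $p$-DG modules and verify the correct vanishing of $\beta(k[G])$ with respect to all $p-1$ homology theories $_sH_\bullet$ simultaneously, which is more delicate than the single homology in the DG case. A secondary technical point is rigorously confirming that the combined operator $\partial_C\otimes 1 + d$ is $p$-nilpotent for every perfect $C$; this ultimately rests on the characteristic-$p$ cancellation of middle multinomial terms and the existence of compatible signs making $\partial_C\otimes 1$ and $d$ commute on the nose after regrading.
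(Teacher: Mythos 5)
There is a genuine gap in your construction: you apply $\beta$ \emph{directly} to a chain complex $C$, forming $C\otimes_k A$ with combined operator $\partial_C\otimes 1 + \sum_i y_i\otimes x_i$. Your verification that this operator is $p$-nilpotent is correct (the two summands commute and each is $p$-nilpotent, so $(a+b)^p=a^p+b^p=0$ in characteristic $p$). However, the resulting functor does \emph{not} preserve quasi-isomorphisms, and so does not descend to derived categories. The point is that a quasi-isomorphism of perfect $k[G]$-complexes is a chain homotopy equivalence, but when a chain complex is viewed as a $p$-complex (since $d^2=0$ implies $d^p=0$), a chain homotopy is not a $p$-homotopy and a contractible chain complex is not acyclic as a $p$-complex. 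Concretely, take $C=(k[G]\xrightarrow{=}k[G])$ in degrees $1,0$: then ${_1}H_0(C)=\ker d_0/\operatorname{im}d^{\,p-1}=k[G]/0\neq 0$ for $p>2$, since $d^{p-1}=0$ already. Hence $\beta(C)\otimes_A k\cong C$ is not acyclic as a $p$-complex, so $\beta(C)$ is not acyclic as a $p$-DG module, even though $0\to C$ is a quasi-isomorphism in $\Perf(k[G])$. The paper fixes this with the intermediate functor $\iota$, which replaces each module in even chain degree by a run of $p-1$ copies linked by identities before applying $\beta$; this is precisely what makes the $p$-homologies $_sH_\bullet$ of $\iota C$ recover $H_\bullet(C)$ (Lemma \ref{lem:samehomology}) and makes chain homotopy equivalences go to $p$-homotopy equivalences. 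Without $\iota$ no functor on derived categories exists, and the full-faithfulness discussion is moot.

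A secondary problem is your Koszul-duality claim that $\beta(k[G])=A\otimes_k k[G]$ is ``a $p$-DG resolution of the trivial $p$-DG module $k$.'' This fails for $p>2$: the paper's Example after Definition \ref{def:koszulncomplex} shows that already for $n=1$ the $N$-Koszul complex $K^N_\bullet(x)$ is not quasi-isomorphic to $k$ in degree $0$. Its homology $_sH_\bullet$ is only \emph{totally finite dimensional} (Proposition \ref{prop:NKoszulfinitelygen}), not concentrated in a single degree. This is exactly why the paper does not prove an equivalence of derived categories for $p>2$ but only an embedding, and why the full-faithfulness is proved through a different route: one shows $\beta\iota C$ admits a composition series, hence satisfies Qi's property~(P), so by \cite[Corollary~6.10]{qi} derived hom-sets from $\beta\iota C$ agree with homotopy-category hom-sets; faithfulness then comes from exhibiting explicitly that $\iota$ reflects null-homotopies. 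Your compact-generator strategy could in principle be made to work, but it would still require establishing the hom-set identification through some analogue of property (P) rather than through a literal Koszul resolution, which does not exist.
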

The embedding to free, finitely generated $p$-DG modules with totally finite dimensional homology extends Carlsson's Koszul duality construction from $p=2$ to all primes, but it is not an equivalence for $p>2$. 

To pass from $p$-DG modules to our main result, Theorem \ref{thm:secondmain}, we pick a basis for a free, finitely generated $p$-DG module $M$ and express its $p$-differential by a $p$-nilpotent matrix. We extend three additional results from $p=2$ to all primes to specialize to the matrices satisfying the three conditions in Theorem \ref{thm:secondmain}.

In Corollary \ref{cor:lowdimcases} of our main result, we recover immediately the known low-dimensional cases of Conjecture \ref{conjalg} for odd $p$. In future work, we intend to apply Theorem \ref{thm:secondmain} to investigate Conjecture \ref{conjalg} when $n=3$, the first open case for odd $p$.

\subsection*{Related work}
Let $C$ be a non-acyclic, perfect chain complex over $\F_p[G]$ for $G=(\Z/p)^n$. The known bound $n+1$ for the total rank of its homology $H_\bullet(C)$ was obtained by bounding the sum of Loewy lengths 
\begin{equation}\label{eq:loewylength}
\sum_i \lol_{\F_p[G]} H_i(C) \geq n+1
\end{equation}
together with the trivial observation $\dim_{\F_p} H_i(C)\geq \lol_{\F_p[G]} H_i(C)$. This is due to Carlsson for $p=2$, and to Allday, Baumgartner, Puppe  (see \cite[(1.4.14) Theorem]{alldaypuppe}) for odd $p$ when $C$ is the cellular cochain complex of a finite, free $G$-CW complex. In \cite{avramovbuchweitziyengarmiller}, Avramov, Buchweitz, Iyengar and Miller established the bound \eqref{eq:loewylength} for any $C$ and independently of the parity of $p$. They introduced a functor that sends $C$ to a DG module over a polynomial ring and established the bound \eqref{eq:loewylength} using levels of corresponding triangulated categories. Note that from the Loewy length alone it is not possible to improve the bound $n+1$ for the total rank of $H_\bullet(C)$.

For $p=2$, Carlsson bounded the total rank of $H_\bullet(C)$ after reformulating to DG modules over a polynomial ring. In \cite{avramovbuchweitziyengar}, Avramov, Buchweitz, and Iyengar established rank inequalities for differential modules over more general rings than the polynomial ring, subsuming Carlsson's bounds. It would be interesting to establish rank inequalities for $p$-differential modules.

Adem and Swan established Conjecture \ref{conjalg} for perfect chain complexes concentrated in two degrees in \cite[Corollary~2.1]{ademswan}.

Lastly, we mention that the functor $\beta$ defined in section \ref{sec:functorbeta} is related to constructions of Friedlander-Pevtsova \cite{Friedlander-Pevtsova} and Benson-Pevtsova \cite{Benson-Pevtsova}, associating 
 vector bundles 
 on $\mathds{P}^{n-1}$ to $\F_p[G]$-modules of constant Jordan type. If $M$ is a finitely generated $\F_p[G]$-module viewed as a $p$-complex concentrated in degree zero, then the differential $d$ of the $p$-differential graded $A$-module $\beta(M)$ identifies with  the map $\Theta_M$  used in the definition of the functors $\mathcal{F}_i$ from $\F_p[G]$-modules of constant Jordan type to vector bundles on $\mathds{P}^{n-1}$ in  \cite[Section 2]{Benson-Pevtsova}, see also \cite[Section 8.4]{BensonBook}.
More precisely, regrade $\beta(M)$ as a free, graded module over the polynomial ring $\mathbb{F}_p[x_1,\ldots, x_n]$ with its usual grading, $\deg(x_i)=1$, so that the $p$-differential $d$ of $\beta(M)$ raises the degree. With these grading conventions, the associated quasi-coherent module on $\mathds{P}^{n-1}$ is just $\widetilde{M}:=M\otimes_{\F_p}\mathcal{O}_{\mathds{P}^{n-1}}$ and the map $\widetilde{M}\to \widetilde{M}(1)$ associated to $d$ is exactly $\Theta_M$.

\subsection*{Outline}
In the same way that a DG module has an underlying chain complex, a $p$-DG module has an underlying $p$-complex. In sections \ref{sec:Ncomplexes} and \ref{sec:rootofunity}, we primarily recall the basics of $p$-complexes. Moreover, we show in Proposition \ref{prop:tensorpreserves} that tensoring with a $p$-complex preserves homotopies. In section \ref{sec:koszul}, we extend the notion of Koszul complex to a $p$-complex. We use this extension to connect perfect chain complexes over $\F_p[G]$ to $p$-DG modules with totally finite dimensional homology and to prove Theorem \ref{thm:firstmain} in section \ref{sec:chaincomplexestopDGmodules}. In section \ref{sec:matrices}, we establish three results, Proposition \ref{prop:minimalmodel}, Proposition \ref{prop:totallyfinite} and Theorem \ref{thm:compositionseries}, corresponding to the three conditions of the matrices in Theorem \ref{thm:secondmain}. We then prove our main result, Theorem \ref{thm:secondmain}, and recover the known low-dimensional cases of Conjecture \ref{conjalg} for odd $p$ from it. In section \ref{sec:embedding}, we prove that the construction from section \ref{sec:functorbeta} connecting perfect chain complexes over $\F_p[G]$ to $p$-DG modules induces an embedding on derived categories.

\subsection*{Acknowledgments}
It is our pleasure to thank Madhav Nori, Henrik R\"uping and Omar Antol\'in Camarena for helpful discussions. We would also like to thank two referees for useful comments and corrections.

Marc Stephan learned about the reformulation of Conjecture \ref{conjalg} for $p=2$ and the problem of examining the situation for odd primes in a talk of Gunnar Carlsson at the Algebraic Topology Open Problems/Work in Progress Seminar at MSRI in 2014.

Jeremiah Heller was supported by NSF Grant DMS-1710966. Marc Stephan was supported by SNSF grant 158932.

\section{$N$-complexes}\label{sec:Ncomplexes}
We will work with $p$-complexes instead of chain complexes. Instead of having a differential $d$ such that $d^2=0$, a $p$-complex is equipped with a map $d$ such that $d^p=0$. They have been introduced by Mayer in \cite{mayer}, who used them to define an alternative to the usual homology groups associated to a simplicial complex. However, as shown by Spanier \cite{spanier}, Mayer's new homology groups are expressible in terms of singular homology. As a result, it seems interest in $p$-complexes waned and they were forgotten about for the next fifty years, reappearing in work of Kapranov \cite{kapranov} and more recently in work of Khovanov and Qi \cite{khovanov, khovanovqi} on the categorification of small quantum groups. Despite Spanier's result, the derived categories of chain complexes and of $p$-complexes are different, as can be seen for example from Khovanov's computation of $K_0$ of the derived category of $p$-complexes \cite[Proposition 5]{khovanov}.
 
 We will be interested in the case when $p$ is a prime, but the general theory works for any integer $N\geq 2$. We recall the necessary basic definitions and properties following \cite{kapranov,kasselwambst,dubois-violettekerner}. We point readers interested in structural properties to \cite{iyamakatomiyachi} in which the homotopy category and derived category of $N$-complexes are equipped with triangulated structures.

Fix an integer $N\geq 2$ and a (unitary) ring $R$.

\begin{defn}
An \emph{$N$-complex} of $R$-modules is a $\mathbb{Z}$-graded $R$-module $C$ together with a homomorphism $d\colon C\to C$ of degree $-1$ such that $d^N=0$. We call $d$ the ($N$-)differential of $C$.

A \emph{morphism} $C\to C'$ of $N$-complexes is a homomorphism of degree $0$ that commutes with the differentials. We denote the category of $N$-complexes by $\NCh(R)$.

Two morphisms $f,g\colon (C,d)\to (C',d')$ are \emph{homotopic} if there exists a homomorphism $h\colon C\to C'$ of degree $N-1$ such that
\[
f-g= \sum_{i=0}^{N-1} (d')^{N-1-i}h d^i.
\]
\end{defn}

Note that a $2$-complex is just a chain complex and for $N=2$ the definitions specialize to the terminology used for chain complexes.

\begin{defn}
For $1\leq s \leq N-1$, the homology functor $_s H_\bullet$ from the category of $N$-complexes $\NCh(R)$ to the category of $\Z$-graded modules is defined by
\[
_s H_n(C)=\frac{\ker(d^s\colon C_n\to C_{n-s})}{\im(d^{N-s}\colon C_{n+N-s}\to C_n)}
\]
for an $N$-complex $C$ and integer $n$.

A morphism $f$ of $N$-complexes is called a \emph{quasi-isomorphism} if ${_s}H_\bullet(f)$ is an isomorphism for all $1\leq s\leq N-1$.
\end{defn}

As expected, homotopy equivalent $N$-complexes have isomorphic homology groups.
\begin{lem}[{\cite[Lemme 1.3]{kasselwambst}}]
Homotopic maps between $N$-complexes induce the same map on homology.
\end{lem}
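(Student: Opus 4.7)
The plan is to verify directly from the definitions that if $f - g = \sum_{i=0}^{N-1} (d')^{N-1-i} h d^i$ for some degree $N-1$ homomorphism $h$, then $f$ and $g$ agree on each homology group $_sH_\bullet$ for $1 \leq s \leq N-1$.

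Fix $s$ with $1 \leq s \leq N-1$ and let $x \in C_n$ be an $s$-cycle, i.e.\ $d^s(x) = 0$. The goal is to show that $(f-g)(x)$ lies in $\im((d')^{N-s}\colon C'_{n+N-s} \to C'_n)$, so that $f$ and $g$ induce the same map on $_sH_n$. Applying the homotopy formula gives
\[
(f-g)(x) = \sum_{i=0}^{N-1} (d')^{N-1-i} h\, d^i(x).
\]
Split this sum at $i = s$. For $i \geq s$, write $d^i(x) = d^{i-s}(d^s(x)) = 0$, so these terms vanish. For $i < s$, the inequality $N-1-i \geq N-s$ holds, so $(d')^{N-1-i} = (d')^{N-s}\,(d')^{s-1-i}$, and each such term lies in $\im((d')^{N-s})$.

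Combining the two ranges shows $(f-g)(x) \in \im((d')^{N-s})$, which is exactly what is needed for the induced maps on $_sH_n$ to coincide. Since $s$ was arbitrary in the range $1 \leq s \leq N-1$, this proves that $f$ and $g$ induce equal maps on every $s$-homology.

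There is no serious obstacle here; the argument is a one-line splitting of the homotopy sum, and the only thing to be careful about is the indexing (i.e.\ that the threshold $i = s$ correctly separates the terms killed by $d^s(x) = 0$ from those that factor through $(d')^{N-s}$). This is the straightforward $N$-complex analogue of the classical chain-complex fact (the case $N=2$, $s=1$).
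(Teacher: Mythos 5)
Your argument is correct and is the standard one; the paper simply cites Kassel--Wambst without reproducing it. Splitting the homotopy sum at $i=s$ is exactly right: for $i\geq s$ the term vanishes because $d^i x = d^{i-s}(d^s x)=0$, and for $i<s$ one has $N-1-i\geq N-s$ so the term factors as $(d')^{N-s}(d')^{s-1-i}h d^i x \in \im((d')^{N-s})$, giving $(f-g)(x)\in\im((d')^{N-s})$ for every $s$-cycle $x$.
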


\begin{defn}
An $N$-complex $C$ is \emph{acyclic} if $_sH_\bullet(C)=0$ for all $1\leq s\leq N-1$.
\end{defn}

Kapranov proved that it is enough to check acyclicity for one $s$.

\begin{prop}[{\cite[Proposition 1.5]{kapranov}}]\label{prop:acyclic}
An $N$-complex $C$ is acyclic if and only if $_sH_\bullet(C)=0$ for some $1\leq s\leq N-1$.
\end{prop}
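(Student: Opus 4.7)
The only-if direction is immediate from the definition of acyclicity, so the task is to propagate the vanishing of $_sH_\bullet(C)$ at a single $s$ to every index $t\in\{1,\ldots,N-1\}$. Throughout, the inclusion $\im(d^{N-t})\subseteq\ker(d^t)$ holds for free because $d^N=0$, so the content in each case is the reverse inclusion. My plan is to split the argument into an upward propagation step that takes us from the given $s$ all the way to $s=N-1$, and then a downward induction that uses the strong identity $\ker(d^{N-1})=\im(d)$ to recover every other index.

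\emph{Upward step.} I would prove the implication $\ker(d^s)=\im(d^{N-s})\ \Rightarrow\ \ker(d^{s+1})=\im(d^{N-s-1})$ for $s\le N-2$. Given $x\in\ker(d^{s+1})$, observe $d(x)\in\ker(d^s)=\im(d^{N-s})$, so write $d(x)=d^{N-s}(y)$. Then $x-d^{N-s-1}(y)\in\ker(d)\subseteq\ker(d^s)=\im(d^{N-s})$, and writing $x-d^{N-s-1}(y)=d^{N-s}(z)$ gives $x=d^{N-s-1}(y+d(z))\in\im(d^{N-s-1})$. Iterating this from the original $s$ up to $N-1$ yields $_{N-1}H_\bullet(C)=0$, i.e.\ $\ker(d^{N-1})=\im(d)$.

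\emph{Downward step.} With $\ker(d^{N-1})=\im(d)$ in hand, I would descend from $t=N-1$ down to $t=1$ by induction. Assuming $\ker(d^{t+1})=\im(d^{N-t-1})$, take $x\in\ker(d^t)\subseteq\ker(d^{t+1})=\im(d^{N-t-1})$ and write $x=d^{N-t-1}(y)$. Then $d^{N-1}(y)=d^t(x)=0$, so $y\in\ker(d^{N-1})=\im(d)$; writing $y=d(z)$ gives $x=d^{N-t}(z)\in\im(d^{N-t})$, as needed. Combining both phases shows $\ker(d^t)=\im(d^{N-t})$ for every $1\le t\le N-1$, so $C$ is acyclic.

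There is no real technical obstacle beyond the observation that one has to reach the extreme index $N-1$ first; a naive attempt to go directly from $s$ down to $s-1$ stalls because one obtains an element of $\ker(d^{N-1})$ that must be shown to lie in $\im(d)$, which is precisely the statement $_{N-1}H_\bullet(C)=0$. Splitting the argument into ``up first, then down'' neatly circumvents this, and each individual step is a short diagram chase relying only on $d^N=0$ and the current inductive hypothesis.
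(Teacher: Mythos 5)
Your argument is correct. The paper itself does not supply a proof here but simply cites Kapranov, so there is no in-text argument to compare against; your self-contained two-phase diagram chase (first climb from the given $s$ up to $N-1$ to obtain $\ker(d^{N-1})=\im(d)$, then use that extreme identity to descend to every $t$) is a clean and complete elementary proof, in the same spirit as the standard one.

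To confirm the details: in the upward step, $x-d^{N-s-1}(y)\in\ker(d)\subseteq\ker(d^s)$ uses only $s\ge 1$, and the resulting identity $x=d^{N-s-1}(y+d(z))$ is formed in the correct degree. In the downward step, $\ker(d^t)\subseteq\ker(d^{t+1})$ uses $t\le N-2$, and $d^{N-1}(y)=d^t(x)=0$ feeds correctly into $\ker(d^{N-1})=\im(d)$. Your closing remark about why a naive single-direction induction stalls (one lands in $\ker(d^{N-1})$ without knowing it equals $\im(d)$) correctly identifies the key obstruction and justifies the ``up first, then down'' decomposition.
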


Since a short exact sequence of chain complexes induces a long exact sequence in homology, we obtain the following result.
\begin{lem}[{\cite[Lemme 1.8]{kasselwambst}}]
\label{lem:longexact}
Let $1\leq s\leq N-1$. A short exact sequence
\[0\to C'\to C\to C''\to 0
\]
of $N$-complexes induces a long exact sequence
\[\xymatrix{_sH_\bullet(C')\ar[r] & _sH_\bullet(C)\ar[r] & _sH_\bullet(C'')\ar[d] \\
_{N-s}H_\bullet(C'')\ar[u] & _{N-s}H_\bullet(C)\ar[l] & _{N-s}H_\bullet(C'),\ar[l]}
\]
where the upward pointing arrow is of degree $-(N-s)$, the downward pointing arrow is of degree $-s$ and the horizontal arrows are induced by the maps in the short exact sequence.
\end{lem}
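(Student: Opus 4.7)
The plan is to reduce the statement to the classical long exact sequence in homology for short exact sequences of ordinary chain complexes. The key observation is that for any $N$-complex $(C,d)$ and any $1\leq s\leq N-1$, both composites $d^s\circ d^{N-s}$ and $d^{N-s}\circ d^s$ equal $d^N=0$. Hence the pair $(d^s,d^{N-s})$ endows the underlying graded module of $C$ with the structure of an ordinary $2$-periodic chain complex, whose differentials alternate between a degree $-s$ map and a degree $-(N-s)$ map. At a position $C_n$ from which the outgoing differential is $d^s$ the homology is $\ker(d^s\colon C_n\to C_{n-s})/\im(d^{N-s}\colon C_{n+N-s}\to C_n)={_s}H_n(C)$, while at a position from which the outgoing differential is $d^{N-s}$ it is ${_{N-s}}H_n(C)$.

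A short exact sequence $0\to C'\to C\to C''\to 0$ of $N$-complexes is, in particular, a short exact sequence of these associated $2$-periodic chain complexes. The classical snake-lemma argument then yields a long exact sequence in homology, and because the homology alternates between ${_s}H$ and ${_{N-s}}H$, the resulting sequence is $2$-periodic in shape and wraps up into exactly the hexagonal diagram of the statement. Concretely, I would construct the connecting homomorphism ${_s}H_n(C'')\to {_{N-s}}H_{n-s}(C')$ by the usual chase: given a class represented by $c''\in C''_n$ with $d^s c''=0$, lift to $c\in C_n$, write $d^s c=i(c')$ for some $c'\in C'_{n-s}$, and use injectivity of $i$ together with $i(d^{N-s}c')=d^N c=0$ to conclude $d^{N-s}c'=0$, so that $[c']\in {_{N-s}}H_{n-s}(C')$. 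The symmetric chase produces the other connecting map ${_{N-s}}H_n(C'')\to {_s}H_{n-(N-s)}(C')$.

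The asserted degree shifts of the two connecting maps can then be read off directly: the first shifts internal degree by $-s$ and the second by $-(N-s)$, matching the arrows labelled in the statement. Exactness at each of the six vertices of the hexagon is a routine diagram chase, identical to the proof for ordinary chain complexes except that every invocation of $d\circ d=0$ is replaced by the appropriate instance $d^s d^{N-s}=d^N=0$. There is no substantive obstacle here: once the $2$-periodic chain-complex structure $(C,d^s,d^{N-s})$ is recognized, the remainder is a mechanical transcription of the standard derivation, the only care needed being the bookkeeping of the asymmetric degree shifts contributed by the two alternating differentials.
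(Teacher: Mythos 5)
Your argument is correct and matches the approach the paper signals: the paper cites Kassel--Wambst and prefaces the lemma with precisely the observation that the classical long exact sequence for chain complexes does the work, and your reduction via the pair of square-zero composites $(d^s, d^{N-s})$ giving (a family of) ordinary two-periodic complexes is exactly that reduction, with the connecting morphisms chased as usual. The only cosmetic point is that $(d^s, d^{N-s})$ does not give a single chain complex on the whole graded module $C$ but rather one for each residue pairing $\{m,\, m-s\}$ modulo $N$; collecting these gives all the homology groups ${_s}H_\bullet$ and ${_{N-s}}H_\bullet$, so the conclusion stands.
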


If $N\geq 3$, then the inclusion $i\colon \ker d^s\to \ker d^{s+1}$ for $1\leq s\leq N-2$ induces a natural map $i_*\colon _sH_\bullet(C)\to {_{s+1}H}_\bullet(C)$ of degree $0$, and the differential $d\colon \ker d^s\to \ker d^{s-1}$ for $2\leq s\leq N-1$ induces a natural map
$d_*\colon _sH_\bullet(C)\to {_{s-1}H}_\bullet(C)$ of degree $-1$.
\begin{lem}[{\cite[Lemme 1]{dubois-violettekerner}}]
\label{lem:les2}
For any integers $r,s\geq 1$ with $r+s\leq N-1$, the sequence
\[\xymatrix{_sH_\bullet(C)\ar[r]^{(i_*)^r} & _{s+r}H_\bullet(C)\ar[r]^{(d_*)^s} & _rH_\bullet(C)\ar[d]^{(i_*)^{N-s-r}} \\
_{N-r}H_\bullet(C)\ar[u]^{(d_*)^{N-r-s}} & _{N-s-r}H_\bullet(C)\ar[l]^{(i_*)^s} & _{N-s}H_\bullet(C)\ar[l]^{(d_*)^r}}
\]
is exact.
\end{lem}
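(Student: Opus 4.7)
The plan is to verify exactness at each of the six vertices of the hexagon directly via a diagram chase on cycle representatives. The vertices fall into two types: three where the incoming map is some $(i_*)^{?}$ and the outgoing map is some $(d_*)^{?}$, and three where the roles are reversed. The same chase works for all three vertices of a given type up to relabeling indices, so only two representative chases are genuinely needed.

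The first step is to record that every consecutive composite vanishes: for instance, $(d_*)^s \circ (i_*)^r$ sends the class of $x \in \ker d^s$ to $[d^s x]=0$, and the five other composites are equally immediate, using only $d^N=0$ and that the inclusions $\ker d^a \hookrightarrow \ker d^b$ are compatible with $d$.

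For exactness at a vertex of the first type, represented by ${_{s+r}H}_\bullet(C)$: suppose $y \in \ker d^{s+r}$ satisfies $(d_*)^s[y]=0$, so $d^s y = d^{N-r}z$ for some $z \in C$. Setting $x := y - d^{N-s-r}z$ gives $d^s x = 0$ and $y-x = d^{N-(s+r)}z \in \im d^{N-(s+r)}$, so $[x]\in {_sH}_\bullet(C)$ is the desired preimage under $(i_*)^r$. For exactness at a vertex of the second type, represented by ${_rH}_\bullet(C)$: if $w \in \ker d^r$ satisfies $w = d^s v$ for some $v \in C$, then $d^{s+r}v = d^r w = 0$, so $v \in \ker d^{s+r}$ automatically, and $(d_*)^s[v] = [w]$.

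The remaining four vertices are handled by identical chases after cycling the indices; for example, at ${_{N-s}H}_\bullet(C)$ a witness $z'$ with $d^r y' = d^{s+r}z'$ produces the preimage $w' = y' - d^s z'\in \ker d^r$. The main potential obstacle is not conceptual but purely bookkeeping: I would need to check that under the hypothesis $r+s\leq N-1$ every exponent appearing in the diagram and every intermediate exponent introduced by the chases (such as $N-s-r$, $N-(s+r)$, $N-r-s$) lies in $\{0,1,\ldots,N\}$, so that the corresponding powers of $d$ and the corresponding kernel inclusions are legitimate.
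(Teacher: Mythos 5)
Your diagram chase is correct: the two representative chases you exhibit generalize without incident to all six vertices, and under the hypotheses $r,s\geq 1$ and $r+s\leq N-1$ every exponent $s,\ r,\ s+r,\ N-s,\ N-r,\ N-s-r$ that you use lies in $\{1,\ldots,N-1\}$, so all the kernel inclusions and images are legitimate. For the record, the paper does not prove this lemma but simply cites it to Dubois-Violette and Kerner; your argument is the standard one and supplies a self-contained proof of what the paper treats as a black box.
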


Working over a field $k$, any chain complex is a direct sum of shifts of the contractible chain complex
\[\ldots \rightarrow 0\rightarrow k\stackrel{=}{\rightarrow} k\rightarrow 0 \rightarrow \ldots
\]
and of
\[\ldots \rightarrow 0\rightarrow k\rightarrow 0\rightarrow \ldots .
\]
For $N$-complexes, there are more non-contractible building blocks.
\begin{prop}[{\cite[Proof of Proposition~2]{tikaradze}}]\label{prop:structureNcomplexes} Let $k$ be a field. Any $N$-complex of $k$-vector spaces is a direct sum of shifts of the contractible $N$-complex
\[\ldots \rightarrow 0 \rightarrow k\stackrel{=}{\rightarrow} \ldots \stackrel{=}{\rightarrow} k \rightarrow 0\rightarrow \ldots\]
consisting of $N-1$ identity arrows $\id_k$ and of shifts of the $N$-complexes
\[\ldots \rightarrow 0 \rightarrow k\stackrel{=}{\rightarrow} \ldots \stackrel{=}{\rightarrow} k \rightarrow 0\rightarrow \ldots\]
with $i$ identity arrows $\id_k$, $0\leq i\leq N-2$.
\end{prop}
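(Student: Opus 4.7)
The strategy is to construct a graded Jordan-normal-form decomposition for the nilpotent operator $d$, using the kernel filtration $0 = \ker d^0 \subseteq \ker d \subseteq \cdots \subseteq \ker d^N = C$. The first step is to verify that this filtration is well-behaved: each $\ker d^s$ is a graded submodule of the free module $C$, hence free over the PID $R$; and each subquotient $\ker d^s/\ker d^{s-1}$ is free too, since the map induced by $d^{s-1}$ gives a well-defined graded injection $\ker d^s/\ker d^{s-1} \hookrightarrow C$ whose image is a submodule of a free module over $R$.

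I would then construct graded complements $G_s \subseteq \ker d^s$, for $s = N, N-1, \ldots, 1$, by descending induction, arranging that $\ker d^s = \ker d^{s-1} \oplus G_s$ and that $d(G_{s+1}) \subseteq G_s$ (setting $G_{N+1} := 0$). The key compatibility observation is that $d(G_{s+1}) \cap \ker d^{s-1} = 0$: if $g \in G_{s+1}$ satisfies $d(g) \in \ker d^{s-1}$, then $g \in \ker d^s \cap G_{s+1} = 0$. Hence $d(G_{s+1})$ injects into the free quotient $\ker d^s/\ker d^{s-1}$, and can be extended to a graded complement $G_s$ of $\ker d^{s-1}$ inside $\ker d^s$. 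Splitting each $G_s$ further as $G_s = d(G_{s+1}) \oplus H_s$ by choosing a graded free complement $H_s$ isolates the ``top generators'' of the Jordan blocks of size exactly $s$.

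To finish, each homogeneous basis element $v$ of $H_s$ generates an $N$-subcomplex whose underlying graded module is freely spanned by $v, d(v), \ldots, d^{s-1}(v)$; by construction this subcomplex is isomorphic, up to a shift in degree, to the building block with $s-1$ identity arrows (the case $s = N$ producing the contractible block). Summing over $s$ and over a chosen graded basis of each $H_s$ assembles $C$ as the asserted direct sum, and one verifies this via the graded decomposition $C = G_1 \oplus G_2 \oplus \cdots \oplus G_N$ together with $G_s = d(G_{s+1}) \oplus H_s$ applied iteratively.

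The main obstacle is ensuring the compatibility of the complements at each inductive step, i.e., that the submodules $d(G_{s+1})$ and $\ker d^{s-1}$ are direct summands inside $\ker d^s$ in a way that can be nested consistently under $d$. Over a field all such splittings are automatic, recovering Tikaradze's argument; over a PID one leans on the freeness of the successive quotients $\ker d^s/\ker d^{s-1}$ established at the outset to produce the required splittings as graded $R$-modules.
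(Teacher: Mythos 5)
Your strategy---building a Jordan-type decomposition from the kernel filtration $\ker d^0\subseteq\ker d\subseteq\cdots\subseteq\ker d^N=C$ by choosing nested graded complements---is exactly the argument that the paper's one-line hint (the analogy with normal forms of nilpotent endomorphisms over a PID) is gesturing at, and your opening observations are correct: each $\ker d^s$ is a submodule of a free module over a PID, hence free, and $d^{s-1}$ embeds $\ker d^s/\ker d^{s-1}$ into $C$, so the subquotients are free as well.

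The gap is at the splittings, first where you extend $d(G_{s+1})$ to a graded complement $G_s$ of $\ker d^{s-1}$ inside $\ker d^s$, and then again where you write $G_s=d(G_{s+1})\oplus H_s$. You flag this as ``the main obstacle'' and suggest it is cured by the freeness of the subquotients $\ker d^s/\ker d^{s-1}$, but a free submodule of a free module over a PID need not be a direct summand. Concretely, take $R=\mathbb{Z}$, $N=2$, and the chain complex $C_1=\mathbb{Z}\xrightarrow{\;2\;}C_0=\mathbb{Z}$. The graded structure forces $G_2=C_1$ and $G_1=C_0=\mathbb{Z}$, and then $d(G_2)=2\mathbb{Z}$ has no complement $H_1$ inside $G_1$ because $\mathbb{Z}/2\mathbb{Z}$ is not free. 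This is not a defect of your particular argument: the same example shows that the proposition itself fails over a PID that is not a field, since in any direct sum of shifts of the stated building blocks the image of each differential is a direct summand with free cokernel, whereas multiplication by $2$ on $\mathbb{Z}$ has cokernel $\mathbb{Z}/2\mathbb{Z}$. (The normal form of nilpotent endomorphisms of free modules over a PID that the paper's hint appeals to fails for the same reason, e.g.\ $f(e_1)=2e_2$, $f(e_2)=0$ on $\mathbb{Z}^2$.) Over a field every graded short exact sequence splits and your construction goes through verbatim; that is the only case the paper ever applies the proposition to (always to $M\otimes_A k$) and the case Tikaradze proves, so the correct fix is to restrict the hypothesis to fields rather than PIDs.
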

The proof is similar to establishing the normal form
\[M= \bigoplus_\alpha kv_\alpha \oplus kf(v_\alpha)\oplus \ldots\oplus kf^{i_\alpha}(v_\alpha)
\]
with $v_\alpha\in M$ and $i_\alpha\geq 0$ of a nilpotent endomorphism $f$ of a vector space $M$ over $k$.

\section{Primitive roots of unity and the tensor product for $N$-complexes}\label{sec:rootofunity}
Let $N\geq 2$ and suppose that $R$ is a commutative ring with unit $1\neq 0$. The sign $-1$ that appears in the differential of the tensor product of two chain complexes gets replaced by a primitive $N$th root of unity $q$ in the tensor product of $N$-complexes. Working with $N$-complexes, the combinatorics become more involved and require $q$-analogues of combinatorial identities. For the $p$-complexes we will consider, the ring $R$ will be an algebra over a field of characteristic $p$ and $q=1$ so that the ordinary combinatorial identities suffice.

Here we recall the general definitions and structural properties following \cite{kapranov,dubois-violette,bichon}. In addition, we prove that tensoring with an $N$-complex preserves homotopies (see Proposition \ref{prop:tensorpreserves}). 

\subsection{Primitive root of unity}
Fix an element $q$ of $R$.
\begin{defn}
Let $[\cdot]_q\colon \N\to R$ denote the function $[n]_q=\sum_{k=0}^{n-1} q^k$ with the convention that $[0]_q=0$.

The \emph{$q$-factorial} $[n]_q!$ of $n$ is defined by $[0]_q!=1$ and
\[[n]_q!=\prod_{k=1}^n[k]_q
\]
for integers $n\geq 1$.

For integers $n\geq m\geq 0$, the \emph{$q$-binomial coefficient} $\binom{n}{m}_q\in R$ is defined inductively by
\[\begin{cases}
\binom{n}{0}_q=\binom{n}{n}_q=1 \\
\binom{n+1}{m+1}_q=\binom{n}{m}_q + q^{m+1} \binom{n}{m+1}_q \quad \text{for } n-1\geq m\geq 0.
\end{cases}
\]
\end{defn}

\begin{ex}
If $R=\Z$ and $q=1$, then the map $[\cdot]_q\colon \N\to \Z$ is the inclusion, and the $q$-factorial and the $q$-binomial coefficients agree with the ordinary factorial and binomial coefficients, respectively.
\end{ex}

\begin{defn}
A \emph{distinguished primitive $N$th root of unity of $R$} is an element $q\in R$ such that $[N]_q=0$ and $[n]_q$ is invertible for all $1\leq n\leq N-1$.
\end{defn}

Note that $q^N=1$ as $q^N-1=(q-1)[N]_q=0$ for any distinguished primitive $N$th root of unity $q$. In particular distinguished primitive $N$th roots of unity are invertible.

\begin{ex}
If $R=\mathbb{C}$, then $q\in \mathbb{C}$ is a distinguished primitive $N$th root of unity if and only if $q$ is a primitive $N$th root of unity in the ordinary sense.

If $k$ is a field of characteristic $p>0$ and $N=p$, then $q=1$ is a distinguished primitive $p$th root of unity of the field $k$.

If $R$ is any commutative ring and $N=2$, then $q=-1$ is the unique distinguished primitive $N$th root of unity of $R$.
\end{ex}

The following identity holds by induction.
\begin{lem}\label{lem:binomcoeff} Let $q$ be a distinguished primitive $N$th root of unity of $R$. Then
\[\binom{n}{m}_q = \frac{[n]_q!}{[n-m]_q! [m]_q!}
\]
for all $N\geq n\geq m\geq 0$ with the convention that $\frac{[N]_q!}{[N]_q!}=1$.
\end{lem}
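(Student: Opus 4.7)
The plan is a straightforward induction on $n$, using the defining recurrence
\[\binom{n+1}{m+1}_q = \binom{n}{m}_q + q^{m+1}\binom{n}{m+1}_q\]
together with the key arithmetic identity
\[[m+1]_q + q^{m+1}[n-m]_q = [n+1]_q,\]
which follows immediately from the definition $[k]_q=\sum_{j=0}^{k-1}q^j$ by splitting the sum at index $m+1$. The base case $n=m=0$ reduces to the trivial equality $1=\frac{[0]_q!}{[0]_q![0]_q!}$, and the boundary cases $m=0$ and $m=n$ in the inductive step are handled directly from the defining equalities $\binom{n+1}{0}_q=\binom{n+1}{n+1}_q=1$ matched against the stated convention $\frac{[N]_q!}{[N]_q!}=1$.

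For $0\le m\le n-1$ (so that $1\le m+1\le n$) I would apply the recursion, substitute the inductive hypothesis for both binomial coefficients on the right, clear denominators to the common denominator $[n-m]_q![m+1]_q!$, and invoke the identity above to collapse the numerator to $[n]_q!\cdot [n+1]_q=[n+1]_q!$. When $n+1\le N-1$ all $[k]_q$ involved in the denominators are invertible by definition of a distinguished primitive $N$th root of unity, so this manipulation is entirely formal inside $R$ and the conclusion is immediate.

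The one point requiring a small amount of care is the step from $n=N-1$ to $n=N$, since $[N]_q!$ now contains the factor $[N]_q=0$. I would treat this separately: in the range $1\le m\le N-1$ the denominator $[N-m]_q![m]_q!$ involves only $[k]_q$ with $k<N$, hence remains invertible, and the same computation as above yields $\binom{N}{m}_q=\frac{[N-1]_q!\,[N]_q}{[N-m]_q![m]_q!}=0$; this matches the value forced by the recursion together with $\binom{N-1}{*}_q=\frac{[N-1]_q!}{\cdots}$. For the remaining cases $m=0$ and $m=N$ the formula is rescued precisely by the convention $\frac{[N]_q!}{[N]_q!}=1$.

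I expect no real obstacle: the proof is essentially bookkeeping, with the only substantive ingredient being the $q$-analogue identity $[m+1]_q+q^{m+1}[n-m]_q=[n+1]_q$. The role of the hypothesis that $q$ is a \emph{distinguished} primitive $N$th root of unity enters only in ensuring that every denominator appearing during the induction (all of the form $[k]_q!$ with $k\le N-1$) is invertible, so that the recursion uniquely determines $\binom{n}{m}_q$ and the closed form is literally equal to it rather than merely satisfying the same recursion.
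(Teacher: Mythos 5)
Your proof is correct and follows exactly the route the paper intends: the paper states only that the identity ``holds by induction,'' and your argument is a fully spelled-out version of that induction, using the defining Pascal-type recurrence together with the $q$-identity $[m+1]_q + q^{m+1}[n-m]_q = [n+1]_q$, with the invertibility of $[k]_q$ for $1\le k\le N-1$ justifying the denominator manipulations and the convention handling the boundary case $n=N$.
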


The case $q=1$, $R=\Z$ of the following identity is well-known to combinatorialists. We will need it to show that tensoring with an $N$-complex preserves homotopies.
\begin{lem}\label{lem:qidentity} Let $q$ be a distinguished primitive $N$th root of unity of $R$. For all $0\leq s\leq t<m$ the identity
\[q^{(s+1)t}\sum_{i=s}^{t} \binom{m-1-i}{m-1-t}_q\binom{i}{s}_q q^{-i(s+1)}= \binom{m}{m+s-t}_q
\]
holds.
\end{lem}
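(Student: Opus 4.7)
The plan is to prove the identity as a polynomial identity in $q$ by induction on $m$. I would first absorb the prefactor $q^{(s+1)t}$ into the sum and rewrite the claim in the equivalent form
\[\sum_{i=s}^{t}\binom{m-1-i}{m-1-t}_q\binom{i}{s}_q q^{(s+1)(t-i)} = \binom{m}{m+s-t}_q,\]
in which all exponents of $q$ are non-negative. Since the defining recursion shows each $\binom{n}{m}_q$ lies in $\Z[q]$, the primitive-root hypothesis plays no role in the combinatorial identity, and it suffices to verify it in $\Z[q]$; it then passes to the commutative ring $R$ via the ring homomorphism $\Z[q] \to R$ sending $q$ to $q$.

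The base case is $m = t+1$: then $m-1-t = 0$, so the left hand side reduces to the $q$-hockey-stick identity $\sum_{i=s}^{t}\binom{i}{s}_q q^{(s+1)(t-i)} = \binom{t+1}{s+1}_q$, itself a quick induction on $t$ using Pascal's rule $\binom{t+1}{s+1}_q = \binom{t}{s}_q + q^{s+1}\binom{t}{s+1}_q$.

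For the inductive step $m \to m+1$, I would apply the Pascal recursion
\[\binom{m-i}{m-t}_q = \binom{m-1-i}{m-1-t}_q + q^{m-t}\binom{m-1-i}{m-t}_q\]
term by term in the sum at level $m+1$. This splits the left hand side into two sub-sums. The first is the left hand side of the identity at level $m$ with the same parameters $(s,t)$, contributing $\binom{m}{m+s-t}_q$ by the inductive hypothesis. In the second, the $i = t$ term vanishes (one $q$-binomial acquires a negative lower index), and a shift of the exponent $(s+1)(t-i) = (s+1)\bigl((t-1)-i\bigr) + (s+1)$ brings it to $q^{m-t+s+1}$ times the left hand side at level $m$ with parameters $(s,t-1)$, contributing $q^{m-t+s+1}\binom{m}{m+s-t+1}_q$ by the inductive hypothesis. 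Summing and applying Pascal's rule $\binom{m+1}{k+1}_q = \binom{m}{k}_q + q^{k+1}\binom{m}{k+1}_q$ at $k = m+s-t$ gives $\binom{m+1}{m+1+s-t}_q$, as required.

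The main obstacle is the bookkeeping of $q$-powers: one has to verify that $q^{m-t}$ arising from the splitting recursion combined with the $q^{s+1}$ from shifting the exponent matches exactly the $q^{k+1}$ demanded by the reassembling Pascal rule (they agree since $(m-t)+(s+1) = (m+s-t)+1$). Minor care is also needed at the boundary cases $t = m$, in which Pascal cannot be applied to the top term $i = m$ and must be handled by direct evaluation together with the base-case hockey-stick identity, and $t = s$, in which the second sub-sum is empty and $\binom{m}{m+1}_q = 0$.
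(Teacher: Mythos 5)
Your proposal is correct and follows essentially the same route as the paper, namely double induction on $m$ and $t$ driven by the defining Pascal recursion for the $q$-binomial coefficients. The only cosmetic difference is the choice of base case: the paper anchors at $s=t$ (a single-term sum where both sides equal $1$), while you anchor at $m=t+1$ via the $q$-hockey-stick identity — both are fine, since either way the induction bottoms out correctly once one notes that the second sub-sum vanishes when $s=t$.
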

\begin{proof} If $s=t$, then the identity holds since
\[q^{(s+1)t}q^{-t(s+1)}=1=\binom{m}{m}_q.
\]
In particular, the identity holds for $m=1$. For $m>1$, the identity is established by double induction on $m$ and $t$. 
\end{proof}

\subsection{Tensor product}\label{sec:tensorproduct}
Suppose that $R$ is equipped with a distinguished primitive $N$th root of unity $q$.
\begin{defn}
The \emph{tensor product} $C\otimes D$ of two $N$-complexes $C$, $D$ is the $N$-complex given by the graded $R$-module $\{\oplus_{a+b=n} C_a\otimes D_b\}_{n\in \Z}$ with differential induced by
\[d(x\otimes y)=dx\otimes y + q^{-a} x\otimes dy
\]
for $x\in C_a$ and $y\in D_b$.
\end{defn}
The map $d$ satisfies $d^N=0$ as
\begin{equation}\label{eq:differentialtensor}
d^k(x\otimes y)=\sum_{m=0}^k q^{-a(k-m)} \binom{k}{m}_q d^m(x)\otimes d^{k-m}(y)
\end{equation}
by induction on $k$ and $\binom{N}{m}_q=0$ for $1\leq m\leq N-1$ by Lemma \ref{lem:binomcoeff}.

For any $N$-complex $C$, the functor $-\otimes C$ has a right adjoint $\Hom(C,-)$.

\begin{defn}
The \emph{Hom-complex} $\Hom(C,D)$ of two $N$-complexes $C$, $D$ is the $N$-complex with $\Hom(C,D)_n=\prod_{i\in \Z} \Hom_R(C_i,D_{i+n})$ for $n\in\Z$ and differential $d\colon \Hom(C,D)_n\to \Hom(C,D)_{n-1}$ defined by
\[d((f_i)_{i\in \Z})=(df_i-q^{-n}f_{i-1} d)_{i\in \Z}.
\]
The map $d$ satisfies $d^N=0$ as
\[d^k(f_i)_i= \left(\sum_{m=0}^k (-1)^{k-m}q^{(k-m)(k-m-1)/2}q^{-(k-m)n} \binom{k}{m}_q d^m f_{i-k+m}d^{k-m}\right)_i
\]
by induction on $k$ and since $\binom{N}{m}_q=0$ for $1\leq m\leq N-1$.
\end{defn}

The structural properties of the tensor product and the Hom-complex are summarized in the following result.
\begin{prop}[\cite{kapranov,dubois-violette,bichon}]
The category of $N$-complexes together with the tensor product and the $N$-complex given by $R$ concentrated in degree zero is a monoidal category. When $N\geq 3$, this monoidal structure does not admit a braiding in general. For any $N$-complex $C$, the functor $-\otimes C$ is left adjoint to the functor $\Hom(C,- )$.
\end{prop}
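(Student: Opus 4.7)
The plan is to verify each of the three assertions in turn: the monoidal structure, the internal-hom adjunction, and the absence of a braiding for $N\geq 3$. The first two are direct but bookkeeping-heavy verifications; the last is the only place a genuine obstruction needs to be exhibited.

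For the monoidal structure, I would first observe that $C\otimes D$ is indeed an $N$-complex: the formula for $d^k(x\otimes y)$ given in the text, applied with $k=N$, has vanishing endpoint terms because $d^N=0$ on $C$ and $D$, and vanishing intermediate terms because $\binom{N}{m}_q=0$ for $1\leq m\leq N-1$ by Lemma \ref{lem:binomcoeff}. The associator $(C\otimes D)\otimes E\to C\otimes(D\otimes E)$ and the left and right unitors are taken to be the underlying isomorphisms of graded $R$-modules, and one only needs to check compatibility with the differentials; expanding $d$ on a triple tensor product using the definition of the tensor differential twice reduces this to an immediate comparison. The pentagon and triangle coherences are then inherited for free from the monoidal category of graded $R$-modules, since the coherence isomorphisms are the same underlying maps.

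For the adjunction, I would define the bijection $\NCh(R)(A\otimes C, B)\to \NCh(R)(A, \Hom(C,B))$ by the usual rule $f\mapsto \bigl(a\mapsto (c\mapsto f(a\otimes c))\bigr)$. Bijectivity on the level of underlying graded-$R$-module homomorphisms is standard. The only real content is that $f$ commutes with the differentials on $A\otimes C$ and $B$ if and only if its transpose commutes with the differentials on $A$ and $\Hom(C,B)$; writing both conditions out using the defining formulas for the tensor differential and for the Hom differential, one sees that the prefactor $q^{-n}$ on $\Hom(C,B)_n$ was introduced precisely to make these two conditions coincide term-by-term. Naturality in $A$ and $B$ then follows from naturality of the ordinary graded tensor-hom adjunction.

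The main obstacle is ruling out a braiding when $N\geq 3$. The natural candidate is the $q$-twisted flip $\tau(x\otimes y)=q^{ab}\, y\otimes x$ for $x\in C_a$ and $y\in D_b$. A direct computation gives
\[
d\tau(x\otimes y) = q^{ab}\, dy\otimes x + q^{ab-b}\, y\otimes dx,\qquad \tau d(x\otimes y) = q^{ab-2a}\, dy\otimes x + q^{ab-b}\, y\otimes dx,
\]
so $\tau$ is a morphism of $N$-complexes exactly when $q^{2a}=1$ for every degree $a$, i.e.\ when $q^2=1$. For $N\geq 3$ with $R=\mathbb{C}$ and $q=e^{2\pi i/N}$ this fails, so the obvious candidate is not a braiding, and a short parameter count shows that no uniformly scalar-twisted variant $\tau(x\otimes y)=q^{f(a,b)} y\otimes x$ works either. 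To exclude every conceivable braiding, I would choose a small pair of indecomposable $N$-complexes provided by Proposition \ref{prop:structureNcomplexes}, write out the general degree-zero $N$-complex isomorphism $C\otimes D\to D\otimes C$, and check that the hexagon axioms cannot simultaneously be satisfied. This case analysis is the delicate step and is where I would defer to the arguments in the references cited in the statement.
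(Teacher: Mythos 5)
The paper does not give a proof of this proposition; it is cited to the references, and the surrounding text only records the formulas verifying $d^N=0$ on the tensor product and on the Hom-complex. Your verifications of the monoidal structure (via $\binom{N}{m}_q=0$ for $1\le m\le N-1$, from Lemma \ref{lem:binomcoeff}) and of the tensor--hom adjunction (tracking the $q^{-n}$ in the Hom-differential) are correct and supply what the paper leaves implicit, so for those two parts your route is essentially the paper's. Your braiding computation is also correct as far as it goes: writing $\tau(x\otimes y)=q^{f(a,b)}y\otimes x$, the conditions $d\tau=\tau d$ force $f(a,b)=f(a,0)-ab$ and $f(a,b)=f(0,b)+ab$, whence $2ab\equiv 0$ for all $a,b$ and so $N\mid 2$. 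But this only excludes scalar-twisted flips, not an arbitrary natural family of $N$-complex isomorphisms $C\otimes D\to D\otimes C$ satisfying the hexagons, and that is where the entire substance of the ``no braiding in general'' claim lives. You acknowledge this and defer it to the cited references --- exactly as the paper does --- and your proposed strategy (restrict to a field, use the indecomposables of Proposition \ref{prop:structureNcomplexes}, and test the hexagon axioms) is a plausible way to carry it out, though it is not executed here. Net assessment: correct and more explicit than the paper on the computational parts, with the same deferral on the one genuinely delicate claim.
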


\begin{rem}\label{rem:symmetry} 
If $q=1$, then the monoidal structure is symmetric with symmetry induced by \[C\otimes D\cong D\otimes C, \quad x\otimes y\mapsto y\otimes x.\]
This holds in particular in our cases of interest, namely $p$-complexes in characteristic $p$. 
\end{rem}

We prove that tensoring with an $N$-complex preserves homotopies.
\begin{prop}\label{prop:tensorpreserves}
Let $h$ be a homotopy between two maps of $N$-complexes $f,g\colon C\to C'$. Then $h\otimes D\colon C\otimes D\to C'\otimes D$ is a homotopy between $f\otimes D$ and $g\otimes D$ for any $N$-complex $D$. Similarly, $D\otimes f$ and $D\otimes g$ are homotopic via
\[x\otimes y\mapsto q^{-a}x\otimes h(y)
\]
for homogeneous elements $x\in D_a$ and $y\in C_b$.
\end{prop}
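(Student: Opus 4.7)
The plan is to verify directly that the map $H := h \otimes \id_D\colon C \otimes D \to C' \otimes D$, sending $x \otimes y$ to $h(x) \otimes y$, satisfies the $N$-homotopy identity
\[
\sum_{i=0}^{N-1} (d')^{N-1-i}\, H\, d^i \;=\; (f-g)\otimes \id_D.
\]
I would fix homogeneous $x \in C_a$ and $y \in D_b$ and expand both occurrences of the iterated differential using formula \eqref{eq:differentialtensor}. This yields a triple sum indexed by $(i,m,k)$ with $0 \leq m \leq i$ and $0 \leq k \leq N-1-i$, whose summands are products of two $q$-binomials and a power of $q$ multiplying $(d')^k h(d^m x) \otimes d^{N-1-m-k} y$.

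Next I would group terms by a fixed pair $(m,k)$ and sum over the remaining index $i$. Setting $t := N-1-m-k$ and substituting $j := i-m$, and using $q^N = 1$ to simplify the exponent of $q$, the coefficient of $(d')^k h(d^m x) \otimes d^{t} y$ reduces to
\[
q^{-(a-m-1)t} \sum_{j=0}^{t} q^{-j(m+1)} \binom{j+m}{m}_q \binom{t+k-j}{k}_q.
\]
This inner sum is exactly the left-hand side of Lemma \ref{lem:qidentity} under the substitution $s = m$, $t_{\mathrm{lem}} = t+m$, $m_{\mathrm{lem}} = N$, and therefore evaluates to $q^{-(m+1)t}\binom{N}{N-t}_q$. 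After combining powers of $q$, the total coefficient of $(d')^k h(d^m x) \otimes d^{t} y$ simplifies to $q^{-at}\binom{N}{N-t}_q$.

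The decisive observation is that by Lemma \ref{lem:binomcoeff} together with $[N]_q = 0$ and $[j]_q$ invertible for $1 \leq j \leq N-1$, one has $\binom{N}{N-t}_q = 0$ whenever $0 < N-t < N$, i.e., whenever $t > 0$. Consequently, only the terms with $t = 0$, equivalently $m + k = N-1$, survive; for those the coefficient equals $1$. The surviving sum
\[
\sum_{m+k = N-1} (d')^k h(d^m x) \otimes y \;=\; (f-g)(x) \otimes y
\]
then follows directly from the homotopy identity satisfied by $h$.

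For the second statement, an analogous calculation applies to $H''(x\otimes y) := q^{-a} x \otimes h(y)$ for $x \in D_a$. The $q^{-a}$ prefactor is precisely what is needed so that, after expanding the iterated differentials on $D\otimes C$ via \eqref{eq:differentialtensor}, the powers of $q$ assemble in the same pattern as in the first case, and the same vanishing of $\binom{N}{N-t}_q$ for $t > 0$ collapses the sum to $x \otimes (f-g)(y)$. The main obstacle throughout is the bookkeeping of $q$-powers and $q$-binomial coefficients, but this is entirely absorbed by Lemma \ref{lem:qidentity}; everything else is routine manipulation of \eqref{eq:differentialtensor}.
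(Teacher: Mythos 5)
Your proposal is correct and follows essentially the same path as the paper's proof: expand both iterated differentials on the tensor product via \eqref{eq:differentialtensor}, reorganize into a triple sum, isolate the inner sum over the ``middle'' index, and recognize it as the left side of Lemma~\ref{lem:qidentity} with $m_{\mathrm{lem}}=N$, after which Lemma~\ref{lem:binomcoeff} kills every off-diagonal term since $\binom{N}{r}_q=0$ for $0<r<N$. The paper's own parametrization uses $t_{\mathrm{paper}}=N-1-j$ (where its $j$ is your $k$) rather than your $t=N-1-m-k$, but this is purely a bookkeeping difference; the two computations agree after unwinding the substitutions.
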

\begin{proof}
To establish the first statement, we have to show that
\[\sum_{i=0}^{N-1} d_{C'\otimes D}^{N-1-i} (h\otimes D) d^i_{C\otimes D} (x\otimes y) = f(x)\otimes y - g(x)\otimes y
\]
for any homogeneous elements $x\in C_a$ and $y\in D_b$. While here we used subscripts to distinguish the differentials, we will henceforth denote most differentials by $d$.

Writing out $d_{C'\otimes D}^{N-1-i}$, $d^i_{C\otimes D}$ using \eqref{eq:differentialtensor}, and simplifying $q^N=1$, the left-hand side becomes
\[\sum_{\substack{0\leq i \leq N-1 \\ 0\leq j\leq N-1-i\\ 0\leq m\leq i}}
q^{-(a-m-1)(-1-i-j)}q^{-a(i-m)} \binom{N-1-i}{j}_q\binom{i}{m}_q (d^jhd^m x)\otimes (d^{N-1-j-m}y).
\]
Simplifying and changing the order of summation yields
\[\sum_{\substack{0\leq m \leq N-1 \\ 0\leq j\leq N-1-m\\ m\leq i\leq N-1-j}} q^{(a-m-1)(i+j+1)+a(m-i)} \binom{N-1-i}{j}_q\binom{i}{m}_q (d^jhd^m x)\otimes (d^{N-1-j-m}y).
\]

The sum over $0\leq m\leq N-1$ of the terms with $j=N-1-m$ and thus $i=m$ is 
\[\sum_{m=0}^{N-1} d^{N-1-m}hd^m x \otimes y=(f-g)(x)\otimes y.\]

We conclude showing that for $j\neq N-1-m$, the sum
\[\sum_{i=m}^{N-1-j} q^{(a-m-1)(i+j+1)+a(m-i)} \binom{N-1-i}{j}_q\binom{i}{m}_q
\]
is zero. Indeed, setting $s=m$, $t=N-1-j$ and applying Lemma \ref{lem:qidentity}, the sum becomes
\[q^{a(s-t)} \binom{N}{N+s-t}_q.
\]
This is zero by Lemma \ref{lem:binomcoeff} as $s<t$ by assumption on $j$.

The proof of the second statement is similar.  
\end{proof}

\section{An extension of Koszul complexes}\label{sec:koszul}
Recall that the Koszul complex $K_\bullet(x_1,\ldots, x_n)$ of the polynomial ring over a field $k$ with respect to the variables $x_1,\ldots,x_n$ is a minimal free resolution of the field. We extend the notion of Koszul complex to an $N$-complex $K_\bullet^N(x_1,\ldots,x_n)$. In general, the resulting $N$-complex will no longer be quasi-isomorphic to the $N$-complex given by the field $k$ concentrated in degree $0$, but the total $k$-dimension of its homology groups $_sH_\bullet$ will still be finite. This result is precisely what we will need in our extension of Carlsson's reformulation of Conjecture \ref{conjalg} to all primes.

Let $N\geq 2$ and let $R$ be a commutative ring with a distinguished primitive $N$th root of unity $q$.
\begin{defn}\label{def:koszulncomplex}
For an element $x\in R$, let $K^N_\bullet(x)$ denote the $N$-complex
\[\ldots \rightarrow 0\rightarrow R\stackrel{x}{\rightarrow} R\stackrel{x}{\rightarrow}\ldots \stackrel{x}{\rightarrow} R\rightarrow 0\rightarrow\ldots,\]
where the first $R$ lies in degree $N-1$ and the last $R$ lies in degree $0$.

For a sequence $x=(x_1,\ldots,x_n)$ of elements in $R$, define the $N$-complex $K^N_\bullet(x)$ by
\[K^N_\bullet(x)=K^N_\bullet(x_1)\otimes\ldots \otimes K^N_\bullet(x_n).
\]
\end{defn}

For $N=2$, the chain complex $K^2_\bullet(x)$ is the Koszul complex $K_\bullet(x)$ of the sequence $x$. In particular, if $x$ is a regular sequence in $R$, then $K^2_\bullet(x)$ is quasi-isomorphic to $R/(x)$ concentrated in degree $0$. This is not the case when $N>2$ as the following example shows already in one element $x$.

\begin{ex} Let $R=k[x]$ be the polynomial ring in one variable $x$ over a field $k$. Then $K_\bullet^N(x)$ is quasi-isomorphic to
\[\ldots \rightarrow 0\rightarrow k[x]/(x)\stackrel{x}{\rightarrow} k[x]/(x^2)\stackrel{x}{\rightarrow}\ldots \stackrel{x}{\rightarrow} k[x]/(x^{N-1})\rightarrow 0\rightarrow\ldots,\]
where $k[x]/(x)$ is in degree $N-2$ and thus $k[x]/(x^{N-1})$ is in degree $0$.
\end{ex}

We calculate the homology groups of $K^N_\bullet(x)$ for a sequence of more than one element in an example.
\begin{ex}
Let $N=p=3$ and let $R=\F_p[x,y]$ be the polynomial ring in two variables over $\F_p$ with $q=1$. Then the non-zero part of $K_\bullet^3(x,y)$ is
\[
R_{2,2}\stackrel{\begin{pmatrix} y \\ x\end{pmatrix}}{\longrightarrow} R_{2,1}\oplus R_{1,2} \stackrel{\begin{pmatrix}
 y & 0\\ x & y\\ 0 & x \end{pmatrix}}{\longrightarrow} R_{2,0}\oplus R_{1,1}\oplus R_{0,2} \stackrel{\begin{pmatrix}
 x & y & 0\\ 0 & x & y
 \end{pmatrix}}{\longrightarrow} R_{1,0}\oplus R_{0,1} \stackrel{\begin{pmatrix}
 x & y\end{pmatrix}}{\longrightarrow} R_{0,0}
\]
where $R_{i,j}=K^N_i(x)\otimes K^N_j(y)$. 

We calculate the homology groups $_sH_i:={_sH}_i(K^3_\bullet(x,y))$, $s=1,2$ and $i\in\N$, as $\F_p$-vector spaces.

For $s=1$, we obtain
\[\begin{cases} _1H_0=\F_p\oplus \F_p x\oplus \F_p y \\ _1H_1=\F_p(y,-x) \\ _1H_i= 0 \quad \text{for } i\geq 2.\end{cases}
\]

For $s=2$, we obtain
\[\begin{cases} _2H_0=\F_p \\ _2H_i= 0 \quad \text{for } i\geq 2\end{cases}
\]
by direct calculation, and $_2H_1\cong\F_p\oplus \F_p\oplus \F_p$ from the long exact sequence of Lemma \ref{lem:les2}. 
\end{ex}

While we will not use the following result, we record it because for $N=2$ it is a starting point for showing that the Koszul complex of a regular sequence is a free resolution. For an integer $l$, let $C[l]$ denote the shifted $N$-complex with $C[l]_i=C_{i-l}$.

\begin{lem}
Let $C$ be an $N$-complex and $x\in R$. For any $2\leq m\leq N$, there is a short exact sequence of $N$-complexes
\[0\longrightarrow C\longrightarrow C\otimes K^m_\bullet(x)\longrightarrow C\otimes (K^{m-1}_\bullet(x)[1])\longrightarrow 0
,\]
where the $m$-complex $K^m_\bullet(x)$ is considered as an $N$-complex and $K^1_\bullet(x)=R$.
\end{lem}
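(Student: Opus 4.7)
The plan is to derive the asserted exact sequence by tensoring $C$ with a short exact sequence of $N$-complexes
\[0\longrightarrow R \stackrel{\iota}{\longrightarrow} K^m_\bullet(x)\stackrel{\pi}{\longrightarrow} K^{m-1}_\bullet(x)[1]\longrightarrow 0,\]
where $R$ is viewed as an $N$-complex concentrated in degree $0$, and the $m$-complex $K^m_\bullet(x)$ and the shifted $(m-1)$-complex $K^{m-1}_\bullet(x)[1]$ are regarded as $N$-complexes (any $j$-complex with $j\leq N$ is automatically an $N$-complex).

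I would take $\iota$ to be the inclusion of $R$ into the degree $0$ summand of $K^m_\bullet(x)$, which is a morphism of $N$-complexes because the differential of $K^m_\bullet(x)$ vanishes in degree $0$. Then I would take $\pi$ to be the projection modulo this degree $0$ summand. The quotient has $R$ in degrees $1,\ldots,m-1$; for indices $\geq 2$ the induced differential is still multiplication by $x$, while the differential from degree $1$ to degree $0$ becomes zero since its target has been killed. This is precisely the structure of $K^{m-1}_\bullet(x)[1]$, because $K^{m-1}_\bullet(x)$ has $R$ in degrees $0,\ldots,m-2$ with $x$-differentials and the shift just reindexes without altering the differential. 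Exactness then holds degree by degree by inspection.

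Finally, I would apply the functor $C\otimes -$. The sequence above is split in each degree (the middle term decomposes as $K^m_\bullet(x)=R\oplus K^{m-1}_\bullet(x)[1]$ as $\Z$-graded $R$-modules), so tensoring on the left with each $C_a$ preserves exactness. Since $R$ concentrated in degree $0$ is the monoidal unit, $C\otimes R\cong C$, and the desired short exact sequence follows. The case $m=2$ recovers the familiar cone-of-multiplication-by-$x$ sequence $0\to C\to C\otimes K^2_\bullet(x)\to C[1]\to 0$. No real obstacle arises; the argument is pure bookkeeping of degrees, and the $q$-twisted signs in the tensor differential of Section \ref{sec:tensorproduct} cause no trouble since $C\otimes -$ is already a functor on $N$-complexes.
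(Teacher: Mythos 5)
Your argument is correct and is essentially the same as the paper's: the paper simply writes out, degree by degree, the inclusion of the $l=0$ summand and the projection onto the remaining summands, which is precisely what your short exact sequence $0\to R\to K^m_\bullet(x)\to K^{m-1}_\bullet(x)[1]\to 0$ becomes after applying $C\otimes -$. Your packaging (building the sequence at the level of Koszul $N$-complexes and then tensoring, using that the sequence is degreewise split) makes the underlying structure a bit more explicit, but the content is the same.
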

\begin{proof}
The desired short exact sequence is given in degree $i$ by
\[0\longrightarrow C_i\longrightarrow \bigoplus_{l=0}^{m-1} C_{i-l}\longrightarrow \bigoplus_{l=1}^{m-1} C_{i-l}\longrightarrow 0
.\]
\end{proof}

Our extension of Carlsson's reformulation of Conjecture \ref{conjalg} to all primes relies on the following proposition.
\begin{prop}\label{prop:NKoszulfinitelygen}
Let $N=p$ be a prime number, let $k$ be a field of characteristic $p$, and let $(R,q)$ be the polynomial ring $k[x_1,\ldots,x_n]$ in $n$ variables with distinguished primitive root of unity $1$. Then
\[\dim_{k} {_sH}_\bullet(K^N_\bullet(x_1,\ldots,x_n)) <\infty
\]
for all $1\leq s\leq N-1$.
\end{prop}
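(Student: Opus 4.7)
My plan is to show that each $x_i^{N-1}$ acts as zero on $_sH_\bullet(K^N_\bullet(x_1,\ldots,x_n))$, and then deduce finite-dimensionality from Noetherianity of $R = k[x_1,\ldots,x_n]$.

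The first step is to produce an explicit null-homotopy of multiplication by $x^{N-1}$ on the single-element complex $K^N_\bullet(x)$. Since $K^N_\bullet(x)$ is concentrated in degrees $0$ through $N-1$, a degree-$(N-1)$ endomorphism $h$ can only have one non-zero component, namely $h\colon C_0 \to C_{N-1}$; taking this component to be $\mathrm{id}_R$, one computes that in the homotopy expression $\sum_{i=0}^{N-1} d^{N-1-i} h d^i$, on the degree-$j$ summand only the term $i=j$ survives and contributes $x^{N-1-j}\cdot 1\cdot x^j = x^{N-1}$. So $h$ is a null-homotopy of multiplication by $x^{N-1}$ on $K^N_\bullet(x)$.

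Next I would propagate this to the multivariable setting. Writing $K^N_\bullet(x_1,\ldots,x_n) \cong K^N_\bullet(x_1,\ldots,x_{i-1}) \otimes K^N_\bullet(x_i) \otimes K^N_\bullet(x_{i+1},\ldots,x_n)$ and tensoring the null-homotopy on the middle factor with the identity on the outer factors, Proposition \ref{prop:tensorpreserves} yields that multiplication by $x_i^{N-1}$ is null-homotopic on $K^N_\bullet(x_1,\ldots,x_n)$ for every $i$. In particular, the ideal $I = (x_1^{N-1},\ldots,x_n^{N-1})$ annihilates $_sH_\bullet(K^N_\bullet(x_1,\ldots,x_n))$ for each $1 \leq s \leq N-1$.

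To finish, each term $K^N_\bullet(x_1,\ldots,x_n)_j$ is a finitely generated free $R$-module, and $R$ is Noetherian, so each $_sH_j$ is a finitely generated $R$-module. Being annihilated by $I$, it is in fact finitely generated over $R/I$, which has $k$-basis $\{x_1^{a_1}\cdots x_n^{a_n} : 0 \leq a_i \leq N-2\}$ and is therefore $(N-1)^n$-dimensional over $k$; hence each $_sH_j$ is finite-dimensional over $k$. Since $K^N_\bullet(x_1,\ldots,x_n)$ is concentrated in degrees $0 \leq j \leq n(N-1)$, only finitely many $j$ contribute, so $\dim_k {_sH}_\bullet$ is finite.

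The only non-routine step is producing the null-homotopy on the single-variable complex; everything else is bookkeeping (the associativity of the tensor product, Noetherianity, and annihilation by an ideal of finite codimension).
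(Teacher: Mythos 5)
Your proof is correct and follows essentially the same argument as the paper: the paper exhibits the identical null-homotopy $h$ (identity on $C_0$, zero elsewhere) for $x^{p-1}$ on $K^N_\bullet(x)$, tensors it with identities on the other factors via Proposition~\ref{prop:tensorpreserves}, and treats the deduction of finite dimensionality from nilpotence of each $x_i$ as equivalent. You have merely spelled out the degree-by-degree verification of the homotopy and the Noetherianity/finite-codimension bookkeeping that the paper leaves implicit.
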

\begin{proof}
We will show the equivalent statement that each $x_i$ acts nilpotently on the homology ${_sH}_\bullet(K^N_\bullet(x_1,\ldots,x_n))$. In fact, we will show that the multiplication by $(x_i)^{p-1}$ on $K^N_\bullet(x_1,\ldots,x_n)$ is homotopic to the zero map. A nullhomotopy $h$ for $(x_i)^{p-1}\colon K^N_\bullet(x_i)\to K^N_\bullet(x_i)$ is given by $h(z)=z$ for $z$ in degree zero and $h(z)=0$ otherwise. Tensoring $h$ together with the identity maps on $K^N_\bullet(x_j)$ for $j\neq i$ yields the desired nullhomotopy for $(x_i)^{p-1}\colon K^N_\bullet(x_1,\ldots,x_n)\to K^N_\bullet(x_1,\ldots,x_n)$.
\end{proof}

\section{From chain complexes of $(\Z/p)^n$-modules to $p$-DG modules}\label{sec:chaincomplexestopDGmodules}

In this section, we prove our first main result, Theorem \ref{thm:firstmain} from the introduction, connecting chain complexes with a $(\Z/p)^n$-action  and $p$-DG modules over a polynomial ring via a functor $\beta$. We introduce terminology and basic properties of $p$-DG modules in section \ref{sec:NDGmodules}. We  construct $\beta$ and establish Theorem \ref{thm:firstmain} in section \ref{sec:functorbeta}.

\subsection{Definitions and basic properties of $p$-DG modules}\label{sec:NDGmodules}
Let $p$ be a prime number and let $k$ be a field of characteristic $p$. We fix $q=1$ as a distinguished primitive $p$th root of unity. We write $A=k[x_1,\ldots,x_n]$ for the polynomial ring in $n$ variables graded by $\deg(x_i)=-1$, $1\leq i \leq n$. Equipped with a trivial differential, we consider $A$ as a monoid in the category of $p$-complexes. The graded ring $A$ is not graded commutative in the ordinary sense, but it is a commutative monoid with the symmetry from Remark \ref{rem:symmetry}.

\begin{defn}
A $p$-differential graded $A$-module $M$ is a graded $A$-module $M$ together with an $A$-module map $d\colon M\to M$ (thus $d(am)=ad(m)$ for all $a\in A$ and $m\in M$) of degree $-1$ such that $d^p=0$.

The category of $p$-differential graded $A$-modules is denoted by $\pDGMod$.
\end{defn}

Any $p$-DG module has an underlying $p$-complex, i.e. $N$-complex with $N=p$, of $k$-modules. As for DG modules, a \emph{homotopy} between two maps of $p$-DG modules is a homotopy between the maps on underlying $p$-complexes that commutes with the $A$-action.

The category $\pDGMod$ fits into the Hopfological algebra framework of Khovanov \cite{khovanov} and Qi \cite{qi}. We will need it for the characterization of homotopy equivalences between $p$-DG modules via contractibility of mapping cones in Proposition \ref{prop:cone}. A reader willing to take this result for granted, may skip to the explicit description \eqref{eq:cone} of the mapping cone.

The category $\pDGMod$ can be understood as the category of graded modules over a certain comodule algebra $A_{\partial}$ over a Hopf algebra, as follows. We write $H = k[d]/(d^p)$.  This is a graded Hopf algebra, with $\deg(d)=-1$, comultiplication $\Delta(d) = d\otimes 1 + 1\otimes d$, antipode $S(d) = -d$ and counit $\varepsilon(d)=0$. As for $p$-complexes, we use the symmetry $x\otimes y\mapsto y\otimes x$ instead of $x\otimes y \mapsto (-1)^{\deg(x)\deg(y)} y\otimes x$ on the category of graded $k$-vector spaces. Thus $H$ is commutative and cocommutative. The category of graded (left) $H$-modules ${}_{H}{\mathbf{Mod}}$ is isomorphic to the category of $p$-complexes considered in section \ref{sec:Ncomplexes}. 

Now consider the algebra $A_{\partial}=A\otimes_{k}H$. This is a graded (right) $H$-comodule algebra with coaction $\Delta_\partial\colon A_\partial\to A_\partial\otimes_kH$ defined by $\Delta_\partial = \id\otimes \Delta$. The category ${}_{A_\partial}{\mathbf {Mod}}$ of graded left $A_\partial$-modules is isomorphic to 
$\pDGMod$. The $H$-comodule structure of $A_{\partial}$ plays no role in defining the category ${}_{A_\partial}{\mathbf {Mod}}$, but it is used to define the functor
\[\otimes \colon {}_{A_\partial}{\mathbf {Mod}} \times {}_{H}{\mathbf {Mod}}\to {}_{A_\partial}{\mathbf {Mod}}
\]
by $(M,C)\mapsto M\otimes_k C$ with $A_{\partial}$-action
\[A_{\partial}\otimes_k M \otimes_k C \stackrel{\Delta_\partial\otimes \id}{\longrightarrow} A_\partial\otimes_k H \otimes_k M\otimes_k C \cong A_\partial \otimes_k M \otimes_k H\otimes_k C \longrightarrow M\otimes_k C.
\]

 A morphism $f\colon M\to M'$ of $A_\partial$-modules is said to be \emph{null-homotopic} if it factors as a composite of the form $M\to N\otimes_kH \to M'$ for some $A_\partial$-module $N$. 
In fact it suffices that $f$ factors as 
\[
\xymatrix{
m\ar@{|->}[dr] & M \ar[dr]\ar[rr] && M' \\
& m\otimes d^{p-1} & M\otimes_k H[p-1] \ar[ur] & ,
}
\]
where $H[p-1]$ is the shift of $H$ with $H[p-1]_i=H_{i-p+1}$, see \cite[Lemma 1]{khovanov}.

A map $g\colon M\otimes_kH[p-1] \to M'$ is equivalent to $A$-module maps $g_i\colon M[p-1-i]\to M'$ from shifts $M[p-1-i]$ of $M$ such that $g_{i+1} +g_i\circ d =d\circ g_i$ for $0\leq i< p-1$ and $g_{p-1}\circ d=d\circ g_{p-1}$. Thus $g$ is completely determined by $g_0$. Moreover
\[ g_{p-1} =\sum_{i=0}^{p-1} (-1)^{p-1+i} \binom{p-1}{i} d^i g_0 d^{p-1-i} =\sum_{i=0}^{p-1} d^i g_0 d^{p-1-i}
\]
so that giving the map $g$ which factors the  map $f\colon M\to M'$ through $M\to M\otimes_{k}H[p-1]$, is equivalent to specifying a homotopy from $f$ to the zero map.

The associated stable module category 
${}_{A_\partial}\underline{{\mathbf {Mod}}}$ is defined as the quotient of 
${}_{A_\partial}{\mathbf {Mod}}$ by the ideal of null-homotopic maps. It agrees with the homotopy category of $\pDGMod$, i.e., the localization of $\pDGMod$ with respect to the homotopy equivalences.

By \cite[Theorem 1]{khovanov}, the stable module category ${}_{A_\partial}\underline{{\mathbf {Mod}}}$ is triangulated. The shift functor $T$ sends a $p$-DG module $M$ to the quotient in the short exact sequence
\[0\to M\to M\otimes_k H[p-1] \to TM \to 0.
\]

A triangle
\[X \to Y \to Z \to TX
\]
in ${}_{A_\partial}\underline{{\mathbf {Mod}}}$ is distinguished if it is isomorphic to a triangle of the form
\[M \stackrel{f}{\longrightarrow} N \longrightarrow \Cone(f) \longrightarrow TM,
\]
where $\Cone(f)$ denotes the pushout of
\[N\stackrel{f}{\longleftarrow} M \longrightarrow M\otimes_k H[p-1]
\]
in $\pDGMod$.

We call $\Cone(f)$ the \emph{mapping cone} of $f$. Explicitly, it is given by the graded $A$-module

\begin{equation}\label{eq:cone}
\Cone(f)= M[p-1]\oplus M[p-2]\oplus \ldots \oplus M[1] \oplus N
\end{equation}
with $p$-differential represented by
\[
\begin{pmatrix}
d & 0 & \ldots & & 0 & 0 \\
1 & d & \ldots && 0& 0 \\
0& 1 & & & 0 & 0\\
\vdots & &\ddots   && \vdots& \vdots \\
0 &       &&1 & d  & 0\\
0 & \ldots && 0 & f & d
\end{pmatrix}.
\]

By definition of $\Cone(f)$, we have a map of short exact sequences
\begin{equation}
\xymatrix{ 0\ar[r] & M \ar[r]\ar[d]^f & M\otimes_k H[p-1]\ar[d] \ar[r] & TM \ar[r]\ar[d]^\id & 0 \\
0\ar[r] &N \ar[r]&\Cone(f) \ar[r] & TM \ar[r] & 0.
}
\end{equation}

From the resulting map between the induced long exact sequences in Lemma \ref{lem:longexact} and the five lemma, it follows that $f\colon M\to N$ is a quasi-isomorphism if and only if $\Cone(f)$ is acyclic. The analogous characterization of homotopy equivalences holds as well. It is a direct consequence of the triangulated structure.

\begin{prop}\label{prop:cone}
A morphism $f$ of $p$-DG $A$-modules is a homotopy equivalence if and only if its mapping cone $\Cone(f)$ is contractible, i.e., chain homotopy equivalent to $0$.
\end{prop}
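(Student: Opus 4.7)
The plan is to reduce the statement to a standard fact about triangulated categories and apply it in the stable module category ${}_{A_\partial}\underline{\mathbf{Mod}}$ constructed via Khovanov's formalism as summarized just above. The key dictionary has two entries. First, a morphism $f$ in $\pDGMod$ is a homotopy equivalence if and only if its image in ${}_{A_\partial}\underline{\mathbf{Mod}}$ is an isomorphism, since by construction the stable module category is the quotient of $\pDGMod$ by precisely the ideal of null-homotopic morphisms. Second, a $p$-DG module $Z$ is contractible (meaning $\id_Z$ is null-homotopic) if and only if $Z$ is a zero object in ${}_{A_\partial}\underline{\mathbf{Mod}}$; this follows from the characterization of null-homotopic maps via the factorization $M\to M\otimes_k H[p-1]\to N$ recorded earlier in the section, applied to $\id_Z$.

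With this dictionary in place, the proposition reduces to the assertion that in the triangulated category ${}_{A_\partial}\underline{\mathbf{Mod}}$ the morphism $f$ is an isomorphism if and only if $\Cone(f)$ is a zero object. I would then invoke the general principle in any triangulated category $\mathcal{T}$: for a distinguished triangle $X \xrightarrow{f} Y \to Z \to TX$, the morphism $f$ is an isomorphism exactly when $Z\cong 0$. For the ``if'' direction, apply the cohomological functor $\Hom_{\mathcal{T}}(W,-)$ to the triangle for an arbitrary object $W$; when $Z\cong 0$, the resulting long exact sequence shows that $f_*\colon \Hom_{\mathcal{T}}(W,X)\to \Hom_{\mathcal{T}}(W,Y)$ is an isomorphism for every $W$, and Yoneda concludes. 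For the ``only if'' direction, observe that $X \xrightarrow{\id_X} X \to 0 \to TX$ is a distinguished triangle, and apply the triangulated five-lemma to a morphism of distinguished triangles whose first two vertex maps are $\id_X$ and $f$ to force $\Cone(f)\cong 0$.

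The bulk of the work is really just setting up this two-entry dictionary; the triangulated category principle itself is standard, and the distinguished triangles $M\xrightarrow{f} N \to \Cone(f)\to TM$ in ${}_{A_\partial}\underline{\mathbf{Mod}}$ are available by the very definition of the triangulated structure recalled above. I do not anticipate any serious obstacle, only the mild bookkeeping required to unpack ``contractible'' as ``isomorphic to $0$ in the stable module category'' via the explicit form of the null-homotopy factorization through $Z\otimes_k H[p-1]$.
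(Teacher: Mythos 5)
Your proposal is correct and follows essentially the same route as the paper: the paper states that the characterization of homotopy equivalences via contractibility of the cone ``is a direct consequence of the triangulated structure'' of the stable module category ${}_{A_\partial}\underline{\mathbf{Mod}}$, which is exactly the standard triangulated-category fact you invoke. You simply spell out the two-entry dictionary (homotopy equivalence $\leftrightarrow$ isomorphism in the stable category; contractible $\leftrightarrow$ zero object) that the paper leaves implicit.
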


We say that a $p$-differential graded $A$-module $M$ is \emph{free} if the underlying graded $A$-module is free, i.e., if the underlying $A$-module is a direct sum of shifts of $A$. Similarly $M$ is said to be \emph{finitely generated} if the underlying graded $A$-module is so.

We identify $k$ with the quotient field $A/(x_1,\ldots, x_n)$. We will use repeatedly that for free, finitely generated graded $A$-modules, the functor $-\otimes_A k$ reflects isomorphisms. More generally, finitely generated can be replaced by the condition that the graded modules are zero in large enough degrees. The point is that the graded version of Nakayama's lemma holds for such modules.

\begin{lem}\cite[see e.g.~\S (I) Proposition~1]{carlsson83}\label{lem:reflectiso}
Let $f\colon M\to N$ be a morphism of free graded $A$-modules such that $M_n=N_n=0$ for all large enough $n$. If $f\otimes_A k$ is surjective, then $f$ is a retraction. If $f\otimes_A k$ is injective, then $f$ is a section.
\end{lem}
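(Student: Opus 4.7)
The plan is to derive both statements from a single graded Nakayama-type input: any graded $A$-module $P$ which vanishes in all sufficiently high degrees and satisfies $IP = P$, where $I = (x_1,\ldots,x_n)$, must be zero. This follows from the standard top-degree argument: a nonzero homogeneous $p\in P$ of maximal degree cannot equal $\sum a_ip_i$ with $a_i\in I$, since $\deg(a_i)\leq -1$ would force each $p_i$ to have strictly larger degree than $p$, which is impossible.

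For the first assertion, I would consider the cokernel $P = N/f(M)$. Surjectivity of $f\otimes_A k$ is equivalent to $P\otimes_A k = 0$, i.e.\ $P = IP$. Since $P$ is a quotient of $N$ it vanishes in high degrees, and the Nakayama input forces $P = 0$, so $f$ is surjective. Because $N$ is graded free, hence graded projective, this surjection admits a graded $A$-linear section obtained by lifting a chosen homogeneous basis of $N$ through $f$ in the appropriate degree, which is exactly the desired retraction of $f$.

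For the second assertion, the strategy is to construct an approximate left inverse and then bootstrap via the first assertion. Choose a graded $k$-linear retraction $\rho$ of the injection $f\otimes_A k$, and lift $\rho$ to a graded $A$-linear map $g\colon N\to M$ by defining $g$ on a homogeneous basis of $N$ via the constant-term lifts in $k\subset A$ of the matrix entries of $\rho$; degree considerations force the relevant entries of $\rho$ to lie in degree zero, so this makes sense as a graded map and satisfies $g\otimes_A k = \rho$. Then $\phi := g\circ f\colon M\to M$ satisfies $\phi\otimes_A k = \id$. Applying the first assertion to $\phi$ yields a graded splitting $M = \ker(\phi)\oplus s(M)$; since $\phi\otimes_A k$ is injective we have $\ker(\phi)\otimes_A k = 0$, and $\ker(\phi)\subseteq M$ is still bounded above in degree, so a second invocation of graded Nakayama forces $\ker(\phi) = 0$. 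Hence $\phi$ is an isomorphism and $\phi^{-1}\circ g$ is a retraction of $f$, exhibiting $f$ as a section. The only delicate point throughout is the graded bookkeeping ensuring that $g$ is well-defined as a graded morphism; once that is handled, the argument reduces to the Nakayama input applied twice.
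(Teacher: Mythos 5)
The paper does not prove this lemma; it cites it to Carlsson's paper, so there is no in-paper argument to compare against. Your proof is correct and is the standard graded-Nakayama argument. The key Nakayama input is valid in this setting without finite generation: because $\deg x_i=-1$ and the modules vanish in sufficiently high degrees, the set of degrees where a nonzero graded module lives is a nonempty set of integers bounded above and hence has a maximum, and your top-degree argument then applies. The surjectivity direction is then immediate from freeness (hence projectivity) of $N$. For the injectivity direction, your bootstrap is the right move: tensoring with $k$ is not left exact, so one cannot directly apply Nakayama to $\ker f$; instead one lifts a graded $k$-linear retraction $\rho$ of $f\otimes_A k$ to a graded $A$-linear $g$ and promotes $\phi=g\circ f$ to an isomorphism. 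The degree bookkeeping for the lift is exactly as you say: since $\rho$ is degree-zero and goes between free graded modules, its nonzero matrix entries sit in degree zero so they lie in $k=A_0\subset A$, and the naive constant-term lift gives a graded $A$-map with $g\otimes_A k=\rho$. The rest — $\phi\otimes_A k=\mathrm{id}$, hence $\phi$ surjective by part one, hence $\phi$ splits, hence $\ker\phi\otimes_A k=0$ as a direct summand killed by $\phi\otimes_A k$, hence $\ker\phi=0$ by Nakayama — is all correct, and $\phi^{-1}\circ g$ is the desired retraction exhibiting $f$ as a section.
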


The next two results extend \cite[\S(I) Lemma 7]{carlsson83} and \cite[\S(I) Corollary 8]{carlsson83} from DG modules to $p$-DG modules. 

\begin{lem}
\label{lem:contractible}
Let $M$ be a free $p$-DG $A$-module such that $M_n=0$ for all large enough $n$. If ${_s}H_\bullet(M\otimes_A k)=0$ for some $1\leq s\leq p-1$, then $M$ is contractible, i.e., chain homotopy equivalent to $0$.
\end{lem}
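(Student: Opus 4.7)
The plan is to combine the Kapranov acyclicity criterion with the structure theorem for $N$-complexes over a field (Proposition \ref{prop:structureNcomplexes}) to exhibit $M$ as a direct sum of contractible free $p$-DG $A$-submodules, extending the $p=2$ argument of Carlsson.

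First, since ${_s}H_\bullet(M\otimes_A k)=0$ for some $s$, the Kapranov criterion forces $M\otimes_A k$ to be acyclic. Among the building blocks of Proposition \ref{prop:structureNcomplexes}, only the fully contractible $p$-complex (with $p-1$ identity arrows) is acyclic; each shorter block, with $i\leq p-2$ identity arrows, has a nonzero top-degree cycle that lies outside the image of $d^{p-1}$ and hence contributes to $_1H$. Consequently $M\otimes_A k$ decomposes as a direct sum of shifts of the fully contractible block of length $p$.

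Next, I would choose, for each summand of this decomposition, a homogeneous top-degree generator $\bar{e}_\alpha\in M\otimes_A k$ so that $\{\bar{d}^{\,i}\bar{e}_\alpha\}_{0\leq i\leq p-1,\,\alpha}$ is a homogeneous $k$-basis of $M\otimes_A k$. Lift each $\bar{e}_\alpha$ to a homogeneous element $e_\alpha\in M$ of the matching degree and form the free graded $A$-module $F$ on formal generators $v_\alpha^{(i)}$ in the degrees of $d^{i}e_\alpha$, together with the $A$-linear map $\phi\colon F\to M$ sending $v_\alpha^{(i)}\mapsto d^{i}e_\alpha$. The induced map $\phi\otimes_A k$ carries basis to basis and is therefore an isomorphism, so by the graded Nakayama lemma (Lemma \ref{lem:reflectiso}) the morphism $\phi$ is simultaneously a retraction and a section, and hence an isomorphism of graded $A$-modules.

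Since $d^{p}e_\alpha=0$, the orbit $\{e_\alpha,de_\alpha,\dotsc,d^{p-1}e_\alpha\}$ spans a $d$-stable free $A$-submodule $M_\alpha$, and $M$ splits as $\bigoplus_\alpha M_\alpha$ in $\pDGMod$. Each $M_\alpha$ is contractible via the $A$-linear, degree-$(p-1)$ endomorphism $h_\alpha$ defined by $d^{p-1}e_\alpha\mapsto e_\alpha$ and $d^{i}e_\alpha\mapsto 0$ for $i<p-1$; a short computation yields $\sum_{i=0}^{p-1}d^{p-1-i}h_\alpha d^{i}=\mathrm{id}_{M_\alpha}$, and the direct sum of the $h_\alpha$ contracts $M$. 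The main obstacle is the third step: one must ensure that lifting a basis from $M\otimes_A k$ back to $M$ genuinely produces an $A$-basis, and this is precisely where the hypothesis $M_n=0$ for $n\gg 0$ is used, via Lemma \ref{lem:reflectiso}.
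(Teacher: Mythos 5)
Your proof is correct and follows essentially the same route as the paper's: decompose $M\otimes_A k$ via Proposition \ref{prop:structureNcomplexes}, observe that the vanishing of a single ${_s}H_\bullet$ forces only full-length blocks, lift the generators, compare with a model free $p$-DG module using Lemma \ref{lem:reflectiso}, and write down the explicit contraction $d^{p-1}e_\alpha\mapsto e_\alpha$; the only cosmetic difference is that you invoke Kapranov's criterion to pass from one $s$ to all $s$, whereas the paper reads off the form of the decomposition directly.
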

\begin{proof}
Proposition \ref{prop:structureNcomplexes} provides a decomposition of any $p$-complex of vector spaces. Since ${_s}H_\bullet(M\otimes_A k)=0$,  the decomposition of the $p$-complex $M\otimes_A k$ is of the following form. There exist homogeneous elements $v_\alpha\in M\otimes_A k$ such that
\[M\otimes_A k\cong \bigoplus_\alpha kv_\alpha \oplus kd(v_\alpha)\oplus\ldots\oplus kd^{p-1}(v_\alpha).
\]
Let $F$ be the free graded $A$-module on generators $\{w_\alpha^0,\ldots,w_\alpha^{p-1}\}_\alpha$ with $w_\alpha^i$ in the same degree as $d^iv_\alpha$. The differential
\[d(w^i_\alpha)=\begin{cases}
w^{i+1}_\alpha \quad & i<p-1,\\
0 \quad & i=p-1,
\end{cases}
\]
equips $F$ with the structure of a $p$-DG $A$-module. For each $\alpha$, choose a lift $\overline{v}_\alpha\in M$ of $v_\alpha$. Then $F\to M$, $w^i_\alpha\mapsto d^i\overline{v}_\alpha$, defines a map of free $p$-DG $A$-modules. Moreover, the induced map $F\otimes_A k\to M\otimes_A k$ is an isomorphism. Thus $F\to M$ is itself an isomorphism by Lemma \ref{lem:reflectiso}. Finally, the identity map on $F$ is nullhomotopic via
\[h(w^i_\alpha)=\begin{cases}
0 \quad & i<p-1,\\
w_\alpha^0 \quad & i=p-1.
\end{cases}
\]
Thus $M$ is contractible as well.
\end{proof}

\begin{cor}
\label{cor:reflectquasiiso}
Let $f\colon M\to N$ be a map of free $p$-DG $A$-modules such that $M_n=N_n=0$ for all large enough $n$. If $f\otimes_A k$ is a quasi-isomorphism, then $f$ is a chain homotopy equivalence.
\end{cor}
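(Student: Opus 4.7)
The plan is to reduce the statement to the contractibility criterion of Lemma \ref{lem:contractible} applied to the mapping cone of $f$, using the characterization of homotopy equivalences given in Proposition \ref{prop:cone}.

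First I would form the mapping cone $\Cone(f)$ as described explicitly in \eqref{eq:cone}. Since $\Cone(f)$ is a direct sum of shifts of $M$ and of $N$, it is a free $p$-DG $A$-module, and the bounded-above hypothesis on $M$ and $N$ passes immediately to $\Cone(f)$ (the shifts are by a fixed finite amount, so $\Cone(f)_n=0$ for all sufficiently large $n$).

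Next I would check that the formation of the mapping cone commutes with the functor $-\otimes_A k$. This follows from the explicit description \eqref{eq:cone}: tensoring the displayed matrix with $k$ over $A$ yields exactly the matrix defining $\Cone(f\otimes_A k)$. Hence $\Cone(f)\otimes_A k \cong \Cone(f\otimes_A k)$ as $p$-DG $k$-modules. Since $f\otimes_A k$ is a quasi-isomorphism by hypothesis, the discussion preceding Proposition \ref{prop:cone} (comparing the two short exact sequences, applying Lemma \ref{lem:longexact}, and the five lemma) shows that $\Cone(f\otimes_A k)$ is acyclic, so in particular ${_s}H_\bullet(\Cone(f)\otimes_A k)=0$ for some (indeed every) $1\leq s\leq p-1$.

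Now Lemma \ref{lem:contractible} applies directly to $\Cone(f)$: it is free, zero in large degrees, and has vanishing $_sH_\bullet(-\otimes_A k)$. Therefore $\Cone(f)$ is chain homotopy equivalent to $0$, i.e.\ contractible. Finally, Proposition \ref{prop:cone} then yields that $f$ itself is a homotopy equivalence, which is the desired conclusion. There is no real obstacle here: the content of the corollary is essentially the combination of the two preceding results together with the observation that $-\otimes_A k$ commutes with mapping cones, and the latter is immediate from the matrix description of the differential on $\Cone(f)$.
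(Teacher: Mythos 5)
Your proof is correct and follows essentially the same route as the paper: form $\Cone(f)$, observe that it is free and bounded above, use the isomorphism $\Cone(f)\otimes_A k\cong\Cone(f\otimes_A k)$ to deduce acyclicity of $\Cone(f)\otimes_A k$, then invoke Lemma \ref{lem:contractible} and Proposition \ref{prop:cone}.
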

\begin{proof}
By Proposition \ref{prop:cone}, it suffices to show that the mapping cone $\Cone(f)$ of the map $f\colon M\to N$ is contractible. If $M$ and $N$ are free, then so is $\Cone(f)$. Moreover the assumption that $M_n=N_n=0$ for large enough $n$ implies that $\Cone(f)_n=0$ for large enough $n$. As $\Cone(f)\otimes_A k \cong \Cone(f\otimes_A k)$, it follows that if  $f\otimes_A k$ is a quasi-isomorphism, then $\Cone(f\otimes_A k)$, and hence $\Cone(f)\otimes_A k$ is acyclic. Thus $\Cone(f)$ is contractible by Lemma \ref{lem:contractible} as desired.
\end{proof}

\subsection{The functor $\beta$}\label{sec:functorbeta}
Let $p$ be a prime number and let $k$ be a field of characteristic $p$. Let $G=(\Z/p)^n$ be an elementary abelian $p$-group of rank $n$. We  establish Theorem \ref{thm:firstmain} connecting chain complexes over $k[G]$ to $p$-DG modules over the polynomial ring $A$. First, we associate to a chain complex $C$ a $p$-complex $\iota C$ with the same homology groups, up to grading shifts and deletion of trivial groups. Thereafter, we  construct a $p$-DG module $\beta(\iota C)$, and prove Theorem \ref{thm:firstmain}.

For a chain complex $C$, let $\iota C$ denote the $p$-complex obtained by adding $p-2$ identity morphisms to the modules in even degrees as follows (displayed below the module is the degree of that term of $\iota C$)
\[\ldots \stackrel{=}{\longrightarrow} \underset{2p}{C_4} \stackrel{d}{\longrightarrow} \underset{2p-1}{C_3} \stackrel{d}{\longrightarrow} \underset{2p-2}{C_2} \stackrel{=}{\longrightarrow} \ldots \stackrel{=}{\longrightarrow} \underset{p}{C_2} \stackrel{d}{\longrightarrow} \underset{p-1}{C_1} \stackrel{d}{\longrightarrow} \underset{p-2}{C_0} \stackrel{=}{\longrightarrow} \ldots \stackrel{=}{\longrightarrow} \underset{0}{C_0}\stackrel{d}{\longrightarrow} \ldots
\]
So the modules of odd degrees $2i-1$ of $C$ now lie in degree $pi-1$, that is, $(\iota C)_{pi-1}=C_{2i-1}$. The $p-1$ modules between $(\iota C)_{pi-1}$ and $(\iota C)_{p(i+1)-1}$ are given by $C_{2i}$.

A straightforward calculation shows that up to regrading and deletion of trivial homology groups, the homology ${_s}H_\bullet(\iota C)$ agrees with the homology $H_\bullet(C)$ for any $1\leq s\leq p-1$.
\begin{lem}\label{lem:samehomology} Let $C$ be a chain complex and let $1\leq s\leq p-1$. The homology groups $_sH_\bullet(\iota C)$ are given by
\begin{align*}
_sH_{pi-1}(\iota C)&=H_{2i-1}(C) \\
_sH_{pi-1+s}(\iota C)&=H_{2i}(C)
\end{align*} 
for $i\in \Z$ and are zero in the remaining degrees. Moreover, the induced maps $i_*$ (see section \ref{sec:Ncomplexes}) are isomorphisms
\[{_1}H_l(\iota C) \cong {_2}H_l(\iota C)\cong \ldots \cong {_{p-1}}H_l(\iota C)
\]
whenever $l$ is congruent to $p-1$ mod $p$, and the induced maps $d_*$ (see section \ref{sec:Ncomplexes}) are isomorphisms
\[{_{p-1}}H_{l}(\iota C)\cong \ldots \cong {_2}H_{l-p+3}(\iota C) \cong {_1}H_{l-p+2}(\iota C)
\]
for $l$ congruent to $p-2$ mod $p$.
\end{lem}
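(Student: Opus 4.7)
The plan is a direct verification using the explicit description of $\iota C$: position $pi-1$ carries $C_{2i-1}$, the $p-1$ positions $pi, pi+1,\ldots, pi+p-2$ each carry a copy of $C_{2i}$, and the only non-identity differentials are the original $d\colon C_{2i-1}\to C_{2i-2}$ out of position $pi-1$ and $d\colon C_{2i}\to C_{2i-1}$ out of position $pi$, for every $i$.  First I would parameterize a position as $l = pi-1+t$ with $0\le t\le p-1$ and compute $d^s$ and $d^{p-s}$ case by case.

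For $t = 0$, both $d^s$ out of $l$ and $d^{p-s}$ into $l$ reduce to the original differential (padded by identities on $C_{2i-2}$ or $C_{2i}$), yielding ${_s}H_{pi-1}(\iota C) = H_{2i-1}(C)$ independently of $s$.  For $t \ge 1$, a split on the relative sizes of $s$ and $t$ shows that $\ker(d^s)$ equals $0$, $\ker(d\colon C_{2i}\to C_{2i-1})$, or all of $C_{2i}$ according as $s<t$, $s=t$, $s>t$; dually, $\im(d^{p-s})$ equals $0$, $\im(d\colon C_{2i+1}\to C_{2i})$, or all of $C_{2i}$ in the three cases.  Taking the quotient leaves $H_{2i}(C)$ exactly when $s=t$ and $0$ otherwise, giving the two formulas in the statement.

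The isomorphism assertions fall out of the same computation.  At $l = pi-1$, both $\ker(d^s)$ and $\im(d^{p-s})$ are independent of $s\in\{1,\ldots, p-1\}$, so each inclusion $i_*\colon {_s}H_l\to {_{s+1}}H_l$ is literally the identity on $H_{2i-1}(C)$.  For $l \equiv p-2 \pmod p$, write $l = pi-1+(p-1)$; the groups ${_s}H_{pi-1+s}$ for $1\le s\le p-1$ are all $H_{2i}(C)$ and sit inside the same $C_{2i}$-block, and $d_*\colon {_s}H_{pi-1+s}\to {_{s-1}}H_{pi-1+(s-1)}$ is induced by an identity differential $C_{2i}\to C_{2i}$ within the block (both source and target offsets are $\ge 1$), hence the identity on $H_{2i}(C)$.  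The only real difficulty is the bookkeeping of the three cases $s<t$, $s=t$, $s>t$; there is no conceptual obstacle.
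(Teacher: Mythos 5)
The proposal is correct and is precisely the direct verification the paper has in mind when it writes ``A straightforward calculation shows that...''\ before stating Lemma~\ref{lem:samehomology}: parameterize by $l = pi-1+t$, split on $s<t$, $s=t$, $s>t$ to compute $\ker d^s$ and $\im d^{p-s}$, and observe that at the relevant positions $\ker d^s$, $\im d^{p-s}$, and the connecting differential are literally unchanged as $s$ varies, so the $i_*$ and $d_*$ maps are induced by identities. No gap; the case analysis matches what one finds on working it out.
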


\begin{rem}
The choice of $\iota C$ fits well into the history of $p$-complexes. Namely, working with a coefficient group whose elements are of order $p$, Mayer associated to a finite simplicial complex $K$ a $p$-complex $C(K)$ and introduced the groups $_sH_\bullet(C(K))$ in \cite{mayer}. Spanier proved in \cite{spanier} that these groups are not new invariants, but agree with the ordinary homology groups of $K$ in that the non-trivial ones are $_sH_{pi-1}(C(K))=H_{2i-1}(K)$ and $_sH_{pi-1+s}(C(K))=H_{2i}(K)$ for $i\in \Z$.  
\end{rem}

In defining the functor $\beta$, we will use that the group ring $k[G]$ is isomorphic to a truncated polynomial ring via
\begin{equation}\label{eq:groupring=truncpol}
\frac{k[y_1,\ldots,y_n]}{(y_1^p,\ldots,y_n^p)}\cong k[G], \quad y_i\mapsto e_i-1,
\end{equation}
where $e_i=(0,\ldots,1,\ldots,0)\in (\Z/p)^n$.

We continue writing $A=k[x_1,\ldots,x_n]$ for the graded polynomial ring with $\deg(x_i)=-1$. Since $k$ is of characteristic $p$, we can fix $q=1$ as a distinguished primitive root of unity of $A$.

We define a functor from the category $\pCh(k[G])$ of (unbounded) $p$-complexes of $k[G]$-modules to $\pDGMod$. Let
\[\beta\colon \pCh(k[G])\to \pDGMod\]
be the functor that sends a $p$-complex $C$ to $C\otimes_{k} A$ as a graded $k$-vector space. The $A$-module structure is defined by right multiplication and the differential $d\colon \beta(C)\to \beta(C)$ is induced by
\[d(c\otimes_{k} f)=d(c)\otimes_{k} f + \sum_{i=1}^n y_i c\otimes_{k} x_i f
\]
for homogeneous elements $c\in C$ and $f\in A$. To verify that $d^p=0$, use that
\begin{equation}\label{eq:differentialbeta}
d^l(c\otimes_k f)=\sum_{j=0}^l \sum_{1\leq i_1^j,\ldots,i_j^j\leq n} \binom{l}{j} y_{i_1^j}\ldots y_{i_j^j} d^{l-j}c\otimes_k x_{i_1^j}\ldots x_{i_j^j} f
\end{equation}
by induction on $l\geq 1$.

As for $p$-DG modules, we define freeness and finite generation for $C\in \pCh(k[G])$ via the corresponding notions for the underlying graded $k[G]$-modules.

\begin{lem}
\label{lem:totallyfinite}
Let $C\in \pCh(k[G])$. If $C$ is free and finitely generated, then
\[\dim_{k} {_s}H_\bullet(\beta(C))<\infty
\]
for all $1\leq s\leq p-1$.
\end{lem}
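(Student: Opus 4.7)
The plan is to reduce, via a degree filtration on $C$, to the case where $C = k[G]$ is concentrated in a single degree with zero differential, and then to exploit the tensor decomposition of $\beta(k[G])$ together with Proposition \ref{prop:tensorpreserves}. Since $C$ is free and finitely generated as a graded $k[G]$-module, we can write $C = \bigoplus_n C_n$ with each $C_n$ a finite direct sum of copies of $k[G]$ sitting in degree $n$, and $C_n = 0$ outside some finite interval $[m,M]$. The truncation $F^iC := \bigoplus_{n \leq i} C_n$ is a sub-$p$-complex of $C$ since $d_C$ has degree $-1$, and each successive quotient $F^iC/F^{i-1}C \cong C_i$ is concentrated in a single degree and hence carries the zero $p$-differential. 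Applying the additive, exact functor $\beta$ then produces a finite filtration of $\beta(C)$ by sub-$p$-DG $A$-modules $\beta(F^iC)$ whose successive quotients $\beta(C_i)$ are direct sums of finitely many copies of $\beta(k[G])$, possibly with a grading shift.

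The core step is to verify that $\beta(k[G])$, with $k[G]$ viewed as the trivial $p$-complex in degree $0$, has totally finite-dimensional homology. Using the factorizations $k[G] = \bigotimes_{i=1}^n k[y_i]/(y_i^p)$ and $A = \bigotimes_{i=1}^n k[x_i]$, one identifies $\beta(k[G])$ with a tensor product $\bigotimes_{i=1}^n B_i$ of $p$-complexes of $k$-vector spaces, where $B_i := k[y_i]/(y_i^p) \otimes_k k[x_i]$ carries the differential $\mu_i(y_i^a \otimes x_i^b) = y_i^{a+1} \otimes x_i^{b+1}$ and the right $A$-action by multiplication on $k[x_i]$. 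Each $B_i$ is isomorphic (up to regrading) to the Koszul $p$-complex $K^p_\bullet(x_i)$, and the map $h_i \colon B_i \to B_i$ sending $y_i^{p-1} \otimes x_i^b$ to $1 \otimes x_i^b$ and all other basis vectors to zero is a nullhomotopy of multiplication by $x_i^{p-1}$, exactly as in the proof of Proposition \ref{prop:NKoszulfinitelygen}. By Proposition \ref{prop:tensorpreserves}, tensoring $h_i$ with identities on the remaining factors $B_j$ yields a nullhomotopy of $x_i^{p-1}$ on all of $\beta(k[G])$. Consequently each $x_i^{p-1}$ acts as zero on ${_s}H_\bullet(\beta(k[G]))$, so this homology is finitely generated over the finite-dimensional quotient $A/(x_1^{p-1},\ldots,x_n^{p-1})$ of the Noetherian ring $A$, and hence is finite-dimensional over $k$.

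Finally, I would propagate finite-dimensionality up the filtration. The long exact sequences of Lemma \ref{lem:longexact} attached to the short exact sequences $0 \to \beta(F^{i-1}C) \to \beta(F^iC) \to \beta(C_i) \to 0$ show that whenever ${_s}H_\bullet(\beta(F^{i-1}C))$ and ${_s}H_\bullet(\beta(C_i))$ are finite-dimensional for every $1 \leq s \leq p-1$, so is ${_s}H_\bullet(\beta(F^iC))$. Starting from $\beta(F^{m-1}C) = 0$ and inducting up to $i = M$ gives the desired bound on $\beta(C) = \beta(F^MC)$. The main obstacle is the coupling term $\mu$ in the differential $d_{\beta(C)} = d_C \otimes 1 + \mu$, which intertwines the $k[G]$-action on $C$ with the $A$-action; the degree filtration on $C$ disentangles these roles so that each graded piece reduces to $\beta(k[G])$, where the Koszul tensor decomposition and Proposition \ref{prop:tensorpreserves} finish the argument.
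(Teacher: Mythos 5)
Your argument is correct and follows essentially the same route as the paper's: reduce to the base case of $k[G]$ concentrated in a single degree, identify $\beta(k[G])$ with the tensor $p$-Koszul complex, nullhomotope each $x_i^{p-1}$ via Proposition \ref{prop:tensorpreserves}, and climb back up using the long exact sequence of Lemma \ref{lem:longexact}. The only cosmetic differences are that you realize the induction via an explicit degree filtration where the paper says ``induction on the number of generators,'' and you re-derive the finiteness for the Koszul $p$-complex inline rather than citing Proposition \ref{prop:NKoszulfinitelygen}.
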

\begin{proof}
First, suppose that $C$ is given by $k[G]$ concentrated in degree $0$. Let $N=p$. Ignoring the grading, we can identify $\beta(C)=k[G]\otimes_{k} A$ with the $N$-complex $K^N_\bullet(x_1,\ldots,x_n)$ introduced in Definition \ref{def:koszulncomplex} for the variables $x_i\in A$, $1\leq i\leq n$. Indeed, this follows immediately using the identification \eqref{eq:groupring=truncpol} of the group ring with the truncated polynomial ring and denoting the generators in $K^N_\bullet(x_i)$ by
\[\ldots \rightarrow 0\rightarrow A y_i^0\stackrel{x_i}{\rightarrow} A y_i\stackrel{x_i}{\rightarrow}\ldots \stackrel{x_i}{\rightarrow} A y_i^{p-1}\rightarrow 0\rightarrow\ldots
.\] Thus in this case, the $k$-vector space ${_s}H_\bullet(\beta(C))$ is finite dimensional by Proposition \ref{prop:NKoszulfinitelygen}.

The general case follows by induction on the number of generators using the long exact sequence from Lemma \ref{lem:longexact}.
\end{proof}

The composite $\beta\iota$ induces an embedding on derived categories from perfect chain complexes to $p$-DG $A$-modules with totally finite dimensional homologies. We postpone the proof of this structural result to section \ref{sec:embedding} as it requires Theorem \ref{thm:compositionseries} from section \ref{sec:matrices}.

We establish our first main result. The case $k=\F_p$ is Theorem \ref{thm:firstmain} stated in the introduction. 
\begin{thm}\label{thm:firstmaintext} Let $G=(\Z/p)^n$. Let $k$ be a field of characteristic $p$ and let $A=k[x_1,\ldots,x_n]$ be the polynomial ring graded by $\deg(x_i)=-1$. Let $b_n$ be the minimum of
\[\sum_i \dim_{k} {_1}H_i(M\otimes_A k),
\]
where $M$ ranges over the finitely generated, free $p$-DG $A$-modules with $_sH_\bullet(M\otimes_A k)\neq 0$ and $\dim_{k} {_s}H_\bullet(M)<\infty$ for all $1\leq s \leq p-1$. Then
\[\sum_i \dim_{k} H_i(C)\geq b_n
\]
for all non-acyclic, perfect chain complexes $C$ over $k[G]$.
\end{thm}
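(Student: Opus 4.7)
My plan is to produce an explicit candidate $M$ for the minimum defining $b_n$, starting from a non-acyclic perfect chain complex $C$ over $k[G]$. The natural choice is $M = \beta(\iota C)$, built from the two functors already introduced in this section. The whole argument then reduces to verifying that $M$ is eligible for the minimum and that it attains the bound $\sum_i \dim_k H_i(C)$.

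First I would check that $M$ is free and finitely generated as a graded $A$-module. Since $C$ is perfect, $\iota C$ is a bounded $p$-complex of free finitely generated $k[G]$-modules, and by construction $\beta(\iota C) = \iota C \otimes_k A$ as a graded $k$-vector space, with $A$ acting by right multiplication on the second factor. Using the identification \eqref{eq:groupring=truncpol} of $k[G]$ as a $p^n$-dimensional $k$-algebra, the underlying graded $A$-module is free of finite rank. The finiteness condition $\dim_k {_s}H_\bullet(M)<\infty$ for every $1\leq s\leq p-1$ then falls out of Lemma \ref{lem:totallyfinite}, applied to $\iota C \in \pCh(k[G])$.

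The key step is to identify $M\otimes_A k$ as a $p$-complex. Inspecting the formula $d(c\otimes_k f) = d(c)\otimes_k f + \sum_i y_i c\otimes_k x_i f$ that defines the $p$-differential on $\beta$, the second sum lies in $\beta(\iota C)\cdot(x_1,\ldots,x_n)$, so reducing modulo the augmentation ideal of $A$ annihilates it. Thus $M\otimes_A k$ is canonically isomorphic to $\iota C$ as a $p$-complex of $k$-modules (with the $k[G]$-action forgotten). With this identification in hand, Lemma \ref{lem:samehomology} computes ${_s}H_\bullet(\iota C)$ entirely in terms of $H_\bullet(C)$: for every $s$ and every $i$ one sees a copy of $H_{2i-1}(C)$ in degree $pi-1$ and a copy of $H_{2i}(C)$ in degree $pi-1+s$, and zero elsewhere. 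Non-acyclicity of $C$ guarantees some $H_j(C)\neq 0$, hence ${_s}H_\bullet(M\otimes_A k)\neq 0$ for every $s$; taking $s=1$ yields $\sum_i \dim_k {_1}H_i(M\otimes_A k) = \sum_j \dim_k H_j(C)$.

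Assembling these pieces, $M = \beta(\iota C)$ is admissible in the minimum defining $b_n$, so $b_n\leq \sum_j \dim_k H_j(C)$, which is the claim. The one step with real content is the identification $M\otimes_A k \cong \iota C$, since it is the bridge converting the $k[G]$-side invariant $\sum_j \dim_k H_j(C)$ into a $p$-DG $A$-module quantity; everything else is bookkeeping on top of Lemma \ref{lem:totallyfinite}, Lemma \ref{lem:samehomology}, and the formula \eqref{eq:differentialbeta} from Section \ref{sec:functorbeta}. I do not anticipate any obstacle, given that the relevant machinery has already been developed.
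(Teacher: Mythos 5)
Your proposal is correct and follows essentially the same route as the paper: set $M=\beta(\iota C)$, invoke Lemma \ref{lem:totallyfinite} for the finiteness of $\dim_k {_s}H_\bullet(M)$, identify $M\otimes_A k\cong \iota C$ as $p$-complexes, and use Lemma \ref{lem:samehomology} to translate ${_1}H_\bullet(\iota C)$ back into $H_\bullet(C)$ and to see that all the ${_s}H_\bullet(M\otimes_A k)$ are nonzero. The only difference is cosmetic: you unpack the identification $M\otimes_A k\cong \iota C$ from the defining formula for $\beta$, which the paper asserts without elaboration.
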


\begin{proof}
Let $C$ be a non-acyclic, perfect chain complex over $k[G]$. Then $\iota C$ is a free and finitely generated $p$-complex over $k[G]$. Set $M=\beta(\iota C)$. By construction $M$ is free and finitely generated. We have $\dim_k {_s}H_\bullet(M) <\infty$ for all $1\leq s\leq p-1$ by Lemma \ref{lem:totallyfinite}. Note that $\beta(\iota C)\otimes_A k\cong \iota C$ as $p$-complexes. Thus
\[\dim_k {_1}H_\bullet(M\otimes_A k)=\dim_k {_1}H_\bullet(\iota C)=\dim_k H_\bullet(C),\]
where the last equality holds by Lemma \ref{lem:samehomology}. In particular ${_1}H(M\otimes_A k)\neq 0$ and hence ${_s}H(M\otimes_A k)\neq 0$  for all $1\leq s\leq p-1$ by Proposition \ref{prop:acyclic}. It follows that $\dim_k H_\bullet(C)=\dim_k {_1}H_\bullet(M\otimes_A k) \geq b_n$.
\end{proof}

In particular for $k=\F_p$, bounding $b_n\geq 2^n$ would establish Conjecture \ref{conjalg} on the total rank of the homology of perfect complexes over $\F_p[G]$. For $p=2$, Carlsson showed in \cite[Proof of Proposition II.9]{carlsson86} that Conjecture \ref{conjalg} implies $b_n\geq 2^n$ as well.

\begin{rem}
\label{rem:reductiontoclosed}
We can reduce to algebraically closed fields in the following sense. If $b_n$ is defined as in Theorem \ref{thm:firstmaintext} for $k$ and $b'_n$ is the analogous minimum for the algebraic closure $\overline{k}$, then $b_n\geq b'_n$. Indeed, let $M$ be a finitely generated, free $p$-DG $A$-module. Denote $A'=\overline{k}[x_1,\ldots,x_n]$. Then $M'=M\otimes_A A'$ is a finitely generated, free $p$-DG $A'$-module. Moreover $\dim_k {_s}H_\bullet(M)=\dim_{\overline{k}} {_s}H_\bullet(M')$ and
\[\dim_{k} {_s}H_\bullet(M\otimes_A k)=\dim_{\overline{k}} {_s}H_\bullet(M'\otimes_{A'} \overline{k})\]
for any $1\leq s \leq N-1$.

Alternatively, we can first replace the non-acyclic, perfect chain complex $C$ over $k[G]$ with $C'=C\otimes_{k[G]} \overline{k}[G]$ to obtain a perfect chain complex over $\overline{k}[G]$. Then $\dim_k H_\bullet(C)= \dim_{\overline{k}} H_\bullet(C')$ and thus $\dim_k H_\bullet(C)\geq b'_n$.
\end{rem}

\section{From $p$-DG modules over a polynomial ring to $p$-nilpotent matrices}\label{sec:matrices}
Let $p$ be a prime and let $k$ be a field of characteristic $p$. Let $A=k[x_1,\ldots, x_n]$ denote the polynomial ring in $n$ variables graded by $\deg(x_i)=-1$. If $M$ is a finitely generated, free $p$-DG $A$-module and we choose a basis, then the $p$-differential can be expressed by a $p$-nilpotent matrix $D=(f_{ij})$ with entries homogeneous polynomials. By $p$-nilpotent we mean that $D^p=0$. So for $p=2$ we  obtain square-zero matrices. In this case, Carlsson proved three results reducing Theorem \ref{thm:firstmaintext} to matrices of a particular form. The particular form of the matrices is expressed through conditions \ref{strictlyupper}, \ref{evalzero}, \ref{evalelse} in the following theorem.
\begin{thm}[Carlsson]\label{thm:p=2matrices}
Let $k$ be the algebraic closure of $\F_2$ and let $G=(\Z/2)^n$. Let $\ell_n$ be the minimum over all even integers $\ell>0$ for which there exist integers $c_1,\ldots,c_\ell$ and a square-zero $\ell\times \ell$-matrix $D=(f_{ij})$ with entries homogeneous polynomials $f_{ij}\in k[x_1,\ldots,x_n]$ of degree\footnote{Here the degree is taken with respect to the standard grading $\deg{x_i}=1.$} $c_i+1-c_j$ such that 
\begin{enumerate}[i)]
\item\label{strictlyupper}
$D$ is strictly upper triangular,
\item\label{evalzero}
$(f_{ij}(0))=0$, and
\item\label{evalelse}
the matrix $(f_{ij}(x))$ has rank $\ell/2$ for all $x\in k^n\setminus\{0\}$.
\end{enumerate}
Then $\dim_{\F_2} H_\bullet(C) \geq \ell_n$ for all non-acyclic, perfect chain complexes $C$ over $\F_2[G]$.
\end{thm}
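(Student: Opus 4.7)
The plan is to derive Theorem~\ref{thm:p=2matrices} as the $p=2$ instance of the general strategy of the paper: combine Theorem~\ref{thm:firstmaintext} with the three structural results of Section~\ref{sec:matrices} to pass from a perfect chain complex, through a $2$-DG module over $A$, to a matrix satisfying the three conditions \ref{strictlyupper}--\ref{evalelse}.

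First, I would reduce to the algebraically closed ground field $k=\overline{\F_2}$ via Remark~\ref{rem:reductiontoclosed}: replacing $C$ with $C'=C\otimes_{\F_2[G]} k[G]$ preserves both non-acyclicity and the total dimension of the homology. Setting $M=\beta(\iota C')$ gives a finitely generated free $2$-DG $A$-module; by Lemma~\ref{lem:totallyfinite} it satisfies $\dim_k {_1}H_\bullet(M)<\infty$, and the isomorphism $M\otimes_A k\cong \iota C'$ of $2$-complexes, combined with Lemma~\ref{lem:samehomology}, yields $\dim_k {_1}H_\bullet(M\otimes_A k)=\dim_k H_\bullet(C')=\dim_{\F_2} H_\bullet(C)$, and in particular $_1H_\bullet(M\otimes_A k)\neq 0$.

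I would then apply the three results from Section~\ref{sec:matrices} at $p=2$ to choose a homogeneous basis of $M$ exhibiting the required form for the matrix $D=(f_{ij})$ of its $2$-differential. Proposition~\ref{prop:minimalmodel} replaces $M$ with a chain homotopy equivalent minimal model in which every $f_{ij}$ vanishes at $0$, giving condition~\ref{evalzero}. Proposition~\ref{prop:totallyfinite} uses the finiteness of $_1H_\bullet(M)$ to force $\rank(f_{ij}(x))=\ell/2$ for every $x\in k^n\setminus\{0\}$, giving condition~\ref{evalelse} and in particular forcing $\ell$ to be even. Theorem~\ref{thm:compositionseries} then produces a filtration of $M$ that yields an ordered homogeneous basis putting $D$ into strictly upper triangular form, giving condition~\ref{strictlyupper}.

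Finally, since $f_{ij}(0)=0$ the differential on $M\otimes_A k$ is the zero map, so $\dim_k {_1}H_\bullet(M\otimes_A k)$ coincides with the rank $\ell$ of $M$ as a free $A$-module. Thus $\ell$ witnesses the defining property of $\ell_n$, so $\ell\geq\ell_n$, and chaining this with the dimension identity above yields $\dim_{\F_2} H_\bullet(C)=\ell\geq\ell_n$. The main obstacle is not this assembly but the proofs of the three structural results themselves, especially Theorem~\ref{thm:compositionseries}: its $p=2$ version is Carlsson's original triangularization argument, and the extension to arbitrary $p$ is where the bulk of the new technical work of Section~\ref{sec:matrices} concentrates.
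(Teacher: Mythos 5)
Your strategy is the same as the paper's: the paper does not prove Theorem \ref{thm:p=2matrices} directly (it is stated as Carlsson's), but its proof of Theorem \ref{thm:secondmaintext} specializes at $p=2$ to exactly the assembly you describe, via Remark \ref{rem:reductiontoclosed}, Lemma \ref{lem:totallyfinite}, Proposition \ref{prop:minimalmodel}, Proposition \ref{prop:totallyfinite}, and Theorem \ref{thm:compositionseries}. The dimension count (zero differential on $M\otimes_A k$, hence $\dim_k {_1}H_\bullet(M\otimes_A k)=\ell$) and the reduction to an even $\ell$ are also handled as the paper does.

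There is one real, though small, gap: you apply Proposition \ref{prop:minimalmodel} to arrange $f_{ij}(0)=0$, and \emph{then} replace $M$ by a chain homotopy equivalent module $\overline{M}$ via Theorem \ref{thm:compositionseries} to obtain the triangular form. Nothing you say guarantees that the new matrix $\overline{D}$ for $\overline{M}$ still vanishes at $0$; a priori, forming the composition series could destroy minimality. The paper addresses this explicitly: Remark \ref{rem:(p-1)-nilpotentlem} shows that the pullback step in Lemma \ref{lem:compseries} preserves $(p-1)$-nilpotency of the differential on $-\otimes_A k$, and Remark \ref{rem:(p-1)-nilpotent} propagates this through the induction of Theorem \ref{thm:compositionseries}. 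For $p=2$ this is precisely the statement that $\overline{D}(0)=0$. You should invoke those remarks (or prove the preservation directly) to close the gap. Relatedly, since Theorem \ref{thm:compositionseries} changes the module, it is cleaner to apply Proposition \ref{prop:totallyfinite} \emph{after} the composition series step (as the paper does) rather than before, so that condition \ref{evalelse} is verified for the module whose matrix you actually read off; as stated in your proof the rank claim is made for the pre-triangularization $M$ and then implicitly transferred.
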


Carlsson established Conjecture \ref{conjalg} for $p=2$ and $n=3$ by bounding $\ell_3\geq 2^3$ in Theorem \ref{thm:p=2matrices} above.

\begin{ex}\label{ex:examplep=2n=2}
Conjecture \ref{conjalg} for $p=2$ and $n=2$ follows easily from Theorem \ref{thm:p=2matrices}. Indeed, to bound $\ell_2\geq 2^2$, we have to exclude the existence of $2\times 2$-matrices $D=(f_{ij})$ satisfying the conditions above. Since $D$ is supposed to be strictly upper triangular, this matrix has only one potentially non-zero entry $f_{12}$. A non-constant polynomial in two variables over an infinite field has infinitely many zeros. Thus condition \ref{evalelse} implies that $f_{12}$ is constant and different from zero. This contradicts condition \ref{evalzero}.
\end{ex}
In this section, we will extend Theorem \ref{thm:p=2matrices} from $p=2$ to all primes. 
\subsection{Minimality}
The reason for condition \ref{evalzero} in Theorem \ref{thm:p=2matrices} is that any free, finitely generated DG $A$-module $M$ can be replaced by a chain homotopy equivalent one $\widetilde{M}$ which is \emph{minimal} in the sense that the differential of $\widetilde{M}\otimes_A k$ is zero. We extend this result, \cite[\S1 Proposition 7]{carlsson87}, to $p$-DG $A$-modules.

\begin{prop}
\label{prop:minimalmodel}
For any free, finitely generated $p$-DG $A$-module $M$, there exists a free, finitely generated $p$-DG $A$-module $\widetilde{M}$ and a homotopy equivalence $f\colon M\to \widetilde{M}$ such that the differential of $\widetilde{M}\otimes_A k$ is $(p-1)$-nilpotent.
\end{prop}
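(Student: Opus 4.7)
The strategy generalizes Carlsson's minimal model construction to the $p$-DG setting: we iteratively split off contractible $p$-DG submodules of $M$ corresponding to the fully contractible summands of the underlying $p$-complex of $M\otimes_A k$. Apply Proposition \ref{prop:structureNcomplexes} to $M\otimes_A k$, viewed as a $p$-complex over the field $k$, to decompose it into shifts of fully contractible blocks (those with $p-1$ identity arrows) and shifts of blocks with $i$ identity arrows, $0\leq i\leq p-2$. Since the differential on each non-contractible block is annihilated by $d^{p-1}$, the relation $d^{p-1}=0$ on $M\otimes_A k$ is equivalent to the absence of fully contractible summands; if this holds, take $\widetilde M=M$ and the proposition is established.

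Otherwise, fix a fully contractible summand and a generator $\bar v\in M\otimes_A k$ so that $\bar v, d\bar v,\ldots, d^{p-1}\bar v$ are linearly independent and span this summand. Lift $\bar v$ to a homogeneous element $v\in M$. Let $F$ be the free $p$-DG $A$-module on generators $w^0,\ldots,w^{p-1}$ with $w^i$ in degree $\deg(d^iv)$, equipped with differential $d(w^i)=w^{i+1}$ for $i<p-1$ and $d(w^{p-1})=0$. Define the $p$-DG module map $\iota\colon F\to M$ by $\iota(w^i)=d^iv$. By construction $\iota\otimes_A k$ is injective with image the chosen summand, so by Lemma \ref{lem:reflectiso} $\iota$ itself is a section of graded $A$-modules. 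Hence $M/F$ is a finitely generated, free graded $A$-module (a direct complement of $F$ in $M$) which inherits a $p$-DG structure from $M$. Moreover $F\otimes_A k$ is a fully contractible $p$-complex, so Lemma \ref{lem:contractible} implies that $F$ is contractible as a $p$-DG $A$-module.

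The short exact sequence $0\to F\to M\to M/F\to 0$ of $p$-DG modules then gives a distinguished triangle in the triangulated homotopy category of $\pDGMod$; since $F\simeq 0$, the quotient map $M\to M/F$ is a homotopy equivalence. Now $(M/F)\otimes_A k\cong (M\otimes_A k)/(F\otimes_A k)$ is precisely the complementary summand, so its decomposition contains one fewer fully contractible block than that of $M\otimes_A k$. Iterating the construction (the rank drops by $p$ at each step and thus the procedure terminates after finitely many iterations) produces the desired $\widetilde M$ with $d^{p-1}=0$ on $\widetilde M\otimes_A k$, together with a homotopy equivalence $f\colon M\to\widetilde M$ obtained by composing the quotient maps.

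The principal technical point is the splitting step: guaranteeing that the lifted map $\iota\colon F\to M$ is injective with free quotient, so that the process strictly reduces complexity. This is handled by Lemma \ref{lem:reflectiso} together with the linear independence of $\bar v, d\bar v,\ldots, d^{p-1}\bar v$ supplied by Proposition \ref{prop:structureNcomplexes}; the triangulated structure on the homotopy category of $\pDGMod$ then upgrades the graded splitting to the desired homotopy equivalence, and termination is automatic from finite generation.
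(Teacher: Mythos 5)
Your construction follows the paper's proof closely: decompose $M\otimes_A k$ via Proposition \ref{prop:structureNcomplexes}, lift a generator of a fully contractible block to $M$, quotient by the resulting $p$-DG submodule, and show the quotient map is a homotopy equivalence. Iterating one block at a time is fine, though the paper quotients by the $p$-DG submodule generated by all such lifted blocks simultaneously, which avoids the iteration. The one step that is under-justified is the assertion that the short exact sequence $0\to F\to M\to M/F\to 0$ of $p$-DG modules gives a distinguished triangle. Section \ref{sec:NDGmodules} only realizes distinguished triangles as mapping-cone triangles $M\to N\to\Cone(f)\to TM$; for your short exact sequence to yield one you would need to argue that $M/F$ agrees with $\Cone(F\hookrightarrow M)$ in the homotopy category, or invoke that graded-split short exact sequences are conflations in the underlying Frobenius exact structure --- neither of which is established in the paper. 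The result is true, but a gap remains as written. The most economical repair, and the route the paper takes, is to avoid the triangulated structure entirely and apply Corollary \ref{cor:reflectquasiiso}: since $F\otimes_A k$ is acyclic, the long exact sequence of Lemma \ref{lem:longexact} for $0\to F\otimes_A k\to M\otimes_A k\to (M/F)\otimes_A k\to 0$ shows that $f\otimes_A k$ is a quasi-isomorphism, whence $f$ is a homotopy equivalence. (Alternatively, one can observe that $F$ is a shift of $A\otimes_k H$, hence projective-injective in the hopfological Frobenius structure, so the graded-split sequence actually splits as $p$-DG modules; but this again appeals to facts beyond what the paper records.)
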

\begin{proof}
By Proposition \ref{prop:structureNcomplexes}, the $p$-complex $M\otimes_A k$ decomposes as 
\[M\otimes_A k \cong \bigoplus_\alpha kv_\alpha \oplus kd(v_\alpha)\oplus\ldots\oplus kd^{i_\alpha}(v_\alpha),
\]
for some homogeneous elements $v_\alpha$ of $M\otimes_A k$ with $i_\alpha=\max\{i; d^iv_\alpha\neq 0\}$.

Let $\overline{v}_\alpha\in M$ denote a lift of $v_\alpha$. Then $M$ is free on $\{\overline{v}_\alpha,d(\overline{v}_\alpha),\ldots,d^{i_\alpha}(\overline{v}_\alpha)\}_\alpha$ by Lemma \ref{lem:reflectiso}. The graded submodule of $M$ generated by all the elements $\{\overline{v}_\alpha,d(\overline{v}_\alpha),\ldots,d^{i_\alpha}(\overline{v}_\alpha)\}$ with $i_\alpha=p-1$ is a $p$-DG submodule. Let $\widetilde{M}$ denote the quotient of $M$ by this $p$-DG submodule and let  $f\colon M\to \widetilde{M}$ denote the quotient map. By construction $f\otimes_A k$ is a quasi-isomorphism and the differential of $\widetilde{M}\otimes_A k$ is $(p-1)$-nilpotent. We are left to show that $f$ is a homotopy equivalence. This follows immediately from Corollary \ref{cor:reflectquasiiso}.
\end{proof}

\subsection{Maximal rank}
For any $x\in k^n$, the image of the matrix $(f_{ij}(x))$ of Theorem \ref{thm:p=2matrices} is contained in its kernel since $(f_{ij})$ squares to zero. Thus the rank of $(f_{ij}(x))$ is at most $l/2$ by the rank-nullity theorem. Condition \ref{evalelse} states that $(f_{ij}(x))$ has maximal rank for $x\in k^n\setminus\{0\}$, or equivalently, that its image equals its kernel. This condition results from \cite[\S1 Proposition 8]{carlsson87}. We  extend \cite[\S1 Proposition 8]{carlsson87} to $p$-DG modules in Proposition \ref{prop:totallyfinite} below. There we  work with $N$-differential modules, i.e., modules $M$ together with an $N$-nilpotent endomorphism $d$ for some $N\geq 2$. For such a module $M$ and $1\leq s\leq N-1$ we write
\[_sH(M)=\ker d^s/\im d^{N-s}
.\] We say that $M$ is \emph{contractible} if $M$ is isomorphic to an $N$-differential module of the form $C\oplus\ldots \oplus C$ consisting of $N$ identical summands $C$ with differential
\[\begin{pmatrix}
0 & 1 & 0 &\ldots& 0 \\
 & 0 & 1 & & \\
 \vdots& & \ddots & \ddots & \\
  &&& 0 & 1 \\
 0 & &\ldots && 0
\end{pmatrix}.
\]

Before stating Proposition \ref{prop:totallyfinite}, we extend \cite[Remark 1.6]{avramovbuchweitziyengar} to $N$-complexes.
\begin{lem}
\label{lem:differentialmodulecontractible}
Let $R$ be a unitary ring and $P$ an $N$-differential $R$-module such that $_sH(P)=0$ for all $1\leq s\leq N-1$. If every $R$-module has finite projective dimension and $P$ is a projective $R$-module, then $P$ is contractible.
\end{lem}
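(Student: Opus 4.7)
The plan is to identify $P$ explicitly with $N$ copies of $Q := \ker d$ equipped with the nilpotent shift differential. This reduces to two sub-goals: showing that $Q$ is projective, and then building an explicit isomorphism of $N$-differential modules.

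For the projectivity of $Q$, I would exploit the acyclicity conditions $\ker d^s = \im d^{N-s}$ for all $1 \leq s \leq N-1$. Setting $K := \ker d^{N-1}$, and using that $\im d = K$ (from $_{N-1}H(P)=0$) together with $\im d^{N-1} = Q$ (from $_{1}H(P)=0$), one obtains two short exact sequences
\[0 \to K \to P \xrightarrow{d^{N-1}} Q \to 0 \quad\text{and}\quad 0 \to Q \to P \xrightarrow{d} K \to 0.\]
Each of $Q$ and $K$ is thus a first syzygy of the other relative to the projective module $P$. Since every $R$-module has finite projective dimension, the standard identity $\operatorname{pd}(Q) + 1 = \operatorname{pd}(K)$ (valid when $\operatorname{pd}(K) \geq 1$) combined with its symmetric counterpart forces both projective dimensions to vanish. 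Hence $Q$ is projective.

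With $Q$ projective, the surjection $d^{N-1} \colon P \to Q$ admits a section $s \colon Q \to P$ with $d^{N-1} s = \id_Q$. Define
\[\phi \colon Q^{\oplus N} \to P, \quad (q_1,\ldots,q_N) \mapsto \sum_{i=1}^N d^{N-i} s(q_i).\]
A direct check using $d^N=0$ shows that $\phi$ intertwines $d$ with the shift differential on $Q^{\oplus N}$. Surjectivity is established by descending through the filtration $0 \subset \ker d \subset \ker d^2 \subset \ldots \subset \ker d^N = P$: given $p \in P$, set $q_N = d^{N-1} p$, so that $p - s(q_N) \in \ker d^{N-1}$; then $q_{N-1} := d^{N-2}(p - s(q_N)) \in Q$, and iterating produces all the $q_i$ with $p = \phi(q_1,\ldots,q_N)$. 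Injectivity is the same argument in reverse: applying $d^{N-1}, d^{N-2}, \ldots$ in turn to $\phi(q_1,\ldots,q_N) = 0$ forces $q_N, q_{N-1}, \ldots, q_1$ to vanish successively.

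I expect the main obstacle to be the projectivity of $Q$, i.e., converting the qualitative assumption of finite projective dimension into genuine projectivity. The symmetric pair of short exact sequences above, a phenomenon particular to $N$-differential modules that are acyclic in every degree, is what makes the syzygy argument succeed; without such symmetry, finite projective dimension alone would not suffice. Once $Q$ is known to be projective, the construction of $\phi$ is essentially dictated by the filtration structure of an acyclic $N$-differential module.
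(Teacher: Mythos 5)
Your proof is correct and follows essentially the same route as the paper: first show $\ker d$ is projective via a syzygy argument that exploits finite projective dimension together with the two exact sequences supplied by acyclicity, then choose a splitting $s$ of $d^{N-1}\colon P\to\ker d$ and use it to identify $P$ with $N$ shifted copies of $\ker d$. The paper merely packages the projectivity step by splicing your two short exact sequences into a single exact sequence of arbitrary length, and presents the final identification as an internal direct-sum decomposition $\ker d^{i}=\ker d^{i-1}\oplus d^{N-i}s(\ker d)$ rather than via your explicit map $\phi$, but these are cosmetic variants of the same argument.
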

\begin{proof}
Note that $\ker d\subset P$ is projective since $\ker d$ has finite projective dimension by assumption and we can build an exact sequence
\[0\longrightarrow \ker d \longrightarrow P \stackrel{d}{\longrightarrow} P \stackrel{d^{N-1}}\longrightarrow \ldots \stackrel{d}{\longrightarrow} P \longrightarrow \im d^{N-1} \longrightarrow 0
\]
of arbitrarily high length. We choose a splitting $s\colon \ker d\to P$ of the short exact sequence
\[0\longrightarrow \ker d^{N-1} \longrightarrow P \stackrel{d^{N-1}}{\longrightarrow} \ker d \longrightarrow 0.
\]
For any $1\leq i\leq N$, the submodule $\ker d^i$ decomposes as the internal direct sum $\ker d^{i-1} \oplus d^{N-i}s(\ker d)$. 
Thus $P=\ker d^N$ is the internal direct sum
\[P=d^{N-1}s(\ker d) \oplus d^{N-2}s(\ker d) \oplus \ldots \oplus s(\ker d),
\]
and $d$ maps $d^is (\ker d)$ isomorphically onto $d^{i+1} s(\ker d)$ for $0\leq i \leq N-2$ as desired.
\end{proof}

\begin{prop}
\label{prop:totallyfinite}
Let $M$ be a finitely generated free $p$-DG $A$-module. Let $m\neq (x_1,\ldots,x_n)$ be a maximal ideal in the ungraded polynomial ring $A$. If $\dim_k {_s}H_\bullet(M)<\infty$ for all $1\leq s\leq N-1$, then
\[
_sH(M\otimes_A A/m)=0
\]
for all $s$.
\end{prop}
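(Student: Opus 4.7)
My plan is to prove the stronger statement that $M \otimes_A A/m$ is contractible as a $p$-differential $A/m$-module; the vanishing of all $_sH$ groups then follows at once. The argument breaks into three steps: localize $M$ at the maximal ideal $m$, observe that the homology of the localization vanishes, then invoke Lemma~\ref{lem:differentialmodulecontractible}.

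First, I would pass to the localization $A_m$ of the ungraded polynomial ring $A$ at the maximal ideal $m$ and set $M_m = M \otimes_A A_m$. This is a finitely generated free (hence projective) $A_m$-module carrying the induced $p$-differential. Because localization is exact, it commutes with the functors $_sH$, so $_sH(M_m) \cong {_sH_\bullet}(M) \otimes_A A_m$. Next, the hypothesis $\dim_k {_sH_\bullet}(M) < \infty$, together with the grading convention $\deg(x_i) = -1$, forces the action of each $x_i$ on the graded module $_sH_\bullet(M)$ to be nilpotent, so $_sH_\bullet(M)$ is annihilated by some power of the irrelevant ideal $(x_1, \ldots, x_n)$. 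Since $m$ and $(x_1, \ldots, x_n)$ are distinct maximal ideals of $A$, we have $(x_1, \ldots, x_n) \not\subseteq m$, hence some $x_i$ is a unit in $A_m$; this forces $_sH_\bullet(M)_m = 0$ and therefore $_sH(M_m) = 0$ for every $1 \leq s \leq p-1$.

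The next step is to apply Lemma~\ref{lem:differentialmodulecontractible} to $M_m$ viewed as an $N$-differential $A_m$-module with $N = p$. The module $M_m$ is projective, its $_sH$ groups all vanish by the previous step, and $A_m$ is a regular local ring, being a localization of a polynomial ring over a field at a maximal ideal; hence by Auslander--Buchsbaum--Serre every $A_m$-module has finite projective dimension. The lemma then produces an explicit direct-sum decomposition exhibiting $M_m$ as a contractible $p$-differential $A_m$-module.

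To conclude, base-changing along the surjection $A_m \twoheadrightarrow A_m/mA_m = A/m$ preserves the explicit contractible decomposition, so $M \otimes_A A/m \cong M_m \otimes_{A_m} A/m$ is contractible as a $p$-differential $A/m$-module, and therefore $_sH(M \otimes_A A/m) = 0$ for all $s$. The only nonroutine ingredient is verifying the hypothesis of Lemma~\ref{lem:differentialmodulecontractible}, which comes down to the finite global dimension of the regular local ring $A_m$; once this is in place the rest is bookkeeping around exactness of localization and the graded support of $_sH_\bullet(M)$.
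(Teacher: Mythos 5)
Your proof is correct and follows the paper's own argument essentially step for step: localize at $m$, kill the localized homology using nilpotence of the homogeneous action on the finite-dimensional graded module $_sH_\bullet(M)$, apply Lemma~\ref{lem:differentialmodulecontractible} over the regular local ring $A_m$ to get a contractible $p$-differential module, then base change to $A/m$. The only (cosmetic) difference is that you invert some $x_i\notin m$ directly rather than an arbitrary homogeneous $f\notin m$, which in fact sidesteps the paper's small slip in asserting that every nonzero homogeneous polynomial acts nilpotently (a nonzero scalar does not).
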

\begin{proof}
Let $A_m$ denote $A$ localized at the maximal ideal $m$. First, we will prove that $_sH(M\otimes_A A_m)=0$ for all $s$. Since localization is exact, we have $_sH(M\otimes_A A_m)\cong {_s}H(M)\otimes_A A_m$. Any non-zero, homogeneous polynomial $f\in A$ acts nilpotently on $_sH_\bullet(M)$ as $_sH_\bullet(M)$ is finite dimensional over $k$ by assumption. Thus for any such $f$, the localization of $_sH(M)$ with respect to the powers of $f$ is zero: $_sH(M)\otimes_A A_f=0$. Choose a homogeneous polynomial $f$ that does not belong to $m$. Then
\[_sH(M)\otimes_A A_m\cong {_sH}(M)\otimes_A A_m \otimes_A A_f.\]
We conclude that $_sH(M\otimes_A A_m)=0$.

The ring $A_m$ has finite global dimension. Thus we can apply Lemma \ref{lem:differentialmodulecontractible} to conclude that $M\otimes_A A_m$ is a contractible $N$-differential $A_m$-module. Since
\[M\otimes_A A_m\otimes _{A_m} A/m\cong M\otimes_A A/m,
\]
it follows that $M\otimes_A A/m$ is contractible as well and in particular that its homology groups $_sH(M\otimes_A A/m)$ are zero for all $s$.
\end{proof}

\subsection{Composition series}
The triangular shape, condition \ref{strictlyupper} in Theorem \ref{thm:p=2matrices}, comes from the existence of composition series \cite[\S(I) Proposition 11]{carlsson83}. A composition series of a DG module $M$ is defined as a finite filtration of $M$ in which each successive quotient is free and has trivial differential. We extend this notion to $p$-DG modules as follows.

\begin{defn}
Let $M$ be a $p$-DG $A$-module. A \emph{composition series of $M$ of length $l$} is a filtration
\[0=M^0\subset M^1\subset \ldots \subset M^l=M
\]
such that each successive quotient $M^i/M^{i-1}$ is a direct sum of shifts of $A$ or of shifts of $A v\oplus A dv\oplus \ldots \oplus A d^{p-2} v$.
\end{defn}
We do not include more general quotients $Av\oplus Adv\oplus \ldots \oplus A d^jv$ for $0<j<p-2$ as they do not appear in our application.

By \cite[\S(I) Proposition 11]{carlsson83} any finitely generated, free DG $A$-module $M$ is chain homotopy equivalent to one that admits a composition series. To establish this result, Carlsson needed to pull back homology classes from $H_\bullet(M\otimes_A k)$ to $H_\bullet(M)$. For that, he considered the spectral sequence coming from the filtration $\{I^iM\}_i$ of $M$ for the augmentation ideal $I=(x_1,\ldots,x_n)$. The first differential in this spectral sequence is expressed with operations
\[\theta_j\colon H_\bullet(M\otimes_A k)\to H_\bullet(M\otimes_A k),\quad 0\leq j\leq n.\]
In the case of interest, $M=\beta(C)$ for a perfect chain complex $C$, these operations recover the action of the group ring $k[G]$ on \[H_\bullet(\beta(C)\otimes_A k)\cong H_\bullet(C),\] 
in that $\theta_j$ is multiplication by the element $y_j$ of the group ring under the identification \eqref{eq:groupring=truncpol}.

For $p>2$, we do not know how to extend these operations to an arbitrary finitely generated, free $p$-DG $A$-module, but it will be enough to establish them for the $p$-DG $A$-modules of interest.

Let $I\subset A$ be the graded ideal $(x_1,\ldots,x_n)$. Let $M$ be a free, finitely generated $p$-DG $A$-module. Filter $M$ as
\[\ldots \rightarrow I^{i+1} M\rightarrow I^i M \rightarrow \ldots \rightarrow I M \rightarrow M.
\]
Note that
\[\frac{I^i }{I^{i+1}} \otimes_A\frac{ M}{IM} \cong \frac{I^iM}{I^{i+1}M}, \quad [f]\otimes_A [m] \mapsto [f m]
\]
is an isomorphism of $p$-DG $A$-modules. As $I$ annihilates both arguments in the tensor product above, the tensor product can be taken over the field $A/I$ instead and we obtain an isomorphism on homology
\begin{equation}\label{eq:isohomology}
\frac{I^i }{I^{i+1}} \otimes_A {_s}H_{\bullet+i}\left(\frac{ M}{IM}\right)\cong  {_s}H_\bullet\left(\frac{I^iM}{I^{i+1}M}\right).
\end{equation}
Fix $l\in \Z$ and $1\leq s \leq p-1$. The filtration of $M$ above yields a filtration of chain complexes

\[\xymatrix{&\ar[d] &\ar[d] & &\ar[d]\\
\ldots \ar[r]& (I^{i+1}M)_l\ar[r]\ar[d]^{d^s} &(I^iM)_l \ar[r]\ar[d]^{d^s}& \ldots \ar[r] &M_l\ar[d]\ar[d]^{d^s}\\
\ldots \ar[r]& (I^{i+1}M)_{l-s}\ar[r]\ar[d]^{d^{p-s}} &(I^iM)_{l-s} \ar[r]\ar[d]^{d^{p-s}}& \ldots \ar[r] &M_{l-s}\ar[d]^{d^{p-s}}\\
\ldots \ar[r]& (I^{i+1}M)_{l-p}\ar[r]\ar[d] &(I^iM)_{l-p} \ar[r]\ar[d]& \ldots \ar[r] &M_{l-p}\ar[d]\\
& & & & .}
\]
The corresponding spectral sequence has $E^1$-page
\[\xymatrix{
{_{p-s}}H_{l-s}(I^2M/I^3M) & {_s}H_l(IM/I^2M)\ar[l]_{d^1} & {_{p-s}}H_{l+p-s}(M/IM)\ar[l]_{d^1} \\
_{s}H_{l-p}(I^2M/I^3M) & {_{p-s}}H_{l-s}(IM/I^2M)\ar[l]_{d^1} & {_{s}}H_{l}(M/IM)\ar[l]_{d^1} \\
{_{p-s}}H_{l-p-s}(I^2M/I^3M) & {_{s}}H_{l-p}(IM/I^2M)\ar[l]_{d^1} & {_{p-s}}H_{l-s}(M/IM)\ar[l]_{d^1} .}
\]
For all integers $j$, there exists $i$ such that $(I^iM)_j=0$ since $M$ is finitely generated and bounded above. Thus this spectral sequence converges to the homology of
\[\ldots \longrightarrow M_l \stackrel{d^s}{\longrightarrow} M_{l-s} \stackrel{d^{p-s}}{\longrightarrow} M_{l-p} \longrightarrow\ldots .
\]

Note that the page $E^\infty$ at the spot of ${_s}H_l(M/IM)$ is isomorphic to the image of
\[{_s}H_l(M)\rightarrow {_s}H_l(M/IM).
\]
The quotient $M/IM$ is just $M\otimes_A k$. We will need to pull back homology classes along the map ${_s}H_l(M)\rightarrow {_s}H_l(M\otimes_A k)$. Therefore, we study the differential
\begin{align*} d^1\colon &{_s}H_l(M/IM)\rightarrow {_{p-s}}H_{l-s}(IM/I^2M)\\
& [m+IM]\mapsto [d^sm+I^2M].
\end{align*}

Under the identification \eqref{eq:isohomology}, this differential becomes a map
\[ d^1\colon {_s}H_l(M/IM)\rightarrow I/I^2\otimes_A{_{p-s}}H_{l-s+1}(M/IM).\]

The finitely generated, free $p$-DG modules $M$ we are interested in, will satisfy the following assumption.
\begin{assumption}\label{ass:1} The induced maps $i_*$ (see section \ref{sec:Ncomplexes}) are isomorphisms
\[{_1}H_l(M/IM) \cong {_2}H_l(M/IM)\cong \ldots \cong {_{p-1}}H_l(M/IM)
\]
whenever $l$ is congruent to $p-1$ mod $p$, and the induced maps $d_*$ (see section \ref{sec:Ncomplexes}) are isomorphisms
\[{_{p-1}}H_{l}(M/IM)\cong \ldots \cong {_2}H_{l-p+3}(M/IM) \cong {_1}H_{l-p+2}(M/IM)
\]
for $l$ congruent to $p-2$ mod $p$.
In addition, $k[G]$ acts on $_sH_l(M/IM)$ for all $1\leq s \leq p-1$ and all $l\in \Z$ such that $i_*$ and $d_*$ are equivariant, and
\[{_1}H_{l}(M/IM)\stackrel{d^1}{\rightarrow} I/I^2 \otimes_A {_{p-1}}H_{l}(M/IM) \cong I/I^2 \otimes_A {_1}H_{l}(M/IM)
\]
is $[m]\mapsto \sum_i x_i\otimes_A y_i[m]$ for $l$ congruent to $p-1$ mod $p$
and
\[{_{p-1}}H_{l}(M/IM)\stackrel{d^1}{\rightarrow} I/I^2\otimes_A {_1}H_{l-p+2}(M/IM)\cong I/I^2\otimes_A{_{p-1}}H_l(M/IM)
\]
is $[m]\mapsto \sum_i x_i\otimes_A (p-1)y_i[m]$ for $l$ congruent to $p-2$ mod $p$.
\end{assumption}

\begin{lem}\label{lem:assumption} Let $C$ be a perfect complex over $k[G]$. Then the $p$-DG $A$-module $\beta(\iota C)$ satisfies Assumption \ref{ass:1}.
\end{lem}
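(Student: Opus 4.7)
The plan is to unwind Assumption \ref{ass:1} for the specific module $M=\beta(\iota C)$, using the identification $M/IM = \beta(\iota C)\otimes_A k \cong \iota C$ as $p$-complexes of $k[G]$-modules. This isomorphism is immediate from the definition of $\beta$ since reducing $f\in A$ modulo $I=(x_1,\ldots,x_n)$ kills the correction term $\sum_i y_i c\otimes x_i f$ in the differential. With this identification in hand, the first two bullets of the assumption (the $i_*$-isomorphisms in degrees $l\equiv p-1\pmod p$ and the $d_*$-isomorphisms in degrees $l\equiv p-2\pmod p$) follow directly from Lemma \ref{lem:samehomology} applied to $C$.

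For the $k[G]$-module structure on the homology groups, I would observe that $\iota C$ is a $p$-complex in the category of $k[G]$-modules: the extra identity maps inserted in even degrees are trivially $k[G]$-linear, and the differential of $\iota C$ is inherited from $C$, which is $k[G]$-linear. Hence $\ker d^s$ and $\im d^{p-s}$ are $k[G]$-submodules of $(\iota C)_l$, so ${_s}H_l(\iota C)$ carries a natural $k[G]$-module structure. The inclusion $\ker d^s\hookrightarrow \ker d^{s+1}$ and the map $d\colon \ker d^s\to \ker d^{s-1}$ are $k[G]$-equivariant, so $i_*$ and $d_*$ are $k[G]$-module maps.

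The core step is computing the $E_1$-differential $d^1$ explicitly. Plugging $c\otimes 1$ into formula \eqref{eq:differentialbeta} and reducing modulo $I^2 M$ kills every summand with $j\geq 2$, leaving
\[d^s(c\otimes 1)\equiv d^s c\otimes 1 + s\sum_{i=1}^n y_i d^{s-1}c \otimes x_i \pmod{I^2 M}.\]
If $[c]\in {_s}H_l(\iota C)$ then $d^s c=0$, so under the isomorphism \eqref{eq:isohomology} the class $d^1[c\otimes 1+IM]=[d^s(c\otimes 1)+I^2M]$ corresponds to $s\sum_i x_i\otimes_A [y_i d^{s-1}c]\in I/I^2\otimes_A {_{p-s}}H_{l-s+1}(M/IM)$. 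For $s=1$ with $l\equiv p-1\pmod p$, we have $d^{s-1}c=c$ and the identification ${_{p-1}}H_l\cong {_1}H_l$ via $i_*^{p-2}$ takes $y_i[c]$ to $[y_i c]$, giving the desired formula $[m]\mapsto \sum_i x_i\otimes_A y_i[m]$. For $s=p-1$ with $l\equiv p-2\pmod p$, we have $d^{s-1}c=d^{p-2}c$, and the $k[G]$-equivariant identification $d_*^{p-2}\colon {_{p-1}}H_l\xrightarrow{\cong} {_1}H_{l-p+2}$ sends $y_i[c]$ to $[y_i d^{p-2}c]$, yielding $[m]\mapsto \sum_i x_i\otimes_A (p-1)y_i[m]$.

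The main obstacle is bookkeeping: making sure the various homology classes are tracked in the correct ${_s}H$-group and that the transport through $i_*^{p-2}$ or $d_*^{p-2}$ matches what the assumption requires on the target side. Once the explicit reduction of $d^s(c\otimes 1)$ modulo $I^2 M$ is in place, everything else is a direct substitution combined with $k[G]$-equivariance of $d$, which commutes the $y_i$ past the iterated differential.
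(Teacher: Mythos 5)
Your proposal is correct and follows essentially the same route as the paper's proof: identify $\beta(\iota C)\otimes_A k\cong\iota C$, invoke Lemma \ref{lem:samehomology} for the $i_*$, $d_*$ isomorphisms, note $k[G]$-equivariance, and read off the $E^1$-differential from equation \eqref{eq:differentialbeta} modulo $I^2M$. The paper's version leaves the reduction of $d^s(c\otimes 1)$ modulo $I^2M$ as a "straightforward calculation"; you have merely written it out, and your bookkeeping through the $i_*^{p-2}$ and $d_*^{p-2}$ identifications matches the target formulas in Assumption \ref{ass:1}.
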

\begin{proof}
As $\beta(\iota C)\otimes_A k\cong \iota C$, we find that $k[G]$ acts on $_sH_l(\beta(\iota C)\otimes_A k)$. Moreover $i_*$ and $d_*$ are equivariant isomorphisms in the desired degrees by Lemma \ref{lem:samehomology}. A straightforward calculation shows that
\[d^1\colon {_1}H_{l}(\iota C) \rightarrow I/I^2 \otimes_A {_1}H_{l}(\iota C)
\]
sends a homology class $[c]$ to $\sum_i x_i\otimes_A [y_i c]$ for $l$ congruent to $p-1$ mod $p$. Using \eqref{eq:differentialbeta}, one calculates that
\[d^1\colon {_{p-1}}H_{l}(\iota C)\rightarrow I/I^2\otimes_A{_{p-1}}H_l(\iota C)
\]
sends a homology class $[c]$ to $\sum_i x_i \otimes_A [(p-1)y_i c]$ for $l$ congruent to $p-2$ mod $p$.
\end{proof} 
  
The following result allows us to pull back homology classes.
\begin{lem}\label{lem:pullbackclasses}
Let $M$ be a free, finitely generated $p$-DG $A$-module satisfying Assumption \ref{ass:1}. Suppose $_sH_\bullet(M\otimes_A k)\neq 0$ for all $1\leq s\leq p-1$ and let $L$ be the largest dimension for which there exists an $s$ with ${_s}H_L(M\otimes_A k)\neq 0$. If $\xi\in {_s}H_L(M\otimes_A k)$ such that $y_i \xi=0$ for all $1\leq i\leq n$, then $\xi$ lies in the image of
\[{_s}H_L(M)\rightarrow {_s}H_L(M\otimes_A k).
\]
\end{lem}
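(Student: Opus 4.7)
The plan is to use the spectral sequence of the $I$-adic filtration $\{I^jM\}_j$ constructed just before the lemma, which converges to the homology of the total complex
\[\ldots \longrightarrow M_l \xrightarrow{d^s} M_{l-s} \xrightarrow{d^{p-s}} M_{l-p} \longrightarrow \ldots.\]
By standard abutment, $E^\infty_{0,L}$ is identified with the image of ${_s}H_L(M)\to{_s}H_L(M/IM)$, so it suffices to prove that $\xi$ is a permanent cycle, i.e.\ that $d^r(\xi)=0$ on every page $r\geq 1$.

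The hypothesis $_sH_\bullet(M/IM)\neq 0$ for every $s$, together with the $i_*$ and $d_*$ isomorphisms of Assumption~\ref{ass:1}, forces $L$ to be congruent to $p-1$ or $p-2$ modulo $p$ and allows $\xi$ to be transported via a $k[G]$-equivariant identification to a class in ${_1}H_L$ or in ${_{p-1}}H_L$ respectively. In either case, the explicit formula in Assumption~\ref{ass:1} presents $d^1(\xi)$ as a nonzero scalar multiple of $\sum_i x_i\otimes y_i\xi$, which vanishes by the hypothesis $y_i\xi=0$.

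For $r\geq 2$ I would argue inductively: a class surviving to $E^r$ is represented by a lift $\tilde\xi\in M_L$ with $d^s\tilde\xi\in(I^r M)_{L-s}$, and $d^r(\xi)$ is the class of $d^s\tilde\xi$ in $E^r_{r,\ast}\cong I^r/I^{r+1}\otimes{_{\ast}}H_\ast(M/IM)$. Alternating applications of the two $d^1$-formulas of Assumption~\ref{ass:1}, mediated by the $k[G]$-equivariant $i_*$ and $d_*$ identifications that move successively between ${_s}H$ and ${_{p-s}}H$ on each page, unfold $d^r(\xi)$ as a nonzero scalar multiple of $\sum_{i_1,\ldots,i_r}x_{i_1}\cdots x_{i_r}\otimes y_{i_1}\cdots y_{i_r}\xi$. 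Commutativity of $k[G]$ and the vanishing $y_i\xi=0$ annihilate every summand, completing the induction.

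The main obstacle is the inductive description of the higher $d^r$ for a $p$-complex spectral sequence. Assumption~\ref{ass:1} provides an explicit $d^1$-formula only in two specific congruence classes of the homological grading modulo $p$, while iterating $d^r$ forces one to pass through the alternating pair ${_s}H$/${_{p-s}}H$ at shifted degrees. Carefully matching the $i_*$ and $d_*$ identifications on each page, normalizing signs, and tracking the resulting scalar coefficients is the delicate part; the maximality of $L$ enters to guarantee that the lifts to $M/I^r M$ used in the induction exist without obstruction from higher-degree homology classes.
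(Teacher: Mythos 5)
Your setup — the $I$-adic spectral sequence, the identification of $E^\infty_{0,L}$ with the image of ${_s}H_L(M)\to{_s}H_L(M\otimes_A k)$, the reduction to showing $\xi$ is a permanent cycle, and the vanishing $d^1(\xi)=0$ via the explicit formulas of Assumption~\ref{ass:1} and $y_i\xi=0$ — matches the paper exactly. But your treatment of the higher differentials $d^r$ for $r\geq 2$ goes off in the wrong direction, and it is where your argument has a real gap. Higher differentials in a filtration spectral sequence do not ``unfold'' as iterates of $d^1$: $d^r$ is a genuinely new operation defined only on classes surviving to $E^r$, and the closed formula you conjecture, $d^r(\xi)$ proportional to $\sum_{i_1,\ldots,i_r} x_{i_1}\cdots x_{i_r}\otimes y_{i_1}\cdots y_{i_r}\xi$, is not something you can derive by ``alternating applications of the two $d^1$-formulas.'' Moreover, the $i_*$ and $d_*$ identifications you want to mediate with are only provided by Assumption~\ref{ass:1} in two residue classes of the degree modulo $p$, and for $r\geq 2$ the target degree of $d^r$ falls outside those classes, so the identifications you invoke are not even available. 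You recognize this is ``the delicate part'' but do not resolve it.

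The paper's proof sidesteps all of this. The crucial observation, which your proposal misses, is that for $r\geq 2$ the \emph{target} of $d^r$ is itself zero. Concretely, $d^r$ lands in $E^r$ at filtration $r$ and chain-complex level one below $\xi$, which via the identification~\eqref{eq:isohomology} is a subquotient of $I^r/I^{r+1}\otimes_A {_{p-s}}H_{L-s+r}(M\otimes_A k)$; since $L$ is the largest degree in which any ${_t}H$ is nonzero, these groups vanish (in the case $s=1$, because $L-1+r>L$ outright; in the case $s=p-1$, combined with the degree constraints implicit in Assumption~\ref{ass:1} on where ${_1}H$ can be nonzero). Once you see that, there is nothing left to compute: $d^r(\xi)$ for $r\geq 2$ is automatically zero, and you never need a formula for it. Your instinct that maximality of $L$ should come in is right, but it should be used to kill the target outright, not to guarantee lifts for an explicit-formula induction that does not exist.
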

\begin{proof}
From our assumptions on $i_*$ and $d_*$, it follows that $L$ is either congruent to $p-1$ mod $p$ and we can take $s=1$ or $L$ is congruent to $p-2$ mod $p$ and $s=p-1$. It is enough to show that $\xi$ is an infinite cycle in the spectral sequence above. In either case $d^1(\xi)=0$ by assumption and the higher differentials of $\xi$ vanish since the targets of these differentials are zero by the choice of $L$.
\end{proof}

We will use the lemma above and the following lemma to construct composition series.
\begin{lem}\label{lem:compseries}
Let
\[0\longrightarrow M \stackrel{i}{\longrightarrow} N \stackrel{p}{\longrightarrow} P \longrightarrow 0\]
be a short exact sequence of free, finitely generated $p$-DG $A$-modules. Suppose that $M$ is a sum of shifts of $A$ or a sum of shifts of $A v\oplus A dv\oplus \ldots \oplus A d^{p-2} v$, and that $P$ is chain equivalent to some finitely generated, free $p$-DG $A$-module that admits a composition series of length $l$. Then $N$ is chain equivalent to a finitely generated, free $p$-DG $A$-module $\overline{N}$ that admits a composition series of length $l+1$.
\end{lem}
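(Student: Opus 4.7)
The plan is to replace $N$ by a chain homotopy equivalent, free, finitely generated $p$-DG $A$-module $\overline{N}$ that fits into a short exact sequence $0 \to M \to \overline{N} \to P' \to 0$, where $P'$ denotes the free module chain equivalent to $P$ that admits a composition series $0 = P'_0 \subset P'_1 \subset \cdots \subset P'_l = P'$. A composition series of length $l+1$ for $\overline{N}$ then arises by declaring the first step to be $M$ and pulling back the filtration of $P'$ through the surjection $\overline{N} \to P'$.

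To construct $\overline{N}$, fix a chain homotopy equivalence $g \colon P' \to P$ and form the pullback in $\pDGMod$,
\[
\overline{N} = N \times_P P' = \{(n,p') \in N \oplus P' : p(n) = g(p')\},
\]
where $p \colon N \to P$ denotes the given surjection, endowed with the diagonal $A$-action and diagonal $p$-differential. By construction $\overline{N}$ sits in a short exact sequence $0 \to M \to \overline{N} \to P' \to 0$; since $P'$ is free as a graded $A$-module this sequence splits as graded modules, so $\overline{N} \cong M \oplus P'$ is free and finitely generated. To check that the canonical map $\overline{N} \to N$ is a chain homotopy equivalence, we tensor both short exact sequences with $k = A/I$ over $A$, preserving exactness since $P$ and $P'$ are flat. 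The resulting ladder of short exact sequences of $p$-complexes has left vertical map $\mathrm{id}_{M \otimes_A k}$ and right vertical map $g \otimes_A k$, and the latter remains a chain homotopy equivalence since the homotopy data for $g$ is $A$-linear and hence survives base change. The five-lemma applied to the long exact sequences of Lemma \ref{lem:longexact} then forces $\overline{N} \otimes_A k \to N \otimes_A k$ to be a quasi-isomorphism, and Corollary \ref{cor:reflectquasiiso} upgrades this to a chain homotopy equivalence $\overline{N} \to N$, using that both modules are free and zero in large positive degrees.

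With $q \colon \overline{N} \to P'$ the surjection, we set $\overline{N}^0 = 0$, $\overline{N}^1 = M \subset \overline{N}$, and $\overline{N}^{j+1} = q^{-1}(P'_j)$ for $1 \leq j \leq l$. The successive quotients $\overline{N}^1/\overline{N}^0 \cong M$ and $\overline{N}^{j+1}/\overline{N}^j \cong P'_j/P'_{j-1}$ for $j \geq 1$ are each of the form required by the definition of composition series, yielding the desired filtration of length $l+1$. The only nontrivial step is the quasi-isomorphism-to-homotopy-equivalence upgrade in the middle of the argument; apart from this invocation of Corollary \ref{cor:reflectquasiiso}, the proof is routine homological bookkeeping.
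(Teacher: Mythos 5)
Your proof is correct and follows essentially the same approach as the paper: form the pullback $\overline{N} = N \times_P P'$ along the chain equivalence $P' \to P$, note that $\overline{N}$ is free and finitely generated because the new sequence splits as graded $A$-modules, and obtain the composition series of length $l+1$ by taking $M$ as the first step and pulling back the filtration of $P'$ through $\overline{N} \to P'$. The paper states but does not justify that $\overline{N}\to N$ is a chain equivalence; your detour through $-\otimes_A k$, the five lemma for the long exact sequences of Lemma~\ref{lem:longexact}, and Corollary~\ref{cor:reflectquasiiso} is a correct way to fill that gap.
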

\begin{proof}
Let $\overline{P}\to P$ be a chain equivalence from a finitely generated, free $p$-DG $A$-module that admits a composition series
\[0=\overline{P}^0\subset \overline{P}^1\subset \ldots \subset \overline{P}^l=\overline{P}.
\]
Let $\overline{N}$ be the pullback
\[\xymatrix{\overline{N}\ar[r]\ar[d] & \overline{P}\ar[d] \\
N\ar[r]_p & P
.}
\]
We will write $N=M\tilde{\oplus}P$ to indicate that $N$ is just the direct sum of $M$ and $P$ as graded $A$-modules when forgetting the differential. Similarly, we write $\overline{N}=M\tilde{\oplus}\overline{P}$. By construction, we have a map of short exact sequences
\[\xymatrix{0\ar[r] & M\ar[d]^=\ar[r] & M\tilde{\oplus}\overline{P}\ar[r]\ar[d] &\overline{P}\ar[d]\ar[r] & 0\\
0\ar[r] & M\ar[r] &M\tilde{\oplus}P \ar[r] & P \ar[r] & 0.}
\]
Note that $\overline{N}$ is finitely generated, free and chain equivalent to $N$. Taking further pullbacks, we obtain maps of short exact sequences with middle row
\[0\rightarrow M\tilde{\oplus} \overline{P}^0\rightarrow M\tilde{\oplus} \overline{P}^1\rightarrow \ldots \rightarrow M\tilde{\oplus} \overline{P}^l.
\]
This is a compositions series of $\overline{N}$ of length $l+1$.
\end{proof}
\begin{rem}\label{rem:(p-1)-nilpotentlem}
Let $N$, $\overline{N}$ and $\overline{P}$ be as in the proof of Lemma \ref{lem:compseries}. If the differentials of $N\otimes_A k$ and of $\overline{P}\otimes_A k$ are $(p-1)$-nilpotent, then the differential of $\overline{N}\otimes_A k$ is $(p-1)$-nilpotent as well.
\end{rem}
The following theorem allows us to replace the $p$-DG module $\beta(\iota C)$ coming from a perfect chain complex $C$ over $k[G]$ with a chain equivalent one that admits a composition series. The length of the composition series will be the sum of the Loewy lengths of the homology modules of $C$. Recall that the \emph{Loewy length} $\lol_{k[G]} M$ of a $k[G]$-module $M$ is
\[\lol_{k[G]} M=\inf \{l\geq 0| J^l M=0\},
\]
where $J$ denotes the augmentation ideal $J=(y_1,\ldots,y_n)$ of $k[G]$.
\begin{thm}\label{thm:compositionseries} Let $M$ be a free, finitely generated $p$-DG $A$-module. If $M$ satisfies Assumption \ref{ass:1}, then $M$ is chain equivalent to a free, finitely generated $p$-DG $A$-module that admits a composition series of length
\[\sum_i \lol_{k[G]} {_1}H_i(M\otimes_A k).\]
\end{thm}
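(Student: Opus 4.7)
The plan is to induct on $N := \sum_i \lol_{k[G]} {_1}H_i(M \otimes_A k)$. The base case $N = 0$ gives $_1H_\bullet(M \otimes_A k) = 0$, so Lemma \ref{lem:contractible} (applied with $s = 1$) makes $M$ contractible, and the zero module trivially admits a composition series of length $0$.

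For the inductive step I would first locate the largest degree $L$ in which any $_sH_L(M \otimes_A k)$ is non-zero. By Assumption \ref{ass:1}, $L$ is congruent to either $p-1$ or $p-2$ mod $p$; pick $s = 1$ in the first case and $s = p-1$ in the second, so that $_sH_L(M \otimes_A k) \neq 0$. Since $k[G]$ is local, this $k[G]$-module has non-zero socle, so I can pick $\xi$ with $y_i \xi = 0$ for every $i$; Lemma \ref{lem:pullbackclasses} then lifts $\xi$ to a class in $_sH_L(M)$ represented by some $m \in \ker d^s \subset M_L$. From $m$ I build a $p$-DG submodule $N' \subset M$ of the shape required by the composition-series definition: $N' = Am$ when $s = 1$ (a $p$-DG submodule since $dm = 0$) and $N' = \bigoplus_{i=0}^{p-2} A d^i m$ when $s = p-1$ (a $p$-DG submodule since $d^{p-1}m = 0$). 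In the second case, the isomorphisms $d_*$ of Assumption \ref{ass:1} force each $d^i m$ to reduce to a non-zero class in $M \otimes_A k$, and since these reductions sit in distinct degrees they are linearly independent over $k$; Lemma \ref{lem:reflectiso} then produces a graded $A$-module splitting of $N' \hookrightarrow M$. The $s = 1$ case works the same way.

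Set $\bar M := M / N'$, a free finitely generated $p$-DG $A$-module. To close the induction I would verify (i) that $\bar M$ continues to satisfy Assumption \ref{ass:1}, and (ii) that $\sum_i \lol_{k[G]} {_1}H_i(\bar M \otimes_A k) = N - 1$. Both facts reduce to chasing the six-term exact sequence of Lemma \ref{lem:longexact} for $0 \to N' \otimes_A k \to M \otimes_A k \to \bar M \otimes_A k \to 0$, combined with the naturality of the spectral sequence that supplies the $d^1$ formula. A direct computation shows that the homology of $N' \otimes_A k$ consists of exactly one copy of $k$ in $_sH_{L - p + 1 + s}$ for each $1 \leq s \leq p - 1$ (concentrated in degree $L$ when $s = 1$), and these classes are precisely the ones that the $i_*$, $d_*$ isomorphisms of Assumption \ref{ass:1} identify with the chosen $\xi$. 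Consequently quotienting by $N'$ removes one socle line from each $_sH_\bullet(M \otimes_A k)$ consistently with the structural isomorphisms, leaving lower-degree homology modules and $k[G]$-actions untouched. The Loewy length sum drops by exactly one, so the inductive hypothesis gives a composition series of length $N - 1$ for a module chain equivalent to $\bar M$, and Lemma \ref{lem:compseries} promotes this to one of length $N$ for a module chain equivalent to $M$.

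The main obstacle I anticipate is step (i): verifying that $\bar M$ inherits Assumption \ref{ass:1}. One must confirm that the induced $k[G]$-actions on the various $_sH_\bullet(\bar M \otimes_A k)$ still intertwine through $i_*$ and $d_*$ and that $d^1$ retains its $y_i$-multiplication form. This is ultimately naturality, but the degree bookkeeping — especially in the $s = p-1$ case, where $N' \otimes_A k$ is supported in $p-1$ consecutive degrees and interacts with the homology of $\bar M$ through connecting maps of various degrees — is delicate, and ruling out unintended perturbations of lower homology requires care.
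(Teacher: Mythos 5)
There is a genuine gap in claim (ii) of your inductive step. Quotienting by the $p$-DG submodule $N'$ generated by a \emph{single} socle element $\xi$ does not reduce the Loewy length of $_sH_L(M\otimes_A k)$: if $\lambda = \lol_{k[G]}{_s}H_L(M\otimes_A k)$ and the bottom Loewy layer $J^{\lambda-1}{_s}H_L(M\otimes_A k)$ has dimension greater than one, then after killing one socle line the image of $J^{\lambda-1}$ is still nonzero and the Loewy length of the quotient is still $\lambda$. (Concretely, for $k[G]/J^2$ with $n=2$ one has $\lambda=2$ and a two-dimensional socle; modding out one socle element leaves Loewy length $2$.) Your induction therefore decreases the total dimension of $_1H_\bullet(M\otimes_A k)$ but not the sum of Loewy lengths, so iterating produces a composition series whose length is the total homology dimension rather than $\sum_i \lol_{k[G]}{_1}H_i(M\otimes_A k)$ — a strictly weaker conclusion than the theorem asserts.

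The paper's proof avoids this by working with the entire bottom Loewy layer: it chooses a basis $\zeta_1,\ldots,\zeta_r$ of $J^{\lambda-1}{_s}H_L(M\otimes_A k)$, lifts each $\zeta_i$ via Lemma \ref{lem:pullbackclasses} (which applies since $J\cdot J^{\lambda-1}=0$ so each $\zeta_i$ is killed by all $y_j$), and takes $\varphi\colon \bigoplus_{i=1}^r Av_i\to M$ (respectively $\bigoplus_i\bigoplus_{j=0}^{p-2}Ad^j(v_i)\to M$ in the $s=p-1$ case). After quotienting, $_tH_L(\overline{M}\otimes_A k)\cong {_t}H_L(M\otimes_A k)/J^{\lambda-1}{_t}H_L(M\otimes_A k)$, which has Loewy length exactly $\lambda-1$, and the quotient still contributes only one step to the composition series. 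The rest of your argument — choosing $L$ and $s$ via Assumption \ref{ass:1}, lifting with Lemma \ref{lem:pullbackclasses}, splitting off $N'$ via Lemma \ref{lem:reflectiso}, checking that $\overline{M}$ inherits Assumption \ref{ass:1}, and closing with Lemma \ref{lem:compseries} — matches the paper's proof and would go through once you replace the single socle element by a basis of $J^{\lambda-1}{_s}H_L(M\otimes_A k)$.
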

\begin{proof}
Denote $\mathcal{L}(M)=\sum_i \lol_{k[G]} {_1}H_i(M\otimes_A k)$. If $\mathcal{L}(M)=0$, then $M$ is chain equivalent to $0$. Suppose that $\mathcal{L}(M)=j+1$ and the assertion of Theorem \ref{thm:compositionseries} holds for all $M'$ with $\mathcal{L}(M')\leq j$. Let $L$ be the largest dimension for which there exists an $s$ with ${_s}H_L(M\otimes_A k)\neq 0$ and set $\lambda=\lol_{k[G]} {_s}H_L(M\otimes_A k)$. Let $\zeta_1,\ldots,\zeta_r$ be a basis for $J^{\lambda-1}{_s}H_L(M\otimes_A k)$, where $J=(y_1,\ldots,y_n)$. By Lemma \ref{lem:pullbackclasses}, there exist representatives $\overline{\zeta}_1,\ldots,\overline{\zeta}_r\in M_L$ of homology classes in ${_s}H_L(M)$ such that the class $[\overline{\zeta}_i]$ is sent to $\zeta_i\in {_s}H_L(M\otimes_A k)$. If $L$ is congruent to $p-1$ mod $p$, we can assume that $s=1$. Define the map of $p$-DG $A$-modules
\[\varphi\colon \bigoplus_{i=1}^r Av_i \to M, \quad v_i\mapsto \overline{\zeta}_i.\]
Since $\varphi\otimes_A k$ is injective, the map $\varphi$ is an injection on a direct summand as a map of graded $A$-modules. Let $\overline{M}=M/\im \varphi$. Then $_	tH_i(\overline{M}\otimes_A k)\cong {_t}H_i(M\otimes_A k)$ for $i\neq L$ and all $t$, and 
\[_tH_L(\overline{M}\otimes_A k)\cong {_t}H_L(M\otimes_A k)/J^{\lambda-1}{_t}H_L(M\otimes_A k)
\]
for all $t$. So $\mathcal{L}(\overline{M})=\mathcal{L}(M)-1=j$. Moreover $\overline{M}$ satisfies Assumption \ref{ass:1} as well and therefore $\overline M$ is chain equivalent to a $p$-DG $A$-module that admits a composition series. We conclude this case by applying Lemma \ref{lem:compseries}.

If $L$ is congruent to $p-2$ mod $p$ and thus $s=p-1$, define
\[\varphi\colon \bigoplus_{i=1}^r Av_i \oplus Ad(v_i)\oplus \ldots \oplus Ad^{p-2}(v_i)\to M, \quad v_i\mapsto \overline{\zeta}_i\]
and proceed analogously to the above.
\end{proof}

\begin{rem}\label{rem:(p-1)-nilpotent}
It follows from Remark \ref{rem:(p-1)-nilpotentlem} that if for $M$ in Theorem \ref{thm:compositionseries} the differential of $M\otimes_A k$ is $(p-1)$-nilpotent, then the differential of $\overline{M}\otimes_A k$ for the constructed chain homotopy equivalent $p$-DG $A$-module $\overline{M}$ admitting a composition series is $(p-1)$-nilpotent as well.
\end{rem}

We assemble the results of this section to establish our main result. That is Theorem \ref{thm:secondmain} from the introduction extending Theorem \ref{thm:p=2matrices} from $p=2$ to all primes.

\begin{thm}\label{thm:secondmaintext} Let $G=(\Z/p)^n$. Let $k$ be a field of characteristic $p$ with algebraic closure $\overline{k}$. Let $\ell_n$ be the minimum over all multiples $\ell>0$ of $p$ for which there exist integers $c_1,\ldots,c_\ell$ and a $p$-nilpotent $\ell\times \ell$-matrix $D=(f_{ij})$ with entries homogeneous polynomials $f_{ij}\in \overline{k}[x_1,\ldots,x_n]$ of degree\footnote{Here the degree is taken with respect to the standard grading $\deg{x_i}=1.$} $c_i+1-c_j$ such that 
\begin{enumerate}[i)]
\item\label{pstrictlyupper}
$D$ is strictly upper triangular,
\item\label{pevalzero}
$(f_{ij}(0))^{p-1}=0$, and
\item\label{pevalelse}
the matrix $(f_{ij}(x))$ has rank $(p-1)l/p $ for all $x\in \overline{k}^n\setminus\{0\}$.
\end{enumerate}
Then $\dim_{k} H_\bullet(C) \geq 2\ell_n/p$ for all non-acyclic, perfect chain complexes $C$ over $k[G]$.
\end{thm}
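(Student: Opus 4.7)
The plan is to take a non-acyclic perfect complex $C$ over $k[G]$, pass through the functor $\beta\iota$ of section \ref{sec:functorbeta} to a $p$-DG $A$-module, replace it by a homotopy equivalent minimal module admitting a composition series, and then read off a $p$-nilpotent matrix $D$ satisfying the three listed conditions. By Remark \ref{rem:reductiontoclosed} I may assume $k=\overline{k}$. Writing $h^{\mathrm{odd}}=\sum_{i\ \mathrm{odd}}\dim_k H_i(C)$ and $h^{\mathrm{even}}=\sum_{i\ \mathrm{even}}\dim_k H_i(C)$, I observe that replacing $C$ by the shifted complex $C[1]$ exchanges $h^{\mathrm{odd}}$ and $h^{\mathrm{even}}$ while preserving perfectness and the total dimension of homology. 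So after possibly shifting, I may assume $h^{\mathrm{odd}}\geq h^{\mathrm{even}}$; this parity trick has no analogue in the $p=2$ setting (where $\ell=\dim_k H_\bullet(C)$ automatically) and will be essential in the final numerical estimate.

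Next, set $M=\beta(\iota C)$. By Lemma \ref{lem:totallyfinite} this is a free, finitely generated $p$-DG $A$-module with $\dim_k{_s}H_\bullet(M)<\infty$ for all $s$, and by Lemma \ref{lem:assumption} it satisfies Assumption \ref{ass:1}. Apply Proposition \ref{prop:minimalmodel} to obtain a homotopy equivalent free, finitely generated $\widetilde{M}$ whose differential on $\widetilde{M}\otimes_A k$ is $(p-1)$-nilpotent, and then Theorem \ref{thm:compositionseries} to obtain a further homotopy equivalent $\overline{M}$ admitting a composition series. By Remark \ref{rem:(p-1)-nilpotent}, the differential of $\overline{M}\otimes_A k$ remains $(p-1)$-nilpotent.

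Choose a homogeneous $A$-basis $e_1,\ldots,e_\ell$ of $\overline{M}$ respecting the composition series filtration, with each atomic factor $Av\oplus Adv\oplus\ldots\oplus Ad^{p-2}v$ ordered as $d^{p-2}v,\ldots,dv,v$. Writing $d(e_j)=\sum_i f_{ij}e_i$ yields a $p$-nilpotent matrix $D=(f_{ij})$ whose entries are homogeneous polynomials of degree $c_i+1-c_j$ (in the standard grading $\deg x_i=1$, which negates the grading of section \ref{sec:chaincomplexestopDGmodules}). Strict upper triangularity is immediate from the ordering, and $(D(0))^{p-1}=0$ simply records the $(p-1)$-nilpotence of the differential on $\overline{M}\otimes_A k$. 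For condition (iii), Proposition \ref{prop:totallyfinite} shows that $\overline{M}\otimes_A A/m_x$ is acyclic as a $p$-complex for every nonzero closed point $x\in\overline{k}^n$, hence by Proposition \ref{prop:structureNcomplexes} it decomposes into copies of the contractible $p$-complex of length $p$. Consequently $\ell$ is a multiple of $p$ and the rank of $D(x)$ equals $(p-1)\ell/p$. Thus $\ell\geq \ell_n$.

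It remains to express $\ell$ in terms of the homology of $C$. Tracing through the inductive construction of Theorem \ref{thm:compositionseries}, each atomic type-$A$ factor is introduced to kill a class in some ${_1}H_L(\iota C)$ with $L\equiv p-1\pmod p$, while each atomic $(Av\oplus Adv\oplus\ldots\oplus Ad^{p-2}v)$-factor kills a class in some ${_{p-1}}H_L(\iota C)$ with $L\equiv p-2\pmod p$; summing the contributions $\dim_k J^{\mu-1}/J^\mu$ over all Loewy layers at each degree gives $\dim_k{_s}H_L(\iota C)$ atomic factors at degree $L$. By Lemma \ref{lem:samehomology} these count exactly the classes in $H_{\mathrm{odd}}(C)$ and $H_{\mathrm{even}}(C)$, so $\ell=h^{\mathrm{odd}}+(p-1)h^{\mathrm{even}}$. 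Combining with $h^{\mathrm{odd}}\geq h^{\mathrm{even}}$ yields
\[
\dim_k H_\bullet(C)=h^{\mathrm{odd}}+h^{\mathrm{even}}\geq \tfrac{2}{p}\bigl(h^{\mathrm{odd}}+(p-1)h^{\mathrm{even}}\bigr)=\tfrac{2\ell}{p}\geq\tfrac{2\ell_n}{p}.
\]
The main obstacle is this last bookkeeping: one must verify that the atomic factors of the composition series are correctly distributed across the degrees and Loewy layers and identified with the appropriate homology classes of $C$; the verification of (i)--(iii) and the size estimate then falls out of the results of sections \ref{sec:chaincomplexestopDGmodules} and \ref{sec:matrices}.
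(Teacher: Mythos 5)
Your proof is correct, and it follows the same architecture as the paper's (reduce to $k=\overline{k}$, form $M=\beta(\iota C)$, minimize via Proposition~\ref{prop:minimalmodel}, obtain a composition series via Theorem~\ref{thm:compositionseries} and Remark~\ref{rem:(p-1)-nilpotent}, read off the $p$-nilpotent matrix in a basis adapted to the composition series, and verify conditions (i)--(iii) using Proposition~\ref{prop:totallyfinite}). The divergence is in the final numerical estimate. The paper splits into cases on the Euler characteristic: if $\chi(C)=0$ then $h^{\mathrm{odd}}=h^{\mathrm{even}}$ and $\ell = p\cdot\dim_k H_\bullet(C)/2$ immediately; if $\chi(C)\neq 0$ the paper invokes $p^n\mid\chi(C)$ to get $\dim_k H_\bullet(C)\geq p^n$, and then must separately produce an a priori upper bound $\ell_n\leq 2^{n-1}p$ by exhibiting the free $(\Z/p)^n$-action on the $n$-torus as a concrete example. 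Your shift trick avoids this case split and the auxiliary example entirely: replacing $C$ by $C[1]$ exchanges $h^{\mathrm{odd}}$ and $h^{\mathrm{even}}$ while preserving perfectness, non-acyclicity, and $\dim_k H_\bullet$, so one may assume $h^{\mathrm{odd}}\geq h^{\mathrm{even}}$, and then $\ell = h^{\mathrm{odd}}+(p-1)h^{\mathrm{even}}$ gives $\dim_k H_\bullet(C)-\tfrac{2\ell}{p} = \tfrac{p-2}{p}(h^{\mathrm{odd}}-h^{\mathrm{even}})\geq 0$ directly. This is a genuine simplification. A further cosmetic difference: the paper extracts $\ell = (p-1)\sum_i\dim_k H_{2i}(C)+\sum_i\dim_k H_{2i+1}(C)$ from the Jordan normal form of $D(0)$ viewed as the $p$-differential of $\iota C$ with size-$p$ blocks stripped out, whereas you obtain the same identity by summing the contributions of the atomic factors across the Loewy layers in the inductive construction of Theorem~\ref{thm:compositionseries}; both computations agree, and yours makes more explicit exactly where the $1$ versus $(p-1)$ weights on odd versus even homology come from.
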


\begin{proof}
Let $C$ be a non-acyclic, perfect chain complex $C$ over $k[G]$. Replacing $C$ by $C\otimes_{k[G]} \overline{k}[G]$ as in Remark \ref{rem:reductiontoclosed}, we may assume that $k$ is already algebraically closed. Let $M=\beta(\iota C)$. As established in the proof of Theorem \ref{thm:firstmaintext}, the $p$-DG $A$-module $M$ is free and finitely generated with $\dim_{k} {_s}H_\bullet(M)<\infty$ for $1\leq s\leq p-1$ and 
\[\dim_k {_1}H_\bullet(M\otimes_A k) =\dim_k H_\bullet(C).\]

By Proposition \ref{prop:minimalmodel}, we may assume that the $p$-differential of $M\otimes_A k$ is $(p-1)$-nilpotent. Lemma \ref{lem:assumption} ensures that Theorem \ref{thm:compositionseries} applies to $M$. By Theorem \ref{thm:compositionseries} and Remark \ref{rem:(p-1)-nilpotent} we can thus assume additionally that $M$ admits a composition series. As a graded $A$-module, $M$ is a direct sum of the successive quotients of the composition series. Let $\{v_1,\ldots, v_\ell\}$ be a basis of $M$ consisting of bases for the successive quotients. For the quotients consisting of terms of the form $Av\oplus Adv\oplus \ldots \oplus Ad^{p-2} v$, we pick $\{d^{p-2}v,\ldots,dv,v\}$ to compose the basis. With respect to the so chosen basis $\{v_1,\ldots, v_\ell\}$, the differential of the $p$-DG $A$-module $M$ can be expressed by a strictly upper triangular $\ell\times \ell$-matrix $(f_{i,j})_{i,j}$ consisting of homogeneous polynomials $f_{i,j}$. 
Denoting $c_i=\deg(v_i)$, the degree of $f_{i,j}$ in the standard grading of $k[x_1,\ldots,x_n]$ is $c_i+1-c_j$. The assumption that the $p$-differential of $M\otimes_A k$ is $(p-1)$-nilpotent implies condition \ref{pevalzero} $(f_{ij}(0))^{p-1}=0$. By Proposition \ref{prop:totallyfinite}, the $p$-differential $k$-module represented by $(f_{ij}(x))$ has trivial homology for all $x\neq 0$. This property is equivalent to condition \ref{pevalelse} $\rank (f_{ij}(x))= (\ell/p)(p-1)$. In particular, $p$ divides $\ell$. 

To establish $\dim_k H_\bullet(C)\geq 2\ell_n/p$, we will relate $\dim_k H_\bullet(C)$ to $\ell$. Since $(f_{ij}(0))$ is $(p-1)$-nilpotent,
\[\dim_k H_\bullet(C)=\dim_k {_1}H_\bullet(M\otimes_A k)=\dim_k \ker (f_{ij}(0)).\]

The Jordan normal form of the nilpotent matrix $(f_{ij}(0))$ is obtained from the Jordan normal form of the $p$-differential of $\iota C$ by eliminating the Jordan blocks of size $p$. That is, it consists of $\sum_i \dim_k H_{2i}(C)$ Jordan blocks

\[\begin{pmatrix}
0 & 1 & 0 &\ldots& 0 \\
 & 0 & 1 & & \\
 \vdots& & \ddots & \ddots & \\
  &&& 0 & 1 \\
 0 & &\ldots && 0
\end{pmatrix}
\]
of size $(p-1)$ and of $\sum_i \dim_k H_{2i+1}(C)$ Jordan blocks $\begin{pmatrix}
0 
\end{pmatrix}$
of size $1$. In particular, $\ell= (p-1)(\sum_i \dim_k H_{2i}(C)) +\sum_i \dim_k H_{2i+1}(C)$. Let $\chi(C)$ denote the Euler characteristic of $C$. If $\chi(C)=0$, then $\ell=(\dim_k H_\bullet(C))p/2$ and thus $\dim_k H_\bullet(C)\geq 2\ell_n/p$ as desired.

Since $C$ is a perfect complex, the order of the group $G$ divides the Euler characteristic $\chi(C)$. Thus if $\chi(C)\neq 0$, then $\dim_k H_\bullet(C)\geq p^n$. We conclude by showing that $p^n\geq 2 \ell_n/p$. Considering $(\Z/p)^n$ as a subgroup of the $n$-torus $(S^1)^n$, the $n$-torus becomes a free $G$-CW complex. Its cellular chain complex has Euler characteristic zero and the total rank of its homology is $2^n$.  Therefore, the matrix $(f_{ij})$ associated as above to this chain complex has size $l=2^{n-1}p$. It follows that $p^n\geq 2^{n}\geq 2\ell_n/p$ as desired.
\end{proof}

As in Example \ref{ex:examplep=2n=2} for $p=2$, we immediately recover the known result that Conjecture \ref{conjalg} holds for $n\leq 2$.
\begin{cor}\label{cor:lowdimcases}
Let $G=(\Z/p)^n$ be an elementary abelian $p$-group of rank $n\leq 2$. Let $C$ be a perfect chain complex over $\F_p[G]$. If $C$ is not acyclic, then
\[\dim_{\F_p} H_\bullet(C)\geq 2^n.
\]
\end{cor}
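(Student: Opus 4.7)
The plan is to apply Theorem~\ref{thm:secondmaintext} and to show directly that $\ell_n \geq p\cdot 2^{n-1}$ for $n\leq 2$; this immediately gives $\dim_{\F_p} H_\bullet(C)\geq 2\ell_n/p \geq 2^n$. The cases $n=0$ (non-acyclicity forces $\dim H_\bullet(C)\geq 1$) and $n=1$ ($\ell_1$ is, by definition, a positive multiple of $p$, so $\ell_1\geq p$) are immediate. The substantive case is $n=2$, and the strategy mirrors Example~\ref{ex:examplep=2n=2} for $p=2$: rule out the value $\ell=p$ in the definition of $\ell_2$.

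So suppose for contradiction that for $n=2$ the value $\ell=p$ is achieved, i.e.\ that there is a strictly upper triangular, $p$-nilpotent $p\times p$ matrix $D=(f_{ij})$ with homogeneous entries in $\overline{\F}_p[x_1,x_2]$ satisfying conditions \ref{pevalzero} and \ref{pevalelse} of Theorem~\ref{thm:secondmaintext}. The key combinatorial observation is that for any strictly upper triangular matrix of size $p\times p$, the power $D^{p-1}$ has at most one non-zero entry: any strictly ascending chain of indices $1=i_0<i_1<\cdots<i_{p-1}=p$ must have all increments equal to $1$, so
\[(D^{p-1})_{1,p}=\prod_{i=1}^{p-1}f_{i,i+1},\]
while every other entry of $D^{p-1}$ vanishes identically.

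To finish, I would observe that for each $x\in\overline{\F}_p^2$ the matrix $D(x)$ is nilpotent of order $\leq p$, and such a $p\times p$ matrix has rank $p-1$ if and only if its Jordan form is a single block of size $p$, equivalently $D(x)^{p-1}\neq 0$. Thus condition \ref{pevalelse} translates to: the polynomial $\prod_i f_{i,i+1}$ has no zeros on $\overline{\F}_p^2\setminus\{0\}$, while condition \ref{pevalzero} says that this same product vanishes at the origin. Over the algebraically closed field $\overline{\F}_p$, any homogeneous polynomial in two variables of positive degree factors into linear forms and therefore has non-trivial zeros; hence $\prod_i f_{i,i+1}$ must be a non-zero constant, contradicting vanishing at $0$. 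This yields $\ell_2\geq 2p$ and the corollary. No serious obstacle is anticipated: once Theorem~\ref{thm:secondmaintext} is in hand, the only real input is the above elementary computation of $(D^{p-1})_{1,p}$ together with the fact that positive-degree homogeneous polynomials in two variables over an algebraically closed field have non-trivial zeros.
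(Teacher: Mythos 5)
Your proposal is correct and is essentially the paper's own argument: apply Theorem~\ref{thm:secondmaintext}, reduce to showing $\ell_n\geq 2^{n-1}p$, use that $\ell_n$ is a positive multiple of $p$ for $n\leq 1$, and for $n=2$ rule out $\ell=p$ by observing that $(D^{p-1})_{1,p}=\prod_{i=1}^{p-1}f_{i,i+1}$ is the only possibly nonzero entry of $D^{p-1}$, that condition \ref{pevalelse} forces it to be nonvanishing on $\overline{\F}_p^2\setminus\{0\}$ and hence constant, and that condition \ref{pevalzero} then gives a contradiction. The only cosmetic difference is that you phrase the final step via factoring a positive-degree homogeneous binary form into linear factors, while the paper cites the (equivalent) fact that a nonconstant polynomial in two variables over an infinite field has infinitely many zeros.
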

\begin{proof}
We apply Theorem \ref{thm:secondmaintext}. Since $\dim_{\F_p} H_\bullet(C)\geq 2\ell_n/p$, it suffices to show that $\ell_n\geq 2^{n-1}p$. By definition of $\ell_n$ in Theorem \ref{thm:secondmaintext}, we know that $\ell_n$ is a positive multiple of $p$. In particular for $n=1$, we have $\ell_1\geq p$ as desired. If $n=2$, we want to exclude matrices $D=(f_{ij})$ as in Theorem \ref{thm:secondmaintext} of size $\ell=p$. Since $D$ is supposed to be strictly upper triangular, the matrix $D^{p-1}$ has one potentially non-zero entry. Namely the entry in the top right corner
 \[(D^{p-1})_{1p}=f_{12}f_{23}\ldots f_{p-1,p}.\]

For any $x\neq 0$, condition \ref{pevalelse} implies that the Jordan type of the matrix $(f_{ij}(x))$ consists of one Jordan block of length $p$. Therefore $f_{12}(x)\ldots f_{p-1,p}(x)$ is non-zero for any $x\neq 0$. It follows that $f_{12}\ldots f_{p-1,p}$ is constant and different from zero. This contradicts condition \ref{pevalzero}.
\end{proof}

It will be interesting to see if Conjecture \ref{conjalg} can be established in the first open case, $n=3$ for odd primes, using Theorem \ref{thm:secondmaintext}.

\section{Embedding of derived categories}\label{sec:embedding}
Let $k$ be a field of characteristic $p>0$. Let $G=(\Z/p)^n$, $A=k[x_1,\ldots,x_n]$ with $\deg(x_i)=-1$. Let $\Perf(k[G])$ denote the category of perfect $k[G]$-chain complexes. If $p=2$, the composite $\beta\iota$ from section \ref{sec:functorbeta} induces an equivalence from the derived category $\D(\Perf(k[G]))$ to the derived category of finitely generated, free DG $A$-modules with finite dimensional total homology. For arbitrary $p$, we prove the postponed structural result that $\beta\iota$ induces an embedding. The proof uses a hopfological algebra result, \cite[Corollary~6.10]{qi} of Qi, which identifies the hom-set in the derived category from a $p$-DG module satisfying property (P) as introduced in \cite[Definition~6.3]{qi} with the hom-set in the homotopy category.

\begin{thm}\label{thm:embedding}
The composite $\beta\iota$ induces an embedding of derived categories
\[\beta\iota\colon \D(\Perf(k[G]))\to \D(\pDGMod).\]
\end{thm}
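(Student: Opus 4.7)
The plan is to check that $\beta\iota$ descends to derived categories, and then to establish fully faithfulness by combining the composition-series replacement with Qi's identification of derived Hom and homotopy Hom. Since $\Perf(k[G])$ consists of bounded complexes of frees, $\D(\Perf(k[G]))=\Ho(\Perf(k[G]))$. The functor $\iota$ visibly preserves chain homotopies (a chain homotopy $h$ induces the evident $p$-complex homotopy by placing $h$ and $0$ in the appropriate slots of each identity string), and $\beta$ preserves $p$-complex homotopies by Proposition \ref{prop:tensorpreserves}. Hence $\beta\iota$ sends chain homotopy equivalences to homotopy equivalences of $p$-DG $A$-modules and descends to a well-defined functor on derived categories.

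For fully faithfulness, fix $C, C' \in \Perf(k[G])$. By Lemma \ref{lem:assumption}, $\beta\iota C$ satisfies Assumption \ref{ass:1}, so Theorem \ref{thm:compositionseries} produces a homotopy-equivalent $p$-DG $A$-module $P$ admitting a composition series with successive quotients shifts of $A$ or of $Av\oplus\cdots\oplus Ad^{p-2}v$. The next step is to argue that this cell-like structure makes $P$ satisfy property (P) in the sense of \cite[Definition~6.3]{qi}: both types of building blocks are among Qi's basic cofibrant cells, and iterated extensions inherit property (P). Assuming this, \cite[Corollary~6.10]{qi} yields
\[\Hom_{\D(\pDGMod)}(\beta\iota C, \beta\iota C') \cong \Hom_{\Ho(\pDGMod)}(P, \beta\iota C') \cong \Hom_{\Ho(\pDGMod)}(\beta\iota C, \beta\iota C'),\]
and the problem reduces to showing that $\beta\iota$ induces a bijection on homotopy Hom.

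A standard triangulated devissage --- every perfect complex is a finite iterated cone of shifts of $k[G]$, and $\beta\iota$ is an exact functor on the respective homotopy categories --- further reduces this to the case $C = k[G][i]$ for $i\in\Z$. For such $C$, the module $\beta\iota(k[G])$ is explicit: its underlying graded $A$-module is $k[G]\otimes_k A$, and via the identification \eqref{eq:groupring=truncpol} it becomes the Koszul-type $p$-complex $K^p_\bullet(x_1,\ldots,x_n)$ from Definition \ref{def:koszulncomplex} (with appropriate regrading). The main obstacle is this final direct comparison: a map $\beta\iota(k[G][i]) \to \beta\iota C'$ of $p$-DG $A$-modules is determined by its restriction to the ``top'' generator, and one must verify that this data is precisely the data of a $k[G]$-linear chain map $k[G][i]\to C'$ modulo chain homotopy, using the explicit Koszul-type shape of the differential on $\beta\iota(k[G])$ together with the nullhomotopies of $(x_i)^{p-1}$ constructed in the proof of Proposition \ref{prop:NKoszulfinitelygen} to absorb the additional $A$-linear freedom into honest homotopies. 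Here the rigid ``identity string'' shape imposed by $\iota$ is essential: it forces the image data on the $A$-generators to reorganize into a genuine $k[G]$-equivariant chain map rather than something more general.
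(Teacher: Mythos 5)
The proposal overshoots the target and, at the crucial step, does not actually close. ``Embedding'' here means injective on objects and faithful; the paper never claims (and very likely does not have) fullness. Your plan instead aims at a bijection on Hom-sets, and after the devissage to $C=k[G][i]$ you arrive at what you yourself call ``the main obstacle'' --- a direct comparison between $p$-DG module maps out of $\beta\iota(k[G])$ and $k[G]$-linear chain maps --- which is left as a sketch with no argument. As written, this is not a proof of anything; and if one were to try to carry it out, the issue is precisely that the extra degrees of freedom in a $p$-DG map (the ``$A$-linear freedom'' you allude to) need not be absorbable into homotopies, which is why fullness is not claimed. The parts you do nail down (descent to derived categories, the composition-series replacement and Qi's \cite[Corollary~6.10]{qi} to identify derived and homotopy Hom on the target) match the paper, but they are set-up, not the core.

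What you are missing is the short faithfulness argument that the paper actually uses once the Hom-sets have been identified with homotopy Hom-sets: if $\beta\iota f$ is null-homotopic in $\pDGMod$, apply $-\otimes_A k$ to the null-homotopy; since $\beta\iota f\otimes_A k\cong\iota f$, the map $\iota f$ is null-homotopic as a map of $p$-complexes, and one checks by an explicit formula (reassembling the homotopy components along the identity strings inserted by $\iota$) that $\iota$ reflects null-homotopic maps, so $f$ is null-homotopic. Together with injectivity on objects, that is the whole content of the theorem. Your observation that $\iota$ and $\beta$ preserve homotopies (Proposition \ref{prop:tensorpreserves}) is in the right spirit but in the wrong direction: for faithfulness you need $\beta\iota$ to \emph{reflect} null-homotopies after the Qi identification, not merely preserve them.
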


\begin{proof}
The functor $\iota$ preserves quasi-isomorphisms by Lemma \ref{lem:samehomology}. The functor $\beta$ preserves quasi-isomorphisms $f$ between finitely generated $p$-complexes by Corollary \ref{cor:reflectquasiiso} as $\beta(f)\otimes_A k\cong f$. Thus the composite $\beta\iota\colon \Perf(k[G])\to \pDGMod$ preserves quasi-isomorphisms as well and therefore induces a functor on derived categories.

To show that the induced functor $\beta\iota$ is an embedding, we identify the hom-sets as follows. For perfect chain complexes $C$ and $D$ the set of morphisms $\D(\Perf(k[G]))(C,D)$ agrees with the set of morphisms in the homotopy category from $C$ to $D$. The analogous result holds for the hom-set between $\beta\iota C$ and $\beta\iota D$:
\[\D(\pDGMod)(\beta\iota C, \beta\iota D) \cong{}_{A_\partial}\underline{{\mathbf {Mod}}}(\beta\iota C, \beta\iota D)
\]
Indeed, the $p$-DG $A$-module $\beta\iota C$ admits a composition series by Theorem \ref{thm:compositionseries} and Lemma \ref{lem:assumption}. In particular $\beta\iota C$ satisfies property (P) from \cite[Definition~6.3]{qi}, and therefore the identification of hom-sets above holds by \cite[Corollary~6.10]{qi}. 

We show that $\beta\iota$ is faithful. Let $f\colon C\to D$ be a map in $\Perf(k[G])$. If $\beta\iota f$ is null-homotopic, then $\beta\iota f\otimes_A k$ is null-homotopic as well. Since $\beta\iota f\otimes_A k\cong \iota f$, it suffices to show that $\iota$ reflects null-homotopic maps. If $\{h_i\}_i$ is a null-homotopy for $\iota f$, then
\[
\begin{cases} & d(h_{pl+p-2}+\ldots h_{pl+1})+h_{pl}\colon C_{2l}\to C_{2l+1} \\
& h_{pl-1}\colon C_{2l-1}\to C_{2l}
\end{cases}
\]
for $l\in \Z$ is a null-homotopy for $f$.

By construction $\beta\iota$ is injective on objects. Thus $\beta\iota\colon \D(\Perf(k[G]))\to \D(\pDGMod)$ is an embedding.
\end{proof}
\providecommand{\bysame}{\leavevmode\hbox to3em{\hrulefill}\thinspace}
\providecommand{\MR}{\relax\ifhmode\unskip\space\fi MR }
\providecommand{\MRhref}[2]{%
  \href{http://www.ams.org/mathscinet-getitem?mr=#1}{#2}
}
\providecommand{\href}[2]{#2}

\vspace{20pt}
\scriptsize
\noindent
Jeremiah Heller\\
University of Illinois at Urbana-Champaign\\
Department of Mathematics\\
1409 W. Green Street, Urbana, IL 61801\\
\texttt{jbheller@illinois.edu}

\vspace{10pt}
\noindent
Marc Stephan\\
Lehrstuhl f\"ur Differentialgeometrie\\
Universit\"at Augsburg\\
86135 Augsburg\\
Germany\\
\texttt{marc.stephan@math.uni-augsburg.de}
\end{document}